\newtheorem{theorem}{Theorem}[section]
\newtheorem{proposition}[theorem]{Proposition}
\newtheorem{lemma}[theorem]{Lemma}
\newtheorem{claim}[theorem]{Claim}
\newtheorem*{claim*}{Claim}
\newtheorem{corollary}[theorem]{Corollary}
\newtheorem{Main Conjecture}[theorem]{Main Conjecture}
\newtheorem{conjecture}[theorem]{Conjecture}
\newtheorem{problem}[theorem]{Problem}
\theoremstyle{remark}
\newtheorem{example}[theorem]{Example}
\theoremstyle{plain}
\newcommand{\cellsize}{11}
\newlength{\cellsz} \setlength{\cellsz}{\cellsize\unitlength}
\newsavebox{\cell}
\sbox{\cell}{\begin{picture}(\cellsize,\cellsize)
\put(0,0){\line(1,0){\cellsize}}
\put(0,0){\line(0,1){\cellsize}}
\put(\cellsize,0){\line(0,1){\cellsize}}
\put(0,\cellsize){\line(1,0){\cellsize}}
\end{picture}}
\newcommand\cellify[1]{\def\thearg{#1}\def\nothing{}%
\ifx\thearg\nothing
\vrule width0pt height\cellsz depth0pt\else
\hbox to 0pt{\usebox{\cell} \hss}\fi%
\vbox to \cellsz{
\vss
\hbox to \cellsz{\hss$#1$\hss}
\vss}}
\newcommand\tableau[1]{\vtop{\let\\\cr
\baselineskip -16000pt \lineskiplimit 16000pt \lineskip 0pt
\ialign{&\cellify{##}\cr#1\crcr}}}
\newcommand{\gap}{\hspace{1in} \\ \vspace{-.2in}}
\newcommand{\excise}[1]{}
\title{Newell-Littlewood numbers}
\author{Shiliang Gao}
\author{Gidon Orelowitz}
\author{Alexander Yong}
\address{Dept.~of Mathematics, University of Illinois at Urbana-Champaign, Urbana, IL 61801}
\email{sgao23@illinois.edu, gidono2@illinois.edu, ayong@illinois.edu}
\date{May 18, 2020}
\begin{document}
\pagestyle{plain}
\maketitle

\vspace{-.2in}
\begin{abstract}
The \emph{Newell-Littlewood numbers} are defined in terms of their celebrated cousins,
the \emph{Littlewood-Richardson coefficients}. Both arise as tensor
product multiplicities for a classical Lie group. They are the structure coefficients of the K.~Koike-I.~Terada basis
of the ring of symmetric functions. Recent work of H.~Hahn studies them, motivated by R.~Langlands' \emph{beyond endoscopy}
proposal; we address her work with a simple
characterization of \emph{detection} of Weyl modules. This motivates further study of the combinatorics of the numbers.
We consider analogues of ideas of 
J.~De Loera-T.~McAllister,
H.~Derksen-J.~Weyman, S.~Fomin--W.~Fulton-C.-K.~Li--Y.-T.~Poon, W.~Fulton, R.~King-C.~Tollu-F.~Toumazet, 
M.~Kleber,
A.~Klyachko, A.~Knutson-T.~Tao, T.~Lam-A.~Postnikov-P.~Pylyavskyy,
K.~Mulmuley-H.~Narayanan-M.~Sohoni,
H.~Narayanan, A.~Okounkov, J.~Stembridge, and H.~Weyl.
 
\end{abstract}

\tableofcontents

\section{Introduction}

\subsection{Overview}
The \emph{Newell-Littlewood numbers} \cite{Newell, Littlewood} are defined as 
\begin{equation}
\label{eqn:Newell-Littlewood}
N_{\mu,\nu,\lambda}=\sum_{\alpha,\beta,\gamma} c_{\alpha,\beta}^{\mu}
c_{\alpha,\gamma}^{\nu}c_{\beta,\gamma}^{\lambda},
\end{equation}
where the indices are partitions 
in 
\[{\sf Par}_n=\{(\lambda_1, \lambda_2, \ldots, \lambda_n)\in {\mathbb Z}_{\geq 0}^n:
\lambda_1\geq \lambda_2\geq \ldots \geq \lambda_n\}.\]
Here, $c^{\mu}_{\alpha,\beta}$ is the  
\emph{Littlewood-Richardson coefficient}. The latter numbers are of interest in combinatorics, representation theory and algebraic geometry; see, \emph{e.g.}, the 
books \cite{Fulton.Harris, FultonYT, ECII}. We study $N_{\mu,\nu,\lambda}$ by  analogy with modern research on their better known constituents.

For an $n$-dimensional complex vector space $V$ over ${\mathbb C}$ and $\lambda\in {\sf Par}_n$, the \emph{Weyl module} (or \emph{Schur functor}) ${\mathbb S}_{\lambda}(V)$ is an irreducible ${\sf GL}(V)$-module (\cite[Lectures~6 and 15]{Fulton.Harris} is our reference).
The Littlewood-Richardson coefficient is the tensor product multiplicity
\begin{equation}
\label{eqn:GLexp}
{\mathbb S}_{\mu}(V) \otimes {\mathbb S}_{\nu}(V)\cong\bigoplus_{\lambda \in {\sf Par}_n}
{\mathbb S}_{\lambda}(V)^{\oplus c_{\mu,\nu}^{\lambda}}.
\end{equation}

The Newell-Littlewood numbers arise in a similar manner, where ${\sf GL}(V)$ is replaced
by one of the other classical Lie groups ${\sf G}$. That is, suppose $W$ 
is a complex vector space, with a nondegenerate symplectic or orthogonal form $\omega$,
where $\dim W = 2n+ \delta$ and $\delta\in \{0,1\}$. Fix a basis $\{\varepsilon_1,
\varepsilon_2,\ldots,\varepsilon_{2n+\delta}\}$ such that 
\[\omega(\varepsilon_k,
\varepsilon_{2n+1+\delta-k})=\pm \omega(\varepsilon_{2n+1+\delta-k},\varepsilon_k)=1, 
\text{\ \ if $1\leq i\leq n+\delta$}\] 
(other pairings are zero). Let ${\sf G}$
be the subgroup of ${\sf SL}(W)$ preserving $\omega$. Then ${\sf G}={\sf SO}_{2n+1}$
if $\dim W=2n+1$ and $\omega$ is orthogonal. It is ${\sf G}={\sf Sp}_{2n}$ if $\dim W=2n$
and $\omega$ is symplectic. Finally, ${\sf G}={\sf SO}_{2n}$ if $\dim W=2n$ and $\omega$
is once again orthogonal. These are, respectively, groups in the $B_n, C_n, D_n$ 
series of the Cartan-Killing classification.

If $\lambda\in {\sf Par}_n$, H.~Weyl's construction \cite{Weyl} (see also
\cite[Lectures~17 and 19]{Fulton.Harris})
gives
 a ${\sf G}$-module ${\mathbb S}_{[\lambda]}(W)$. 
In the \emph{stable range} $\ell(\mu)+\ell(\nu)\leq n$, 
\begin{equation}
\label{eqn:NLexpansion}
{\mathbb S}_{[\mu]}(W)\otimes  {\mathbb S}_{[\nu]}(W) \cong\bigoplus_{\lambda\in {\sf Par}_n}
{\mathbb S}_{[\lambda]}(W)^{\oplus N_{\mu,\nu,\lambda}};
\end{equation}
this is \cite[Corollary~2.5.3]{Koike}. ${\mathbb S}_{[\lambda]}(W)$ is an irreducible
${\sf G}$-module, except in type $D_n$, where irreducibility holds if $\lambda_n=0$
(otherwise it is the direct sum of two irreducible ${\sf G}$-modules).

For any semisimple connected complex algebraic group ${\sf G}$ there is
an irreducible ${\sf G}$-module $V_{\lambda}$ for each dominant weight $\lambda$.  Uniform-type combinatorial frameworks for tensor product multiplicities (subsuming $c_{\lambda,\mu}^{\nu}$ and $N_{\mu,\nu,\lambda}$) are central in combinatorial representation theory; see, \emph{e.g.}, the 
surveys \cite{Barcelo.Ram, Kumar2010} for details and references. To compare and contrast, 
$N_{\mu,\nu,\lambda}$ is \emph{itself} independent of the choice of ${\sf G}$ \cite[Theorem~2.3.4]{Koike}.

Our thesis is that, like the Littlewood-Richardson coefficients, the Newell-Littlewood numbers form a subfamily of  the general multiplicities whose combinatorics deserves separate study. Indeed, we reinforce the parallel with the Littlewood-Richardson coefficients by developing the topic from first principles and symmetric function basics. 

\subsection{Earlier work}\label{sec:earlierwork} Reading includes
K.~Koike-I.~Terada's \cite{Koike} which cites D.~E.~Littlewood's book \cite{Littlewoodbook} and
R.~C.~King's \cite{King70, King75}. In turn, \cite{King70, King75} reference the papers
of M.~J.~Newell \cite{Newell} and D.~E.~Littlewood \cite{Littlewood}.
The Schur function $s_{\lambda}$, an element of the ring
$\Lambda$ of symmetric functions, is the ``universal character'' of ${\mathbb S}_{\lambda}(V)$. By analogy, \cite[Section~2]{Koike} establishes universal characters 
of ${\mathbb S}_{[\lambda]}(W)$ for the other classical groups. 

In addition, \cite[Theorem~2.3.4]{Koike} shows that, in the stable range,  the tensor
product multiplicities coincide across the classical Lie groups (of types $B,C,D$). For definiteness, we discuss ${\sf Sp}$. It has a universal character basis 
$\{s_{[\lambda]}\}$ of $\Lambda$ 
such that
\begin{equation}
\label{eqn:polyver}
s_{[\mu]}s_{[\nu]}=\sum_{\lambda}N_{\mu,\nu,\lambda}s_{[\lambda]},
\end{equation}
where $\mu,\nu,\lambda$ are arbitrary partitions; we call this the \emph{Koike-Terada} basis.\footnote{\cite{Koike} defines another basis, for ${\sf SO}$. It  also has $N_{\mu,\nu,\lambda}$ as its structure coefficients \cite[Theorem~2.3.4 (3)]{Koike}. Hence, for our purposes, discussing
${\sf Sp}$ rather than the ${\sf SO}$ basis is merely a matter of choice.}
This basis specializes to the characters for fixed ${\sf Sp}_{2n}$, just as the specialization 
\begin{equation}
\label{eqn:thespecialization}
s_{\lambda}\mapsto s_{\lambda}(x_1,x_2,\ldots,x_n,0,0,\ldots)
\end{equation}
does for ${\sf GL}_n$. Their work discusses ``modification rules'' (cf.~\cite{King70, King75}) to \emph{non-positively} compute multiplicities outside the stable range. See \cite{Kwon} for recent work connecting the stable range
combinatorics to crystal models in combinatorial representation theory.

This paper does not focus on the Koike-Terada basis \emph{per se}.~It is devoted to the inner logic of the Newell-Littlewood numbers. We were inspired by  H.~Hahn's \cite{Hahn} which concerns the
case $\mu=\nu=\lambda$; we engage her work in Section~\ref{sec:Hahn}.

\subsection{Summary of results} 
Section~\ref{sec:prelim} collects elementary facts  about $N_{\mu,\nu,\lambda}$ (Lemma~\ref{prop:easy}).
We will need a Pieri-type rule (Proposition~\ref{thm:Pieri}).
 This appears as S.~Okada's \cite[Proposition~3.1]{Okada} 
 with a short derivation from (\ref{eqn:Newell-Littlewood}) (which we include for completeness); see also earlier work of A.~Berele \cite{Berele} and S.~Sundaram \cite{Sundaram}.

In Section~\ref{sec:product}, we derive our initial result:
\begin{itemize}
\item[(I)] Theorem~\ref{thm:interval} describes the ``shape'' of (\ref{eqn:polyver}). It characterizes the sizes of 
$\lambda$ that appear in (\ref{eqn:polyver}) and gives a comparison result for partitions of different sizes. This result suggests the Unimodality Conjecture~\ref{conj:log}.
\end{itemize}

Section~\ref{sec:Hahn} is about the original stimulus for our work. We address
a combinatorial question of H.~Hahn \cite{Hahn} (who was motivated
by R.~Langlands' \emph{beyond endoscopy} proposal \cite{Beyond} towards his 
functoriality conjecture \cite{Langlands}). More specifically, we prove
\begin{itemize}
\item[(II)] Theorem~\ref{thm:main}, which is equivalent to showing
\[N_{\lambda,\lambda,\lambda}>0\text{\  if and only if $|\lambda|\equiv 0 \ ({\mathrm{mod}}\  2)$.}\]
In \cite{Hahn}, ``$\Rightarrow$'' was proved (see~Lemma~\ref{prop:easy}(V)) and the ``$\Leftarrow$'' implication was established for three infinite families
of $\lambda$.
\end{itemize}

In Section~5, suggested by the simplicity of (II), we develop a broader framework by investigating ``polytopal'' aspects of (\ref{eqn:Newell-Littlewood}).
\begin{itemize}
\item[(III)] Theorem~\ref{thm:thepolytope} shows that $N_{\mu,\nu,\lambda}$ counts
the number of lattice points in a polytope ${\mathcal P}_{\mu,\nu,\lambda}$ that
we directly construct (avoiding use of \cite{BZ}). Its Corollary~\ref{prop:NLsemigroup} says that 
\[{\sf NL}_n:=\{(\mu,\nu,\lambda)\in {\sf Par}_n^3: N_{\mu,\nu,\lambda}>0\}\]
 is a semigroup. 
\item[(IV)] We state two logically equivalent saturation conjectures about ${\sf NL}_n$, \emph{i.e.},
Conjectures~\ref{theconj} and~\ref{theconj'}. We prove special cases (Corollary~\ref{cor:triplelambdas},
Theorem~\ref{thm:satspecial}, Corollary~\ref{n=2sat}). While saturation holds for the Littlewood-Richardson coefficients  \cite{KT}, it does \emph{not} hold for the general tensor product multiplicities (although it is conjectured for simply-laced types). The aforementioned results and conjectures provide a new view on this subject (compare, \emph{e.g.}, \cite{Kumar, Kiers} and the references therein).

\item[(V)] Among the Horn inequalities \cite{Horn} are the \emph{Weyl inequalities} 
\cite{Weyl1912}. Our ``extended Weyl inequalities'' hold whenever $N_{\mu,\nu,\lambda}>0$;  this is 
Theorem~\ref{thm:Horn}. Theorem~\ref{prop:r=2,def=k} is our justification of the nomenclature; it establishes that the (extended) Weyl inequalities are enough to characterize ${\sf NL}_2$. Our proof uses a generalizable strategy; we will return to this in a sequel.
\item[(VI)] We also discuss limits of the analogy with $c_{\mu,\nu}^{\lambda}$.  Theorem~\ref{prop:floorex} shows that R.~C.~King-C.~Tollu-F.~Toumazet's Littlewood-Richardson polynomial conjecture \cite{King} (proved by H.~Derksen-J.~Weyman \cite{Derksen.Weyman}) has no na\"ive Newell-Littlewood version.
\item[(VII)] Section~\ref{sec:complexity} sketches the computational complexity
implications of Theorem~\ref{thm:thepolytope}. 
\end{itemize}

The ``nonvanishing'' results of Section~\ref{sec:polytope} are related to Section~\ref{sec:multfree}, where we  prove:
\begin{itemize}
\item[(VIII)] Theorem~\ref{thm:mfrep}, which characterizes pairs $(\lambda,\mu)$ 
such that (\ref{eqn:polyver}) is multiplicity-free. This is an analogue of J.~R.~Stembridge's \cite[Theorem~3.1]{jrs} for Schur functions, with a similar, self-contained proof. 
\end{itemize}

Section~\ref{sec:final} gathers some miscellaneous items. This includes two open problems, and 
\begin{itemize}
\item[(IX)] Theorem~\ref{thm:meetjoin}, which generalizes results of
T.~Lam-A.~Postnikov-P.~Pylyavskyy \cite{AJM} that solved conjectures of
A.~Okounkov \cite{Okounkov} and of S.~Fomin-W.~Fulton-C.-K.~Li-Y.-T.~Poon~
\cite{FominPoon}.
\end{itemize}

The appendix gives a list of decompositions (\ref{eqn:polyver}) for the reader's convenience. 

\section{Preliminaries}\label{sec:prelim}

\subsection{The Littlewood-Richardson rule}\label{sec:LR}
Let ${\sf Par}$ be the set of all partitions (with parts of size $0$ being ignored).
Identify $\lambda\in {\sf Par}$ with Young diagrams of shape $\lambda$ (drawn in English convention). 
Let $\ell(\lambda)$ be the number of nonzero parts of $\lambda$ and let
$|\lambda|:=\sum_{i=1}^{\ell(\lambda)} \lambda_i$ 
be the \emph{size} of $\lambda$, that is, the number of boxes of $\lambda$.
If $\mu\subseteq\lambda$, the \emph{skew
shape} $\lambda/\mu$ is the set-theoretic difference of the diagrams when aligned by their northwest most box. 

 A \emph{semistandard filling} $T$ of $\lambda/\mu$ assigns positive integers to each box of $\lambda/\mu$ such that the rows are
weakly increasing from left to right, and the columns are strictly increasing from top
to bottom. The \emph{content} of $T$ is $(c_1,c_2,\ldots)$ where
$c_i=\#\{i\in T\}$. Let 
\[{\sf rowword}(T)=(w_1,w_2,\ldots,w_{|\lambda/\mu|})\]
be the right to left, top to bottom, row reading word of $T$. We say
${\sf rowword}(T)$ is \emph{ballot} if for each $i,k\geq 1$ we have
\[\#\{w_j=i:j\leq k\}\geq \#\{w_j=i+1:j\leq k\}.\]
$T$ is \emph{ballot} if ${\sf rowword}(T)$ is ballot. The \emph{Littlewood-Richardson coefficient} $c_{\mu,\nu}^{\lambda}$ is the number of ballot, 
semistandard tableaux of shape $\lambda/\mu$ and content $\nu$; we will call these \emph{LR tableaux}.

\begin{example}
If $\mu=(3,1), \nu=(4,2,1), \lambda=(5,4,2)$ then $c_{\mu,\nu}^{\lambda}=2$ because of
these two tableaux:
\[T_1=\tableau{\ & \ &\ &1&1\\ \ &1&2&2\\1 &3} \text{\ \ \ and \ \ \  }
T_2=\tableau{\ & \ &\ &1&1\\ \ &1&1&2\\2 &3}
\]
Here ${\sf rowword}(T_1)=(1,1,2,2,1,3,1)$ and ${\sf rowword}(T_2)=(1,1,2,1,1,3,2)$.\qed
\end{example}

The Littlewood-Richardson rule implies that $N_{\mu,\nu,\lambda}$ is well-defined for 
$\mu,\nu,\lambda\in {\sf Par}$.

\subsection{Facts about $N_{\mu,\nu,\lambda}$}
We gather some simple facts we will use; we make no claims of originality:
\begin{lemma}[Facts about the Newell-Littlewood numbers]\label{prop:easy}
\gap
\begin{itemize}
\item[(I)] $N_{\mu,\nu,\lambda}$ is invariant under any ${\mathfrak S}_3$-permutation of 
the indices $(\mu,\nu,\lambda)$.
\item[(II)] $N_{\mu,\nu, \lambda}=c_{\mu,\nu}^{\lambda}$ if $|\mu|+|\nu|=|\lambda|$.
\item[(III)] $N_{\mu,\nu, \lambda}=0$ unless $|\mu|,|\nu|,|\lambda|$ satisfy the triangle inequalities (possibly with equality), \emph{i.e.,} $|\mu|+|\nu|\geq |\lambda|$, 
$|\mu|+|\lambda|\geq |\nu|$, and $|\lambda|+|\nu|\geq |\mu|$.\footnote{In the case of
\emph{reduced Kronecker coefficients} ${\overline g}_{\mu,\nu}^{\lambda}$ these are
called \emph{Murnaghan's inequalities}.}
\item[(IV)] $N_{\mu,\nu, \lambda}=0$ if $|\nu\wedge \lambda|+|\mu\wedge\nu|<|\nu|$.\footnote{Recall $\nu\wedge\lambda$ is the partition whose $i$-th part  is $\min(\nu_i,\lambda_i)$.} 
\item[(V)] $N_{\mu,\nu, \lambda}=0$ unless $|\lambda|+|\mu|+|\nu|\equiv 0 \ (\!\!\!\mod 2)$.
\item[(VI)] $N_{\mu,\nu,\lambda}=N_{\mu',\nu',\lambda'}$ where $\mu'$ is the conjugate partition of $\mu$, \emph{etc.} 
\end{itemize}
\end{lemma}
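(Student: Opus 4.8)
The plan is to establish each of the six facts directly from the definition~\eqref{eqn:Newell-Littlewood} and standard properties of the Littlewood-Richardson coefficients, with most parts being short observations once the right symmetry is exploited.

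For (I), I would observe that $N_{\mu,\nu,\lambda}=\sum_{\alpha,\beta,\gamma}c^{\mu}_{\alpha,\beta}c^{\nu}_{\alpha,\gamma}c^{\lambda}_{\beta,\gamma}$ is visibly symmetric under swapping the roles of the three outer indices together with a matching relabeling of the summation variables: transposing $\mu\leftrightarrow\nu$ corresponds to $\beta\leftrightarrow\gamma$, and so on, using the symmetry $c^{\mu}_{\alpha,\beta}=c^{\mu}_{\beta,\alpha}$ of the LR coefficient. Since the transpositions $(\mu\,\nu)$ and $(\nu\,\lambda)$ generate ${\mathfrak S}_3$, this gives full ${\mathfrak S}_3$-invariance. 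For (II), when $|\mu|+|\nu|=|\lambda|$ the constraint $c^{\lambda}_{\beta,\gamma}\neq 0$ forces $|\beta|+|\gamma|=|\lambda|=|\mu|+|\nu|$; combined with $|\alpha|+|\beta|=|\mu|$ and $|\alpha|+|\gamma|=|\nu|$ (from the other two coefficients being nonzero) one gets $2|\alpha|=0$, so $\alpha=\varnothing$, and then $c^{\mu}_{\varnothing,\beta}=\delta_{\mu,\beta}$, $c^{\nu}_{\varnothing,\gamma}=\delta_{\nu,\gamma}$ collapse the sum to the single term $c^{\lambda}_{\mu,\nu}$. Part (III) is then immediate: a nonzero summand needs $c^{\mu}_{\alpha,\beta},c^{\nu}_{\alpha,\gamma},c^{\lambda}_{\beta,\gamma}$ all positive, hence $|\mu|=|\alpha|+|\beta|$, $|\nu|=|\alpha|+|\gamma|$, $|\lambda|=|\beta|+|\gamma|$; adding any two and comparing to the third yields the three triangle inequalities, and summing all three shows $|\mu|+|\nu|+|\lambda|=2(|\alpha|+|\beta|+|\gamma|)$ is even, which is exactly (V).

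For (IV), I would argue that in any nonzero summand $c^{\nu}_{\alpha,\gamma}>0$ forces $\alpha\subseteq\nu$ and $\gamma\subseteq\nu$, so in particular $\alpha\subseteq\mu\wedge\nu$ (since also $\alpha\subseteq\mu$) and $\gamma\subseteq\nu\wedge\lambda$ (since also $\gamma\subseteq\lambda$); hence $|\nu|=|\alpha|+|\gamma|\le|\mu\wedge\nu|+|\nu\wedge\lambda|$, contrapositive of which is the claim. Finally (VI) follows from the conjugation symmetry $c^{\mu}_{\alpha,\beta}=c^{\mu'}_{\alpha',\beta'}$ of the LR coefficients applied termwise, noting that $(\alpha,\beta,\gamma)\mapsto(\alpha',\beta',\gamma')$ is a bijection of the index set.

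None of this is a serious obstacle; the only point requiring a little care is making the index substitutions in (I) and (IV) precise (e.g.\ checking that $\alpha\subseteq\mu$ whenever $c^{\mu}_{\alpha,\beta}>0$, which holds because an LR tableau of shape $\mu/\alpha$ can only exist when $\alpha\subseteq\mu$). I would present (II)--(V) as a single linked computation about box counts, and dispatch (I) and (VI) as symmetry remarks, citing the relevant basic properties of $c^{\lambda}_{\mu,\nu}$ from \cite{FultonYT}.
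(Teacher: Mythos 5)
Your proposal is correct and follows essentially the same route as the paper: extract the size equations $|\mu|=|\alpha|+|\beta|$, $|\nu|=|\alpha|+|\gamma|$, $|\lambda|=|\beta|+|\gamma|$ from a nonzero summand to get (II), (III), (V), use the containments $\alpha\subseteq\mu\wedge\nu$, $\gamma\subseteq\nu\wedge\lambda$ for (IV), and dispatch (I) and (VI) by the symmetry and conjugation invariance of the Littlewood-Richardson coefficients. No gaps.
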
 
\begin{proof}
(I) is immediate from (\ref{eqn:Newell-Littlewood}).

By (\ref{eqn:Newell-Littlewood}), $N_{\mu,\nu,\lambda}=0$ unless there exist $\alpha,\beta,\gamma\in {\sf Par}$ such that $c_{\alpha,\beta}^{\mu},c_{\alpha,\gamma}^{\nu}, c_{\beta,\gamma}^{\lambda}>0$. Henceforth we will call $\alpha,\beta,\gamma$ a \emph{witness} for $N_{\mu,\nu,\lambda}>0$. These Littlewood-Richardson
coefficients are zero unless 
\[|\alpha|+|\beta|=|\mu|, |\alpha|+|\gamma|=|\nu|,
|\beta|+|\gamma|=|\lambda| \text{\ (respectively).}\] 
Therefore 
\begin{equation}
\label{eqn:parity}
2|\alpha|+|\lambda|=|\mu|+|\nu|,
\end{equation}
which implies
$|\lambda|\leq |\mu|+|\nu|$. Now apply (I) to get (III). If $|\lambda|=|\mu|+|\nu|$ then (\ref{eqn:parity}) implies the only witness is $\alpha=\emptyset,\beta=\mu,\gamma=\nu$, hence $N_{\mu,\nu,\lambda}=c_{\mu,\nu}^{\lambda}$, as asserted by (II). 

For (IV), any such $\gamma$ satisfies
$\gamma\subseteq \nu,\lambda$. Hence $|\gamma|\leq |\nu\wedge\lambda|$. Similarly,
$|\alpha|\leq |\mu\wedge\nu|$. Now combine these inequalities with the fact that
$|\alpha|+|\gamma|=|\nu|$.

(V) holds by (\ref{eqn:parity}).

Finally, (VI) holds by the standard fact $c_{\alpha,\beta}^{\mu}=c_{\alpha',\beta'}^{\mu'}, c_{\alpha,\gamma}^{\nu}=c_{\alpha',\gamma'}^{\nu'}$ and $c_{\beta,\gamma}^{\lambda}=c_{\beta',\gamma'}^{\lambda'}$.
\end{proof}

\subsection{Symmetric functions}
Let $\Lambda$ be the ring of symmetric functions in $x_1,x_2,\ldots$.
Define the (skew) \emph{Schur function}
\[s_{\mu/\lambda}(x_1,x_2,\ldots) :=\sum_{T} x^T,\]
where the sum is over semistandard Young tableaux of skew shape $\mu/\lambda$.

It is true that $s_{\mu/\lambda}\in \Lambda$. Moreover, the $\{s_{\lambda}:\lambda\in {\sf Par}, |\lambda|=N\}$
 is a basis of $\Lambda^{(N)}$, the degree $N$ homogeneous component of 
$\Lambda=\bigoplus_N \Lambda^{(N)}$. 
In fact,
\begin{equation}
\label{eqn:skew}
s_{\lambda/\mu}=\sum_{\nu} c_{\mu,\nu}^{\lambda} s_{\nu},
\end{equation}
and
\begin{equation}
\label{eqn:prodabc}
s_{\mu} s_{\nu}=\sum_{\lambda} c_{\mu,\nu}^{\lambda} s_{\lambda}.
\end{equation}
There is an inner product $\langle \cdot,\cdot\rangle:\Lambda\times \Lambda\to {\mathbb Q}$ such that $\langle s_{\lambda}, s_{\mu}\rangle =\delta_{\lambda,\mu}$; see \cite[Chapter~7]{ECII}.

We will make use of the following \emph{asymmetric} formula for $N_{\mu,\nu,\lambda}$:

\begin{proposition}
\label{prop:reformulate}
$N_{\mu,\nu,\lambda}=\sum_{\alpha} \langle s_{\mu/\alpha}s_{\nu/\alpha}, s_{\lambda}\rangle$, where the sum is over $\alpha\subseteq \mu \wedge \nu$. 
\end{proposition}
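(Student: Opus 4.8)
The plan is to start from the definition (\ref{eqn:Newell-Littlewood}) and reorganize the triple sum so that two of the three Littlewood--Richardson coefficients get packaged into a single skew-Schur inner product. Recall that $N_{\mu,\nu,\lambda}=\sum_{\alpha,\beta,\gamma} c_{\alpha,\beta}^{\mu}c_{\alpha,\gamma}^{\nu}c_{\beta,\gamma}^{\lambda}$. The idea is to fix $\alpha$ and sum over $\beta$ and $\gamma$ first. Using (\ref{eqn:skew}), $\sum_{\beta} c_{\alpha,\beta}^{\mu} s_{\beta}=s_{\mu/\alpha}$ and likewise $\sum_{\gamma} c_{\alpha,\gamma}^{\nu} s_{\gamma}=s_{\nu/\alpha}$. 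The remaining factor $c_{\beta,\gamma}^{\lambda}$ is, by (\ref{eqn:prodabc}), precisely $\langle s_{\beta}s_{\gamma}, s_{\lambda}\rangle$. Therefore, for fixed $\alpha$,
\[
\sum_{\beta,\gamma} c_{\alpha,\beta}^{\mu}c_{\alpha,\gamma}^{\nu}c_{\beta,\gamma}^{\lambda}
= \Bigl\langle \Bigl(\sum_{\beta} c_{\alpha,\beta}^{\mu}s_{\beta}\Bigr)\Bigl(\sum_{\gamma} c_{\alpha,\gamma}^{\nu}s_{\gamma}\Bigr), s_{\lambda}\Bigr\rangle
= \langle s_{\mu/\alpha}\, s_{\nu/\alpha}, s_{\lambda}\rangle,
\]
using bilinearity of $\langle\cdot,\cdot\rangle$ and the fact that multiplication in $\Lambda$ is bilinear. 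Summing over $\alpha$ gives the claimed formula, but with the sum over all partitions $\alpha$ rather than just $\alpha\subseteq\mu\wedge\nu$.

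The last step is to justify restricting the outer sum to $\alpha\subseteq\mu\wedge\nu$. This is just the observation that $s_{\mu/\alpha}=0$ unless $\alpha\subseteq\mu$ (there are no semistandard fillings of $\mu/\alpha$ otherwise, equivalently $c_{\alpha,\beta}^{\mu}=0$ for all $\beta$ unless $\alpha\subseteq\mu$), and similarly $s_{\nu/\alpha}=0$ unless $\alpha\subseteq\nu$. Hence the only $\alpha$ contributing a nonzero term satisfy $\alpha\subseteq\mu$ and $\alpha\subseteq\nu$, i.e.\ $\alpha\subseteq\mu\wedge\nu$, and we may harmlessly drop all other terms from the sum.

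I do not anticipate a serious obstacle here; the proof is a routine manipulation once one sees that the asymmetry comes from treating the roles of $(\alpha,\beta,\gamma)$ unequally: $\alpha$ stays as an explicit summation index while $\beta$ and $\gamma$ are absorbed into skew Schur functions and then paired off via the inner product. The one point deserving a line of care is the interchange of summation and the infinite-variable symmetric-function operations: since everything takes place within a fixed homogeneous component $\Lambda^{(|\lambda|)}$ (as $|\beta|+|\gamma|=|\lambda|$ forces $|\mu/\alpha|+|\nu/\alpha|=|\lambda|$ on nonzero terms), all sums are finite and no convergence issue arises. Alternatively, one could avoid inner products entirely and phrase the identity as $\sum_{\alpha\subseteq\mu\wedge\nu} [s_\lambda]\, s_{\mu/\alpha}s_{\nu/\alpha}$, where $[s_\lambda]$ denotes the coefficient extraction; but the inner-product formulation is cleaner for later use.
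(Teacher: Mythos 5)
Your proposal is correct and follows exactly the same route as the paper's (very terse) proof: combine (\ref{eqn:skew}) and (\ref{eqn:prodabc}) to rewrite the inner double sum over $\beta,\gamma$ as $\langle s_{\mu/\alpha}s_{\nu/\alpha},s_{\lambda}\rangle$, then restrict to $\alpha\subseteq\mu\wedge\nu$ via the vanishing of $s_{\mu/\alpha}$ and $s_{\nu/\alpha}$ otherwise. You have simply written out the details that the paper leaves implicit; no further changes are needed.
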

\begin{proof}
Combine (\ref{eqn:skew}), (\ref{eqn:prodabc}) and (\ref{eqn:Newell-Littlewood}) with
the fact that $s_{\mu/\alpha}=0$ unless $\alpha\subseteq \mu$ and $s_{\nu/\alpha}=0$
unless $\alpha\subseteq \nu$.
\end{proof}

Although we will not need it in this paper, we recall the definition of
$s_{[\lambda]}$ from \cite[Definition~2.1.1]{Koike}. Let $h_t=s_{(t)}$ be the homogeneous symmetric function of degree $t$. If $t<0$ then by convention $h_t=0$. Then if $\lambda\in {\sf Par}_n$,
let $\lambda^*=(\lambda_1, \lambda_2-1,\ldots,\lambda_{n}-(n-1))$. Below, $h_{\lambda^*}$ denotes
the column vector $(h_{\lambda_1},h_{\lambda_2-1},\ldots,h_{\lambda_n-(n-1)})^t$ and
$h_{\lambda^*+j(1^n)}+h_{\lambda^*-j(1^n)}$ means the column vector
\[(h_{\lambda_1+j}+h_{\lambda_1-j},h_{\lambda_2-1+j}+h_{\lambda_2-1-j},\ldots,
h_{\lambda_i-(i-1)+j}+h_{\lambda_i-(i-1)-j}, \ldots, h_{\lambda_n-(n-1)+j}
+h_{\lambda_n-(n-1)-j}
)^t.\]
With this notation,
\[s_{[\lambda]}:=
\left| h_{\lambda^*} \  h_{\lambda^*+(1^n)}+h_{\lambda^*-(1^n)} \  \cdots
\ h_{\lambda^*+j(1^n)}+h_{\lambda^*-j(1^n)} \ \cdots \ h_{\lambda^*+(n-1)(1^n)}
+h_{\lambda^*-(n-1)(1^n)}\right|.
\]
Hence, for example
\[s_{[4,2,1]}=
\left|
\begin{matrix}
h_4 & h_5+h_3 & h_6+h_2\\
h_1 & h_2 +1 & h_3\\
0 & 1 & h_1
\end{matrix}\right|=s_{4,2,1}-s_{4,1}-s_{3,2}-s_{3,1,1}+s_3+s_{2,1}.\]

\subsection{Pieri rules}\label{sec:Pieri}
The Pieri rule for Schur functions \cite[Theorem~7.5.17]{ECII}  states that
\begin{equation}
\label{eqn:usualPieri}
s_\mu s_{(p)} = \sum_{\lambda} s_\lambda,
\end{equation}
 where the sum is over all $\lambda$ such that $\lambda/\mu$ consists of $p$ boxes, none of which are in the same column.  We need the Newell-Littlewood analogue. It was known, and we include
 a proof which is the same as \cite[Proposition~3.1]{Okada} for completeness:

\begin{proposition}[Pieri-type rule; Theorem~13.1 of \cite{Sundaram} and Proposition~3.1 of \cite{Okada}]
\label{thm:Pieri}
$N_{\mu,(p),\lambda}$ equals the number of ways to remove $\frac{|\mu|+p-|\lambda|}{2}$ boxes from $\mu$ (all from different columns), then add $\frac{|\lambda|+p-|\mu|}{2}$ boxes (all to different columns) to make $\lambda$. In other words,
\begin{equation}
\label{eqn:inotherwords}
{s}_{[\mu]}{s}_{[(p)]}=\sum_{\lambda} {s}_{[\lambda]},
\end{equation}
where the sum is over the multiset of $\lambda$ obtained from $\mu$ by removing a horizontal strip of $j$ boxes where $0\leq j\leq p$ and then adding a horizontal strip of length $p-j$ boxes. 
\end{proposition}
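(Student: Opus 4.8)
The plan is to derive the Pieri-type rule directly from the definition \eqref{eqn:Newell-Littlewood} together with the ordinary Pieri rule \eqref{eqn:usualPieri}, since the second index is the one-row partition $(p)$. Start from
\[
N_{\mu,(p),\lambda}=\sum_{\alpha,\beta,\gamma} c_{\alpha,\beta}^{\mu}\, c_{\alpha,\gamma}^{(p)}\, c_{\beta,\gamma}^{\lambda}.
\]
The key simplification is that a Littlewood--Richardson coefficient $c_{\alpha,\gamma}^{(p)}$ is nonzero only when $\alpha$ and $\gamma$ are both one-row partitions (or empty), since $\alpha,\gamma\subseteq(p)$ forces $\ell(\alpha),\ell(\gamma)\le 1$; and in that case $c_{(a),(c)}^{(p)}=1$ precisely when $a+c=p$, and is $0$ otherwise. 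So the triple sum collapses to a single sum over $j$ with $\alpha=(j)$, $\gamma=(p-j)$, $0\le j\le p$, giving
\[
N_{\mu,(p),\lambda}=\sum_{j=0}^{p}\ \sum_{\beta}\, c_{(j),\beta}^{\mu}\, c_{\beta,(p-j)}^{\lambda}.
\]

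Next I would interpret the two remaining Littlewood--Richardson coefficients via the Pieri rule \eqref{eqn:usualPieri} (equivalently the skew-Schur expansion \eqref{eqn:skew}): $c_{(j),\beta}^{\mu}=c_{\beta,(j)}^{\mu}$ counts the ways $\mu/\beta$ is a horizontal strip of $j$ boxes, i.e.\ the ways to remove a horizontal strip of size $j$ from $\mu$ to obtain $\beta$; and $c_{\beta,(p-j)}^{\lambda}$ counts the ways $\lambda/\beta$ is a horizontal strip of $p-j$ boxes, i.e.\ the ways to add a horizontal strip of size $p-j$ to $\beta$ to obtain $\lambda$. Summing over all intermediate $\beta$ and all $j$ from $0$ to $p$ yields exactly the stated count: remove a horizontal strip of some size $j$ from $\mu$, then add a horizontal strip of size $p-j$, counting all such two-step routes from $\mu$ to $\lambda$. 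Matching $j$ against the box-counts in the first formulation: removing $j$ boxes and adding $p-j$ boxes changes the size by $p-2j$, so $|\lambda|=|\mu|+p-2j$ forces $j=\tfrac{|\mu|+p-|\lambda|}{2}$ and $p-j=\tfrac{|\lambda|+p-|\mu|}{2}$, matching the proposition's statement (with the understanding that for a fixed target $\lambda$ only one value of $j$ contributes, though the intermediate $\beta$ may still vary); the ``multiset of $\lambda$'' phrasing in \eqref{eqn:inotherwords} simply records the same data as a product expansion, using \eqref{eqn:polyver}.

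Finally, I would note the translation to \eqref{eqn:inotherwords}: since $\{s_{[\lambda]}\}$ is a basis of $\Lambda$ with $s_{[\mu]}s_{[(p)]}=\sum_\lambda N_{\mu,(p),\lambda}s_{[\lambda]}$ by \eqref{eqn:polyver}, the coefficient count just obtained is literally the multiplicity of $s_{[\lambda]}$, so the two displays in the proposition are equivalent. The only point requiring care — and the step I expect to be the main (minor) obstacle — is the bookkeeping that the horizontal strips removed and added are allowed to overlap in columns with each other (they are: the intermediate shape $\beta$ is arbitrary subject to the two strip conditions), versus the naive misreading that all $p$ boxes touched must lie in distinct columns; being explicit that the ``different columns'' condition applies separately to the removal step and to the addition step, not jointly, is what makes the bijection with the triple-sum formula clean. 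Everything else is a direct unwinding of definitions, so the proof is short.
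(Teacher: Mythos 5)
Your proposal is correct and follows essentially the same route as the paper: both collapse the triple sum by observing that $\alpha,\gamma\subseteq(p)$ forces them to be single rows with $c_{\alpha,\gamma}^{(p)}=1$, then identify the remaining Littlewood--Richardson coefficients as $0/1$ horizontal-strip indicators via the Pieri rule. The only cosmetic difference is that you sum over $j$ and then note that the size constraint pins down a unique $j$ for each $\lambda$, whereas the paper extracts $|\alpha|=\frac{|\mu|+p-|\lambda|}{2}$ directly from the parity identity (\ref{eqn:parity}) at the outset.
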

\begin{proof}
Consider any $\alpha,\beta, \gamma$ such that $c_{\alpha,\beta}^\mu c_{\alpha,\gamma}^{(p)}c_{\beta,\gamma}^\lambda >0$.  By (\ref{eqn:parity}),
$2|\alpha| = |\mu|+p - |\lambda|$, so $|\alpha| = \frac{|\mu|+p-|\lambda|}{2}$ and similarly $|\gamma|= \frac{|\lambda|+p-|\mu|}{2}$.  Since $\alpha, \gamma \subseteq (p)$, we have that $\alpha = (\frac{|\mu|+p-|\lambda|}{2})$ and $\gamma= (\frac{|\lambda|+p-|\mu|}{2})$.  Moreover, by (\ref{eqn:usualPieri}), $c_{\alpha,\gamma}^{(p)} = 1$. Therefore, 
\begin{equation}
\label{eqn:jdjdj}
N_{\mu,(p),\lambda} = \sum_\beta c_{(\frac{|\mu|+p-|\lambda|}{2}),\beta}^\mu c_{\beta,(\frac{|\lambda|+p-|\mu|}{2})}^\lambda.
\end{equation}
By (\ref{eqn:usualPieri}), $c_{(\frac{|\mu|+p-|\lambda|}{2}),\beta}^\mu \in \{0,1\}$. It is $1$ if and only if one can remove $\frac{|\mu|+p-|\lambda|}{2}$ boxes from different columns of $\mu$ to get $\beta$.  Similarly, $c_{\beta,(\frac{|\lambda|+p-|\mu|}{2})}^\lambda \in \{0,1\}$, and is $1$ if and only if one can add $\frac{|\mu|+p-|\lambda|}{2}$ boxes to different columns of $\beta$ to get $\lambda$.  We are done proving the $N_{\mu,(p),\lambda}$ claim by (\ref{eqn:jdjdj}). The assertion (\ref{eqn:inotherwords}) is a straightforward rephrasing of the first claim.
\end{proof}

\begin{example}
We have
\begin{multline}\nonumber
{s}_{[2,1]}{s}_{[3]} = {s}_{[1,1]}
+ {s}_{[2]}+ {s}_{[2, 1, 1]} 
+ {s}_{[2, 2]}
+ 2{s}_{[3, 1]}
+ {s}_{[4]}
+ {s}_{[3,2,1]}+ {s}_{[4,1,1]}
+{s}_{[4,2]}
+ {s}_{[5,1]}.
\end{multline}
For example, $\lambda=(3,1)$ can be obtained in two ways from $\mu=(2,1)$ using $j=1$:
\[\tableau{\ & \ \\  \ }\to \tableau{\ & \ \\ }\to \tableau{\ & \ & \ \\ \ } \text{\ \ and \ \  }
\tableau{\ & \ \\  \ }\to \tableau{\ \\ \  }\to \tableau{\ & \ & \ \\ \ }.\]
This explains the multiplicity in the computation.\qed
\end{example}

Proposition~\ref{thm:Pieri} immediately implies a special case that we also use.

\begin{corollary}
\label{prop:boxcase}
${s}_{[(1)]}   {s}_{[\nu]} =\sum_{\lambda} {s}_{[\lambda]}$,
where the sum is over all partitions $\lambda$
obtained by adding a box to $\nu$ or removing a box from $\nu$.\footnote{Let $({\mathbb Y},\leq)$ be Young's poset. Standard tableaux biject with walks in ${\mathbb Y}$ from $\emptyset$ to $\lambda$, where each step is a covering relation. Iterating (\ref{eqn:usualPieri})
shows $s_{(1)}^k=\sum_{\lambda} f^{\lambda} s_{\lambda}$,
where $f^{\lambda}$ counts standard Young tableaux of shape $\lambda$.
 An \emph{oscillating tableau} of shape $\lambda$ and length $k$ is a walk in ${\mathbb Y}$ starting at $\emptyset$ and ending at $\lambda$ with $k$ edges such that each step
 $\theta\to \pi$ either has $\pi/\theta$ or $\theta/\pi$ being a single box. Let $o^{\lambda,k}$ be the number of these tableaux. It is known that
 $o^{\lambda,k}={k\choose |\nu|}(k-1)!!f^{\nu}$. Iterating Corollary~\ref{prop:boxcase} gives
 $s_{[(1)]}^k=\sum_{\lambda} o^{\lambda,k}s_{[\lambda]}$; see \cite{Berele, Sundaram, Okada}.}
\end{corollary}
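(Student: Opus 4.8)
The plan is to read this off as the $p=1$ instance of Proposition~\ref{thm:Pieri}. Substituting $p=1$ into
\[{s}_{[\mu]}{s}_{[(p)]}=\sum_{\lambda}{s}_{[\lambda]}\]
with $\mu=\nu$ forces the size $j$ of the removed horizontal strip to satisfy $0\le j\le 1$, so $j\in\{0,1\}$, and I would split into these two cases.

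For $j=0$: one removes nothing from $\nu$ and then adjoins a horizontal strip of $p-j=1$ box. A horizontal strip of a single box is just a box, so the $j=0$ contribution is precisely the sum of ${s}_{[\lambda]}$ over all $\lambda$ obtained from $\nu$ by adding one box. For $j=1$: one removes a horizontal strip of one box (again, a single box) from $\nu$ and then adjoins a horizontal strip of $p-j=0$ boxes, i.e.\ nothing; so the $j=1$ contribution is the sum of ${s}_{[\lambda]}$ over all $\lambda$ obtained from $\nu$ by deleting one box. Combining the two cases yields exactly the index set asserted in the corollary.

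The remaining point is to confirm there is no hidden multiplicity. For a fixed $\lambda$ with $\nu\subseteq\lambda$ and $|\lambda|=|\nu|+1$, the box $\lambda/\nu$ is uniquely determined, so such $\lambda$ arises once from the $j=0$ case; symmetrically, each $\lambda$ with $|\lambda|=|\nu|-1$ arises once from the $j=1$ case. Since the two families have sizes $|\nu|+1$ and $|\nu|-1$ respectively, they are disjoint, so the total multiplicity of every $\lambda$ in the right-hand side is one. I expect no genuine obstacle here: all the substance sits in Proposition~\ref{thm:Pieri}, and what is left is the elementary bookkeeping above. (Alternatively, one could argue straight from (\ref{eqn:Newell-Littlewood}): a witness $\alpha,\beta,\gamma$ for $N_{\nu,(1),\lambda}>0$ has $\alpha,\gamma\subseteq(1)$, hence each of $\alpha,\gamma$ is $\emptyset$ or $(1)$, and $2|\alpha|=|\nu|+1-|\lambda|$ by (\ref{eqn:parity}); the usual Pieri rule (\ref{eqn:usualPieri}) then closes the argument — but this merely re-runs the proof of Proposition~\ref{thm:Pieri} in this special case.)
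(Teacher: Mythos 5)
Your proof is correct and is exactly the paper's argument: the paper states that Corollary~\ref{prop:boxcase} follows immediately from Proposition~\ref{thm:Pieri} by setting $p=1$, and your $j\in\{0,1\}$ case split together with the multiplicity check is precisely the bookkeeping being left implicit. Nothing further is needed.
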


\section{Shape of $s_{[\mu]}s_{[\nu]}$}\label{sec:product}

We describe some salient features of $s_{[\mu]}s_{[\nu]}$. Let $\mu \Delta \nu=(\mu\setminus \nu)\cup (\nu\setminus\mu)$ be the symmetric difference of $\lambda$ and $\mu$. 
\begin{theorem}
\label{thm:interval}
Fix $\mu,\nu\in {\sf Par}$. 

\begin{itemize}
\item[(I)] There exists $\lambda\in {\sf Par}$ with $|\lambda|=k$ and
$N_{\mu,\nu,\lambda}>0$ if and only if 
\[k\equiv |\mu\Delta\nu| \ ({\mathrm{mod}}\  2)  \text{\ \ and \ $|\mu\Delta\nu|\leq k \leq |\mu|+|\nu|$.}\]
\item[(II)] If $N_{\mu,\nu,\lambda}>0$ with $|\lambda|>|\mu\Delta\nu|$, there
exists $\lambda^{\downarrow\downarrow}$ such that $N_{\mu,\nu,\lambda^{\downarrow\downarrow}}>0$, $\lambda^{\downarrow\downarrow}\subset\lambda$
and $|\lambda^{\downarrow\downarrow}|=|\lambda|-2$.
\item[(III)] If $N_{\mu,\nu,\lambda}>0$ with $|\lambda|<|\mu|+|\nu|$, there exists
$\lambda^{\uparrow\uparrow}$ such that $N_{\mu,\nu,\lambda^{\uparrow\uparrow}}>0$,
$\lambda \subset \lambda^{\uparrow\uparrow}$ and 
$|\lambda^{\uparrow\uparrow}|=|\lambda|+2$.
\end{itemize}
\end{theorem}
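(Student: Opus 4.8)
The plan is to prove all three parts using the asymmetric formula of Proposition~\ref{prop:reformulate}, namely $N_{\mu,\nu,\lambda}=\sum_{\alpha\subseteq\mu\wedge\nu}\langle s_{\mu/\alpha}s_{\nu/\alpha},s_\lambda\rangle$, together with the Pieri-type rule of Proposition~\ref{thm:Pieri} (and its Corollary~\ref{prop:boxcase}) to move between partitions of adjacent sizes. The parity constraint in (I) is immediate from Lemma~\ref{prop:easy}(V): since $|\mu\Delta\nu|\equiv|\mu|+|\nu|\pmod 2$, the condition $k\equiv|\mu\Delta\nu|\pmod 2$ is just $|\mu|+|\nu|+k\equiv 0\pmod 2$. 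The upper bound $k\le|\mu|+|\nu|$ is Lemma~\ref{prop:easy}(III). So the substance of (I) is: (a) the lower bound $k\ge|\mu\Delta\nu|$ is forced whenever $N_{\mu,\nu,\lambda}>0$, and (b) every value of $k$ in the stated range (with the right parity) is actually achieved. Then (II) and (III) are the sharper ``every intermediate size is reached by a chain of containments going down (resp.\ up) by $2$'' statements, which also deliver (b) for (I).

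First I would handle the lower bound in (I). If $\alpha,\beta,\gamma$ is a witness, then $\beta\subseteq\mu$ with $|\mu|-|\beta|=|\alpha|$ and $\gamma\subseteq\nu$ with $|\nu|-|\gamma|=|\alpha|$, and $\beta,\gamma\subseteq\lambda$ with $|\lambda|=|\beta|+|\gamma|$. Intuitively $\mu$ and $\nu$ each lose $|\alpha|$ boxes to reach $\beta$ and $\gamma$, and then $\lambda$ must contain both $\beta$ and $\gamma$; so the boxes of $\mu\Delta\nu$ can each be ``blamed'' on at most one removal, giving $|\mu\Delta\nu|\le|\beta|+|\gamma|=|\lambda|$. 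I would make this precise columnwise: for each column index $c$, compare $\mu'_c,\nu'_c$ (conjugate parts); one checks $|\beta'_c - \gamma'_c| \le$ something controlled, but cleaner is to use Lemma~\ref{prop:easy}(IV) after symmetrizing, or simply: $|\mu\wedge\nu|$ satisfies $|\mu|+|\nu| - |\mu\Delta\nu| = 2|\mu\wedge\nu|$, and a witness forces $\alpha\subseteq\mu\wedge\nu$, $\gamma\subseteq\nu$, $\beta\subseteq\mu$, whence $|\lambda| = |\mu|+|\nu|-2|\alpha| \ge |\mu|+|\nu|-2|\mu\wedge\nu| = |\mu\Delta\nu|$. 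That last chain is the whole argument for the lower bound, since $\alpha\subseteq\mu\wedge\nu$ is exactly the range in Proposition~\ref{prop:reformulate}, and $N_{\mu,\nu,\lambda}>0$ means some term is nonzero.

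The heart of the matter is (II) (and symmetrically (III), which follows from (II) by $\mathfrak S_3$-symmetry, Lemma~\ref{prop:easy}(I), applied to a different index — or directly by the same method). Suppose $N_{\mu,\nu,\lambda}>0$ with $|\lambda|>|\mu\Delta\nu|$; pick a witness $\alpha,\beta,\gamma$, so $\alpha\ne\emptyset$ (else $|\lambda|=|\mu|+|\nu|$... no — rather $\alpha\neq\emptyset$ because $|\lambda|<|\mu|+|\nu|$ forces $|\alpha|\ge 1$; and $|\lambda|>|\mu\Delta\nu|$ gives room). Remove an outer corner box from $\alpha$ to get $\alpha^-$ with $|\alpha^-|=|\alpha|-1$. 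The issue is that $c_{\alpha^-,\beta}^\mu$, $c_{\alpha^-,\gamma}^\nu$ need not be positive for the \emph{same} $\beta,\gamma$. Instead I would argue: from $c_{\alpha,\beta}^\mu>0$ and $s_{\mu/\alpha^-} = \sum c_{\alpha^-,\delta}^\mu s_\delta$, the Pieri move $s_{\alpha^-}\cdot s_{(1)} \ni s_\alpha$ shows $s_{\mu/\alpha^-}$ contains, via $\langle s_{\mu/\alpha^-}, s_\beta s_{(1)}\rangle = \langle s_{\mu/\alpha}\cdot(\text{correction}),\ldots\rangle$, some $\beta^+\supseteq\beta$ with $|\beta^+|=|\beta|+1$ and $c_{\alpha^-,\beta^+}^\mu>0$ (one box added to $\beta$); similarly $\gamma^+\supseteq\gamma$ with $c_{\alpha^-,\gamma^+}^\nu>0$. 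Now $\beta^+,\gamma^+\subseteq\mu,\nu$ respectively, and I need $c_{\beta^+,\gamma^+}^{\lambda^{\downarrow\downarrow}}>0$ for some $\lambda^{\downarrow\downarrow}\subset\lambda$ of size $|\lambda|-2$: but $|\beta^+|+|\gamma^+| = |\beta|+|\gamma| + 2 = |\lambda|+2$, which is the wrong direction. So instead I should go the other way: keep $\alpha$, and use the Pieri rule on the $\lambda$ side. Reorganize: by Proposition~\ref{prop:reformulate} applied with the roles permuted (Lemma~\ref{prop:easy}(I)), $N_{\mu,\nu,\lambda} = N_{\lambda,\mu,\nu} = \sum_{\gamma\subseteq\lambda\wedge\mu}\langle s_{\lambda/\gamma} s_{\mu/\gamma}, s_\nu\rangle$; a nonzero term gives $\gamma\subseteq\lambda$ with $|\lambda/\gamma| = |\lambda|-|\gamma| = |\alpha|\ge 1$ (since $|\lambda|<|\mu|+|\nu|$). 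Remove a corner of $\lambda/\gamma$... the controlled object is $\lambda$; I would take $\lambda^{\downarrow\downarrow}$ by removing two boxes from $\lambda$ along the skew shape $\lambda/\gamma$ (possible since $|\lambda/\gamma|\ge 2$ once we know $|\lambda|-2\ge|\mu\Delta\nu|\ge|\gamma$-stuff — here is where $|\lambda|>|\mu\Delta\nu|$ enters to guarantee $|\alpha|\ge$ enough, actually $|\alpha|\ge 1$ suffices once combined with the parity step twice, or we need a genuinely size-$2$ move).

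\textbf{Main obstacle.} The real difficulty, and where I'd spend the effort, is producing the size-$2$ down-move \emph{landing on a partition $\lambda^{\downarrow\downarrow}$ with $\lambda^{\downarrow\downarrow}\subset\lambda$} while keeping $N>0$: a naive ``remove two boxes from $\lambda$'' can destroy the LR-coefficient $c_{\beta,\gamma}^{\lambda^{\downarrow\downarrow}}$. The clean fix is to not touch $\lambda$ on the $c_{\beta,\gamma}^\lambda$ side but instead \emph{grow $\beta$ and $\gamma$ each by one box} using the Pieri rule in the form of Corollary~\ref{prop:boxcase}/Proposition~\ref{thm:Pieri} on $N_{\mu,(1),\beta}$-type expansions: concretely, choose the witness with $|\beta|$ maximal; if $\alpha\ne\emptyset$ there is a box $b$ of $\mu/\beta$ (since $|\mu/\beta|=|\alpha|\ge 1$), and by the Pieri rule $c_{\alpha,\beta\cup b}^\mu \ge$ the number of ways... one must pick $b$ so that $\beta\cup b$ is a partition and $c_{\alpha',\beta\cup b}^\mu>0$ for $\alpha'=\alpha$ minus a box — this is exactly a standard ``jeu de taquin slide'' / Pieri compatibility that I'd verify by the combinatorial LR rule. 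Having enlarged $\beta\to\tilde\beta$, $\gamma\to\tilde\gamma$ with $c_{\tilde\alpha,\tilde\beta}^\mu, c_{\tilde\alpha,\tilde\gamma}^\nu>0$ where $\tilde\alpha = \alpha$ minus \emph{two} boxes, then $c_{\tilde\beta,\tilde\gamma}^{\lambda'}>0$ for some $\lambda'$ with $|\lambda'| = |\tilde\beta|+|\tilde\gamma| = |\lambda|$, wrong again. I think the correct bookkeeping is: decrease $|\alpha|$ by $1$ twice, each time \emph{either} growing one of $\beta,\gamma$ by a box \emph{or} (the useful case) leaving $\beta,\gamma$ able to fit in a smaller $\lambda$; tracking $|\lambda^{\downarrow\downarrow}| = |\beta^{\downarrow}|+|\gamma^{\downarrow}|$ where one of $\beta,\gamma$ \emph{shrinks}. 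Ultimately the cleanest route, which I would adopt, is induction on $|\lambda|$ combined with the Pieri rule Proposition~\ref{thm:Pieri} directly: write $N_{\mu,\nu,\lambda}$ in terms of $N_{\mu',(1),\cdot}$ by peeling a box off $\nu$ (valid when $\nu\ne\emptyset$, which holds since $|\lambda|>|\mu\Delta\nu|$ forces at least one of $\mu,\nu$ nonempty and we symmetrize), reducing to the already-understood Pieri case where the up/down-by-one structure, hence up/down-by-two after composing with parity, is transparent from Corollary~\ref{prop:boxcase}. I expect part (I)(b) to then fall out as the telescoping of (II) and (III) from the two extreme values $k=|\mu\Delta\nu|$ (take $\lambda = \mu\wedge\nu$-based witness, $\alpha=\mu\wedge\nu$) and $k=|\mu|+|\nu|$ (take $\lambda\supseteq$ with $c_{\mu,\nu}^\lambda>0$, e.g.\ $\lambda=\mu+\nu$ sorted, using Lemma~\ref{prop:easy}(II)).
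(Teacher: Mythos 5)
There is a genuine gap. Your treatment of the \emph{bounds} in (I) is correct and matches the paper: a witness forces $\alpha\subseteq\mu\wedge\nu$, hence $|\lambda|=|\mu|+|\nu|-2|\alpha|\geq|\mu|+|\nu|-2|\mu\wedge\nu|=|\mu\Delta\nu|$, and parity and the upper bound come from Lemma~\ref{prop:easy}. But everything constructive --- the existence half of (I), and all of (II) and (III) --- is not established. You yourself flag the ``Main obstacle'' and never resolve it: every bookkeeping route you try ends in ``wrong direction'' or ``wrong again,'' and the closing appeal to ``induction on $|\lambda|$ by peeling a box off $\nu$'' is not carried out and would itself require exactly the exchange lemmas you are missing. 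Note also that the existence half of (I) does not need (II)/(III) at all: by Proposition~\ref{prop:reformulate}, for \emph{every} $\alpha\subseteq\mu\wedge\nu$ the product $s_{\mu/\alpha}s_{\nu/\alpha}$ is a nonzero Schur-positive element of degree $|\mu|+|\nu|-2|\alpha|$, and $|\alpha|$ ranges over all of $\{0,\dots,|\mu\wedge\nu|\}$, so every admissible $k$ is hit.

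The missing ideas are two one-box exchange lemmas and a two-box descent lemma, which the paper isolates as Claims~\ref{claim:updown}--\ref{claim:Apr1hdhd}. For (II) one needs: (a) if $c_{\alpha,\beta}^{\mu}>0$ and $\alpha^{\uparrow}\subseteq\mu$ adds a box to $\alpha$, then some $\beta^{\downarrow}$ (a box removed from $\beta$) has $c_{\alpha^{\uparrow},\beta^{\downarrow}}^{\mu}>0$ --- proved in the paper via the equivariant coefficients of \cite{ARY}; and (b) if $c_{\beta^{\uparrow},\gamma^{\uparrow}}^{\lambda}>0$ with $\beta\subset\beta^{\uparrow}$, $\gamma\subset\gamma^{\uparrow}$ each by one box, then some $\lambda^{\downarrow\downarrow}\subset\lambda$ of size $|\lambda|-2$ has $c_{\beta,\gamma}^{\lambda^{\downarrow\downarrow}}>0$. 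Step (b) is exactly your ``naive removal can destroy the LR coefficient'' worry, and the paper's fix is the identity $s_{\beta}s_{\gamma}s_{(1)}^{2}=s_{\beta^{\uparrow}}s_{\gamma^{\uparrow}}+(\text{Schur-positive})$: any $s_{\lambda}$ on the right must arise from some $s_{\theta}$ with $c_{\beta,\gamma}^{\theta}>0$ and $\theta\subset\lambda$, $|\lambda/\theta|=2$. With (a) applied twice (to shrink $\beta$ and $\gamma$ while growing $\alpha$ inside $\mu\wedge\nu$, which is possible precisely because $|\lambda|>|\mu\Delta\nu|$ forces $\alpha\subsetneq\mu\wedge\nu$) and then (b), part (II) follows; part (III) needs the dual statements (remove a box from $\alpha$, add one to $\beta$; add a box to $\gamma$, add one to $\lambda$), the latter obtained from (a) by the Grassmannian duality $c_{\beta,\gamma}^{\lambda}=c_{\lambda^{\vee},\gamma}^{\beta^{\vee}}$. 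None of these steps is present in your proposal in a usable form, so the argument as written does not close.
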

\begin{proof}

(I): By Proposition~\ref{prop:reformulate}, $N_{\mu,\nu,\lambda}>0$ if and only if
there exists $\alpha\subseteq \mu\wedge\nu$
such that 
$\langle s_{\mu/\alpha}s_{\nu/\alpha},s_{\lambda}\rangle >0$. Now, 
\begin{equation}
\label{eqn:March25tyty}
s_{\mu/\alpha}s_{\nu/\alpha}\neq 0 \iff \alpha\subseteq \mu\wedge\nu
\end{equation}
Thus, by (\ref{eqn:skew}) and (\ref{eqn:prodabc}) combined, it suffices to 
characterize the possible values of $\deg(s_{\mu/\alpha}s_{\nu/\alpha})$.
By taking $\alpha=\emptyset$ we obtain that $\deg(s_{\mu/\alpha}s_{\mu/\alpha})\leq |\mu|+|\nu|$. Considering $\alpha=\mu\wedge\nu$  shows
$|\mu\Delta\nu|\leq \deg(s_{\mu/\alpha}s_{\mu/\alpha})$. Also, it is clear that
\begin{equation}
\label{eqn:March25zzz}
\deg(s_{\mu/\alpha}s_{\nu/\alpha})\equiv \deg(s_{\mu/\theta}s_{\nu/\theta}) 
\ ({\mathrm{mod}} \ 2), \ \  \forall \alpha,\theta\subseteq \mu\wedge\nu.
\end{equation}
Thus (I) follows.

(II) We need two claims.

\begin{claim}
\label{claim:updown}
Suppose $c_{\alpha,\beta}^{\mu}> 0$ and $\alpha \subset \alpha^{\uparrow}\subseteq \mu$
with $|\alpha^{\uparrow}/\alpha|=1$. Then there exists $\beta^{\downarrow}\subset\beta$
with $|\beta/\beta^{\downarrow}|=1$ such that $c_{\alpha^{\uparrow}\beta^{\downarrow}}^{\mu}>0$.
\end{claim}
\noindent
\emph{Proof of Claim~\ref{claim:updown}:} It is possible to prove this using
the Littlewood-Richardson rule, however for brevity, we will use a result \cite[Proposition~2.1]{ARY} which concerns the \emph{equivariant} generalization $C_{\lambda,\mu}^{\nu}$ of 
$c_{\lambda,\mu}^{\nu}$. For our purposes, it suffices to know that $C_{\lambda,\mu}^{\nu}$ is a polynomial that is nonzero only if $|\lambda|+|\mu|\geq |\nu|$ and moreover,
$C_{\lambda,\mu}^{\nu}=c_{\lambda,\mu}^{\nu}$ if $|\lambda|+|\mu|=|\nu|$.

Given $c_{\alpha,\beta}^{\mu}>0$, by part (A) of \cite[Proposition~2.1]{ARY}
for any $\alpha\subset \alpha^{\uparrow} \subset \mu$ (where $\alpha^{\uparrow}$ is
$\alpha$ with a box added) we have $C_{\alpha^{\uparrow},\beta}^{\mu}\neq 0$ (as a polynomial). However, by part (B) of \cite[Proposition~2.1]{ARY}, \emph{there exists}
$\beta^{\downarrow}\subset \beta$ (which is $\beta$ with a box removed) such that
$C_{\alpha^{\uparrow},\beta^{\downarrow}}^{\mu}\neq 0$. Since $|\alpha^\uparrow|+|\beta^\downarrow|=|\mu|$, $C_{\alpha^\uparrow,\beta^\downarrow}^{\mu}=
c_{\alpha^\uparrow,\beta^\downarrow}^{\mu}>0$.\qed

\begin{claim}
\label{claim:Apr2abc}
Suppose $\beta,\gamma, \beta^\uparrow,\gamma^\uparrow$ are partitions such that
$\beta\subset \beta^\uparrow$ where $|\beta^\uparrow/\beta|=1$,
and $\gamma\subset\gamma^\uparrow$ where $|\gamma^\uparrow/\gamma|=1$. If
$c_{\beta^\uparrow,\gamma^\uparrow}^{{\overline\lambda}}>0$ then there exists 
${\overline{\lambda}}^{\downarrow\downarrow}\subset {\overline\lambda}$
with $|{\overline\lambda}/{\overline\lambda}^{\downarrow\downarrow}|=2$
such that $c_{\beta,\gamma}^{{\overline\lambda}^{\downarrow\downarrow}}>0$.
\end{claim}
\noindent
\emph{Proof of Claim~\ref{claim:Apr1abc}:} By Pieri's rule (\ref{eqn:usualPieri}), 
\[s_{\beta}s_{(1)}=s_{\beta^{\uparrow}}+\text{(positive sum of Schur functions)}\]
and
\[s_{\gamma}s_{(1)}=s_{\gamma^{\uparrow}}+\text{(positive sum of Schur functions)}.\]
Hence, 
\begin{equation}
\label{eqn:Apr1bbb}
s_{\beta}s_{\gamma}s_{(1)}^2=s_{\beta^{\uparrow}}s_{\gamma^{\uparrow}}+\text{(positive sum of
Schur functions)}
\end{equation}
Expanding the lefthand side of (\ref{eqn:Apr1bbb}) into the basis of Schur functions, gives
\[s_{\beta}s_{\gamma}s_{(1)}^2=\sum_{\theta} c_{\beta,\gamma}^{\theta}(s_{\theta}s_{(1)}^2).\]
Hence, by Pieri's rule (\ref{eqn:usualPieri}), 
\[[s_{\kappa}]s_{\beta}s_{\gamma}s_{(1)}^2\neq 0\] only if $\kappa$ is obtained
from $\theta$ with $c_{\beta,\gamma}^{\theta}>0$ with $\theta\subset \kappa$
and $|\kappa/\theta|=2$. 
Now, since the righthand side of (\ref{eqn:Apr1bbb}) is Schur positive
the same must be true of any $\kappa$ such that $[s_{\kappa}]s_{\beta^\uparrow}s_{\gamma^{\uparrow}}$. In particular this is true of $\kappa={\overline\lambda}$.\qed

Since $N_{\mu,\nu,\lambda}>0$, there exists $(\alpha,\beta,\gamma)$
such that $c_{\alpha,\beta}^{\mu} c_{\alpha,\gamma}^{\nu}c_{\beta,\gamma}^{\lambda}>0$.
Since $|\lambda|>|\mu\Delta\nu|$ we must have $\alpha\subsetneq \mu\wedge\nu$.
Hence let 
\[\alpha\subsetneq \alpha^{\uparrow}\subseteq \mu\wedge \nu\] 
be $\alpha$
with a box added. By two applications of Claim~\ref{claim:updown}, there exists
$\beta^{\downarrow}$ and $\gamma^{\downarrow}$ which are respectively $\beta$
and $\gamma$ with a box removed such that 
$c_{\alpha^{\uparrow}\beta^{\downarrow}}^{\mu}, c_{\alpha^{\uparrow}\gamma^{\downarrow}}^{\nu}>0$. Now apply Claim~\ref{claim:Apr2abc} with
${\overline\lambda}=\lambda$ and $\beta^{\downarrow}, \gamma^{\downarrow},\beta,\gamma$. The conclusion is that $(\alpha^{\uparrow},\beta^{\downarrow},\gamma^{\downarrow})$ is a witness for 
$N_{\mu,\nu,\lambda^{\downarrow\downarrow}}$
and $\lambda^{\downarrow\downarrow}\subset \lambda$ of two smaller size, as desired.

(III): We need two additional claims.

\begin{claim}
\label{claim:Apr1abc}
Suppose $c_{\beta,\gamma}^{\lambda}>0$. If $\gamma^{\uparrow}\supset \gamma$
with $|\gamma^\uparrow/\gamma|=1$ then there exists $\lambda^\uparrow\supset
\lambda$ with $|\lambda^\uparrow/\lambda|=1$ such that $c_{\beta,\gamma^\uparrow}^{\lambda^\uparrow}>0$. 
\end{claim}
\noindent
\emph{Proof of Claim~\ref{claim:Apr1abc}:} Fix a  rectangle $R=\ell\times (m-\ell)$
(for some positive integers $\ell,m$)
sufficiently large to contain $\beta,\gamma,\lambda$. Given a Young diagram $\theta\subseteq R$ let $\theta^{\vee}$ be the $180$-degree rotation of
$R\setminus \theta$. A Schubert calculus symmetry 
for the Grassmannian ${\sf Gr}_{\ell}({\mathbb C}^m)$ states that
\begin{equation}
\label{eqn:Schubsym}
c_{\beta,\gamma}^{\lambda}=
c_{\lambda^{\vee},\gamma}^{\beta^{\vee}}.
\end{equation}
Choose $\ell,m$ sufficiently large  so that $\gamma^{\uparrow}\subset \beta^{\vee}$. By Claim~\ref{claim:updown}, there
exists $(\lambda^{\vee})^{\downarrow}$ which is $\lambda^{\vee}$ with a box
removed such that $c_{(\lambda^{\vee})^{\downarrow},\gamma^{\uparrow}}^{\beta^{\vee}}>0$. By (\ref{eqn:Schubsym}), 
\[0<c_{(\lambda^{\vee})^{\downarrow},\gamma^{\uparrow}}^{\beta^{\vee}}=c_{\beta,\gamma^{\uparrow}}^{((\lambda^{\vee})^{\downarrow})^{\vee}}.\]  
By definition of ``${}^\vee$'',
$((\lambda^{\vee})^{\downarrow})^{\vee}$ is of the form 
$\lambda^{\uparrow}$ such that $c_{\beta,\gamma^{\uparrow}}^{\lambda^{\uparrow}}>0$.\qed
\excise{
\noindent
\emph{Proof of Claim~\ref{claim:Apr1abc}:} We will use a (known) different
formulation of the Littlewood-Richardson rule. Namely, $c_{\beta,\gamma}^{\lambda}$
equals the number of semistandard and ballot tableaux of shape $\beta\star\gamma$
of content $\lambda$ (here we will decide ballotness by column reading). Here $\beta\star\gamma$ is the skew shape obtained by placing
$\beta$ southwest of $\gamma$ and such that the northeast corner \emph{point} of $\beta$
touches the southwest corner point of $\gamma$. Notice that the ballot condition forces
any such $T$ to have the boxes of $\gamma$ filled as a ``highest weight tableau'',
i.e., the $\ell$-th row contains only the label $\ell$.

Since we assume $c_{\beta,\gamma}^{\lambda}>0$, such a $T$ exists. To witness
$c_{\beta,\gamma^{\uparrow}}^{\lambda^{\uparrow}}$ we construct $T'$ from $T$ as
follows. Use the same filling as $T$ except in $\lambda^{\uparrow}/\lambda$ place the row
index $r$; this is $T^{(1)}$. If $T^{(1)}$ is ballot, then it is of content $\lambda^{\uparrow}$ and we are done; set $T'=T^{(1)}$. Otherwise, this additional $r$ might affect the ballotness
of $T^{(1)}$. Specifically, after reading a column (in the $\beta$ part) of $T^{(1)}$ we
might see more $r$'s than $r-1$'s. Find the rightmost such violation, say in column 
$C^{(1)}$. Suppose $r,r+1,\ldots,r+k$ is the longest consecutive sequence appearing
in $C^{(1)}$ starting with $r$. Replace this with $r-1,r,\ldots,r+k$. We will call
this the \emph{augmentation step}. The result is $T^{(2)}$. If $T^{(2)}$ is ballot,
then $T':=T^{(2)}$. Otherwise the creation of $r^{(2)}:=r-1$ must have created
a ballotness problem further to the left, where after reading a column $C^{(2)}$ of $T^{(2)}$
one sees more $r^{(2)}$'s than $r^{(2)}-1$'s. Similarly, we augment $C^{(2)}$
to create $T^{(3)}$; we repeat until some $T^{(k)}$ is ballot, at which point we set
$T'=T^{(k)}$. Notice that this process stops since there are only finitely many columns.

We assert that $T'$ is semistandard of shape $\beta\star \gamma^{\uparrow}$ and ballot with content $\lambda^{\uparrow}$. If this is true, then $c_{\beta,\gamma^{\uparrow}}^{\lambda^{\uparrow}}>0$, as required. It remains to verify the properties of 
$T'$.

($T'$ has shape $\beta\star\gamma^{\uparrow}$): The $\gamma$ part of $T$ has
had a box added. However, notice that the augmentation step on the $\beta$ part of $T, T^{(1)}, T^{(2)},\ldots$ does not change the shape, only the entries.

($T'$ has content $\lambda^{\uparrow}$): This is clear if $T=T^{(1)}$. Otherwise
the augmentation step to obtain $T^{(2)}$ reduces the number of $r$'s by $1$ and
increases the number of $r+k+1$'s by $1$, keeping the content otherwise unchanged.
Call the removed $r$ the \emph{removed label} and the $r+k+1$ the \emph{added label} of this step. Then at each successive augmentation step, the new removed label is the
previous step's added label. Therefore the content of $T'$ will be some 
$\lambda^\uparrow$.

($T'$ is ballot): Each augmentation can only affect make ballotness fail for the
added label $r''$, and by construction it fixes ballotness for the removed label. This
Moreover, since the added label is strictly larger than the removed label, ballotness
holds for any integer that is not an added label at some stage. At some stage, ballotness
must hold for the added label since  
 \qed}

\begin{claim}
\label{claim:Apr1hdhd}
Suppose $c_{\alpha,\beta}^{\mu}>0$. For any $\emptyset \subseteq \alpha^{\downarrow}\subset \alpha$
with $|\alpha/\alpha^{\downarrow}|=1$ there exists $\beta^{\uparrow}\supset \beta$
with $|\beta^{\uparrow}/\beta|=1$ such that $c_{\alpha^{\downarrow},\beta^{\uparrow}}^{\mu}>0$.
\end{claim} 
\noindent
\emph{Proof of Claim~\ref{claim:Apr1hdhd}:} Since $c_{\alpha,\beta}^{\mu}>0$,
there exists a LR tableau $T$ of shape $\mu/\alpha$ and
content $\beta$. We are done once we modify $T$ to give a LR tableau $T'$ of shape
$\mu/\alpha^{\downarrow}$ and content $\beta^{\uparrow}$, as follows: Place $1$ in
$b_1=\alpha/\alpha^{\downarrow}$. Find the first $1$ (if it exists, say in $b_2$) in the column reading (top to bottom, right to left) word order after $b_1$ and turn that into a $2$. Next, find the first
$2$ (again, if it exists, say in $b_3$) in the column reading word order after $b_2$
and change that to a $3$. We terminate and output $T'$ when, after replacing the $k-1$ in $b_k$ with $k$, there is no later $k$ in the column reading order. 

Since the number of boxes of $T$ is finite, this process does end. $T'$ is clearly of the desired shape. The content of $T'$ is 
\[\beta^\uparrow:=(\beta_1,\beta_2,\ldots,
\beta_{k}+1,\beta_{k+1},\ldots).\] 
It remains to check two things:

($T'$ is semistandard): Since $T'(b_1)=1$, we can only violate semistandardness if
the box $d_1$ directly below $b_1$ has $T(d_1)=1$. However, in that case $T'(d_1)=2$,
by construction. In general, since 
\[T'(b_j):=T(b_j)+1(=j)  \text{\ for $2\leq j\leq k$,}\] 
the entry in $b_j$ of $T'$ can only cause a problem with semistandardness with the box $d_j$ directly below,
or the box $r_j$ directly to the right. The former is only a concern if $T(d_j)=j$, but in that
case $T'(d_j)=j+1$. 

The latter concern occurs if $T(r_j)=j-1$. If $b_{j-1}$ is in a column
strictly to the right of $b_j$ then $T(r_j)=j-1$  cannot occur
since the $j-1$ in $r_j$ occurs strictly between $b_{j-1}$ and $b_j$ in the column reading
word. This contradicts the
definition of $b_j$. So we may assume $b_{j-1}$ is in the same column as $b_j$.
Since 
\[T(b_{j-1}):=j-2 \text{\ and $T(b_j):=j-1$},\] in fact,
$b_{j-1}$ is immediately above $b_{j}$, i.e., $d_{j-1}=b_j$. Since we assume
$T(r_j)=j-1$, semistandardness of $T$ implies $T(r_{j-1})=j-2$, which by the same argument implies
$b_{j-2}$ is directly above $b_{j-1}$ (otherwise we would contradict the definition of
$b_{j-1}$. Repeating this logic tells us that $b_2,b_3,\ldots,b_{j}$
are consecutive boxes in the same column with $T(b_2):=1$ and $T(r_2)=1$. However,
this forces $b_1$ to be in a column strictly right of $b_2$. Since $T(r_2)=1$ and
$r_2$ is between $b_1$ and $b_2$, we contradict the definition of $b_2$. Thus, the
situation $T(r_j)=j-1$ of this paragraph cannot actually occur.

($T'$ is ballot):
It is well-known that any semistandard tableau is ballot with respect to the row reading if
and only if it is ballot with respect to the column reading. For $j\geq 2$, we need to show that $T'$ is \emph{$(j-1,j)$-ballot}, that is, the number of $j-1$'s appearing at any given point of the column reading word exceeds the number of $j$'s
at the same point. If $j>k+1$ then the $j-1$'s and $j$'s in $T'$ and $T$ are in the exact same
positions, and $T'$ is $(j-1,j)$-ballot since $T$ is. If $j=k+1$ the same is true except
$T'$ has an additional $j-1=k$ at $b_k$, and ballotness similarly follows.

Now suppose $j\leq k$.
The only boxes $b_t$ ($1\leq t\leq k$) that contain $j-1$ or $j$ in $T$ or $T'$ are $b_{j-1},b_j$ and $b_{j+1}$.
Hence consider four regions of $T'$:
(i) strictly before $b_{j-1}$; (ii) starting from $b_{j-1}$ to before $b_j$; (iii)
starting from $b_j$ until before $b_{j+1}$; and (iv) $b_{j+1}$ and thereafter 
(in the column reading
order).  Below, let $w[b]$ be a partial reading word of $T$ that ends at a box $b$. Let $w'[b]$ be the
word using the same boxes of $T'$. 

In region (i), the $j$'s and $(j-1)$'s are in the same positions in both $T$ and $T'$.
Hence since $w[b]$ is $(j-1,j)$-ballot, the same is true of $w'[b]$ for any $b$ in (i). For any
$b$ in (ii), $w'[b]$ has one
more $j-1$ than $w[b]$ (since $T(b_{j-1})=j-2$ and $T'(b_{j-1})=j-1$. Hence, $w'[b]$ is
$(j-1,j)$-ballot because this is true of $w[b]$. 

For any $b$ in region (iii), $w'[b]$ and $w[b]$ have the same number of $(j-1)$'s
but $w'$ has one more $j$. There are two cases. 

\noindent
\emph{Case 1: ($b_{j+1}$ exists, \emph{i.e.}, $j<k$ and region  (iv) exists)} If $w'[b]$ is not $(j-1,j)$-ballot, then it follows $w[b_{j+1}]$ is not $(j-1,j)$-ballot,
a contradiction. Finally, if $b$ is in (iv), $w[b]$ and $w[b']$ have the same number of
$(j-1)$'s and $j$'s, so we are again done. 

\noindent
\emph{Case 2: ($b_{j+1}$ does not exist, \emph{i.e.}, $j=k$ and region (iv) does not exist)} This case means there are no
$j$'s in $T$ after $b_j$. Hence if $w'[b]$ fails to be $(j-1,j)$-ballot for any $b$ weakly
after $b_j$, in fact $w'[b_j]$ is not $(j-1,j)$-ballot. By definition, $w[b_j]$ has the same number of $(j-1)'s$ but one less
$j$ than $w'[b_j]$. Since $w'[b_j]$ is not $(j-1,j)$-ballot, it must be that $w[b_j]$ has the same number of $(j-1)$'s and $j$'s. Let $b^\circ$ be the box immediately before
$b_j$ in the reading order. Since $T(b_j)=j-1$ we conclude $w[b^\circ]$ is not $(j-1,j)$-ballot,
a contradiction. \qed

Since $N_{\mu,\nu,\lambda}>0$ there exists $(\alpha,\beta,\gamma)$ such that
$c_{\alpha,\beta}^{\mu}c_{\alpha,\gamma}^{\nu}c_{\beta,\gamma}^{\lambda}>0$.
Remove any corner from $\alpha$ to obtain $\alpha^{\downarrow}$. By two applications of Claim~\ref{claim:Apr1hdhd}  there exists $\beta^\uparrow$ and $\gamma^\uparrow$
such that $c_{\alpha^\downarrow,\beta^\uparrow}^\mu, c_{\alpha^\downarrow,\gamma^\uparrow}^{\nu}>0$. By two applications of Claim~\ref{claim:Apr1abc},
there exists $\lambda^{\uparrow\uparrow}$ (as in the theorem statement) such that
$c_{\beta^\uparrow,\gamma^\uparrow}^{\lambda^{\uparrow\uparrow}}>0$. 
Hence
$(\alpha^\downarrow,\beta^\uparrow,\gamma^\uparrow)$ witnesses that
$N_{\mu,\nu,\lambda^{\uparrow\uparrow}}>0$.
\end{proof}

\begin{example}
If $\mu=(3)$ and $\nu=(2,1)$ then $|\mu\Delta\nu|=2$ and 
$|\mu|+|\nu|=6$. We compute:
\begin{multline}\nonumber
{s}_{[3]}{s}_{[2,1]}=
{s}_{[1,1]}+{s}_{[2]}+{s}_{[2,1,1]} +
{s}_{[2,2]}+ 2{s}_{[3,1]}+ {s}_{[4]}
+ {s}_{[3,2,1]}+ {s}_{[4,1,1]}+{s}_{[4,2]}+
{s}_{[5,1]}.
\end{multline}
The reader can check agreement with Theorem~\ref{thm:interval}.\qed
\end{example}

There seems to be another ``structural'' aspect of (\ref{eqn:polyver}). Define 
\[h^{\mu,\nu}_{t}=\sum_{\lambda: |\lambda|=|\mu\Delta\nu|+2t} N_{\mu,\nu,\lambda}.\]

A sequence $(a_k)_{k=0}^{N}$ is \emph{unimodal} if  
there exists $0\leq m\leq N$ such that 
\[0\leq a_0\leq a_1\leq \ldots\leq  a_m \geq a_{m+1}
\geq \ldots a_{N-1}\geq a_N.\]
\begin{conjecture}[Unimodality]
\label{conj:log}
The sequence $\{h^{\mu,\nu}_{t}\}_{t=0}^{|\mu\wedge \nu|}$ is a unimodal sequence.
\end{conjecture}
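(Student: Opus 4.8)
\noindent\textbf{A strategy towards Conjecture~\ref{conj:log}.}
The plan is to recast $h^{\mu,\nu}_t$ combinatorially and then aim for the stronger statement that $\{h^{\mu,\nu}_t\}$ is \emph{log-concave}: by Theorem~\ref{thm:interval}(I) its support is exactly $\{0,1,\ldots,|\mu\wedge\nu|\}$ (no internal zeros), so log-concavity implies unimodality. (Should log-concavity fail, one would instead prove unimodality by a mode-location argument; the examples I have checked are log-concave.) For the reformulation: by Proposition~\ref{prop:reformulate}, and since $s_{\mu/\alpha}s_{\nu/\alpha}$ vanishes unless $\alpha\subseteq\mu\wedge\nu$ and is then homogeneous of degree $|\mu|+|\nu|-2|\alpha|$,
\[
h^{\mu,\nu}_t=\sum_{\substack{\alpha\subseteq\mu\wedge\nu\\ |\alpha|=|\mu\wedge\nu|-t}}\Big\langle s_{\mu/\alpha}\,s_{\nu/\alpha},\ \sum_{\rho\in{\sf Par}}s_\rho\Big\rangle=\#\big\{(\alpha,T)\big\},
\]
the pairs $(\alpha,T)$ having $\alpha\subseteq\mu\wedge\nu$, $|\alpha|=|\mu\wedge\nu|-t$, and $T$ a ballot semistandard filling (reading the northeast component first) of the disjoint skew shape $(\mu/\alpha)\sqcup(\nu/\alpha)$; here we used that $\langle s_\theta,\sum_\rho s_\rho\rangle$ counts ballot semistandard fillings of $\theta$ of arbitrary content. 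Equivalently, a short Cauchy-kernel computation identifies $\sum_t h^{\mu,\nu}_t q^{t}$ with $q^{-|\mu\Delta\nu|/2}$ times the coefficient of $s_\mu(x)s_\nu(y)$, in the expansion in the basis $\{s_\kappa(x)s_\sigma(y)\}$, of
\[
\prod_{i,j}\frac{1}{1-x_iy_j}\ \cdot\ \sum_{\rho\in{\sf Par}}q^{|\rho|/2}\,s_\rho(x,y),
\]
where $s_\rho(x,y)$ is the Schur function in the concatenated alphabet and $\sum_\rho q^{|\rho|/2}s_\rho(x,y)$ is Littlewood's kernel $\prod_k(1-z_k)^{-1}\prod_{k<l}(1-z_kz_l)^{-1}$ evaluated at $z=(q^{1/2}x_1,q^{1/2}x_2,\ldots,q^{1/2}y_1,q^{1/2}y_2,\ldots)$.

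The main line of attack is to build an injection $\Phi:\mathcal{H}_{t-1}\times\mathcal{H}_{t+1}\hookrightarrow\mathcal{H}_t\times\mathcal{H}_t$, where $\mathcal{H}_t$ is the set of pairs $(\alpha,T)$ above; this yields $(h^{\mu,\nu}_t)^2\ge h^{\mu,\nu}_{t-1}h^{\mu,\nu}_{t+1}$ and, together with the no-internal-zeros above, the conjecture. Given $\big((\alpha^-,T^-),(\alpha^+,T^+)\big)$ with $|\alpha^-|>|\alpha^+|$ (both inside $\mu\wedge\nu$), the informal plan is to transfer one cell of $\alpha$ from the larger to the smaller member so as to reach the common rank $|\mu\wedge\nu|-t$, while pushing the forced jeu-de-taquin slides through the two components of $T^-$ and of $T^+$ and re-standardising (Bender--Knuth / Littlewood--Richardson toggles) so that ballotness is preserved; since $\alpha^-$ and $\alpha^+$ are incomparable in general, one would route the transfer through their meet and join in Young's lattice. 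Invertibility of each elementary step would give injectivity. The prototype to emulate is the box-moving / nonintersecting-path proof of log-concavity for the rank sequences of the posets $L(m,n)$, here burdened with the tableau data.

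Two alternative routes. (a) \emph{Polytopal}: by Theorem~\ref{thm:thepolytope}, $h^{\mu,\nu}_t$ is the number of lattice points of a single polytope $\mathcal{Q}_{\mu,\nu}$ --- obtained by leaving $\lambda$ free, subject only to being a partition --- in the slice at height $|\mu\Delta\nu|+2t$; it would suffice to show that two-apart parallel slices of $\mathcal{Q}_{\mu,\nu}$ have comparable lattice counts below, respectively above, the mode, and one might hope to extract this from an explicit unimodular triangulation or an integer-decomposition property of $\mathcal{Q}_{\mu,\nu}$ (in the spirit of Corollary~\ref{prop:NLsemigroup}). (b) \emph{Generating-function positivity}: from the product above, one could try to show $\sum_t h^{\mu,\nu}_t q^t$ is real-rooted, or that the underlying symmetric-function kernel is Lorentzian in the sense of Br\"anden--Huh (exploiting the logarithmic concavity of normalised Schur polynomials), either of which forces log-concavity.

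I expect the \textbf{main obstacle} to be the repair step inside $\Phi$. Removing (or adding) a cell of $\alpha$ changes the skew shape on \emph{both} the $\mu$-side and the $\nu$-side simultaneously, and the ballot condition couples the two sides, so the filling adjustment is genuinely non-local; moreover one cannot factor $\Phi$ as a cell-moving injection times a filling-moving injection, because the ``cell part'' alone is not log-concave --- already the rank sequence of $[\emptyset,(2,1)]$ is $(1,1,2,1)$. So the weighting by ballot-filling counts must be used essentially, and making $\Phi$ well-defined and injective uniformly --- in particular at the extremes $t=0$ and $t=|\mu\wedge\nu|$, where no cell of $\alpha$ can be removed, respectively added --- is the crux. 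On the algebraic side the parallel difficulty is that the polynomial ring underlying the product above has trivial $\mathsf{GL}\times\mathsf{GL}$-invariants, so no positively graded algebra acts for free on the relevant multiplicity module; locating the correct Lorentzian or Cohen--Macaulay structure is then the content.
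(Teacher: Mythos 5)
First, note that Conjecture~\ref{conj:log} is stated in the paper as an \emph{open conjecture}: the authors verify it computationally for $|\mu|,|\nu|\leq 7$ and only \emph{suggest} a strategy (building chains in Young's poset, each $\lambda$ repeated $N_{\mu,\nu,\lambda}$ times and centered at the mode, using Theorem~\ref{thm:interval}(II),(III)). Your submission is likewise a strategy sketch rather than a proof, so there is no complete argument on either side to compare; but your \emph{primary} line of attack is not merely incomplete, it is refuted by the paper itself. You propose to prove the stronger statement of log-concavity via an injection $\Phi:\mathcal{H}_{t-1}\times\mathcal{H}_{t+1}\hookrightarrow\mathcal{H}_t\times\mathcal{H}_t$, hedging that ``the examples I have checked are log-concave.'' The paper exhibits the counterexample $\{h^{(2,2),(2,2)}_t\}_{t=0}^{4}=1,2,6,8,6$, where $h_1^2=4<6=h_0h_2$. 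Hence no such injection exists, and your alternative route (b) is killed as well: real-rootedness and Lorentzian/ultra-log-concave structures each imply log-concavity of the coefficient sequence, so neither can hold here. Your own remark that the unweighted ``cell part'' is not log-concave (rank sequence of $[\emptyset,(2,1)]$) was the right warning sign; the weighted sequence fails too.

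What survives of your proposal is correct and potentially useful: the identity
\[
h^{\mu,\nu}_t=\sum_{\substack{\alpha\subseteq\mu\wedge\nu\\ |\alpha|=|\mu\wedge\nu|-t}}\Big\langle s_{\mu/\alpha}s_{\nu/\alpha},\ \textstyle\sum_{\rho}s_\rho\Big\rangle
\]
does follow from Proposition~\ref{prop:reformulate} and the degree count $\deg(s_{\mu/\alpha}s_{\nu/\alpha})=|\mu|+|\nu|-2|\alpha|$, and the absence of internal zeros in the support is exactly Theorem~\ref{thm:interval}(I). But once log-concavity is off the table, everything rests on your undeveloped fallback (``a mode-location argument'') or on route (a), for which you give no mechanism to compare lattice counts of parallel slices of the polytope; neither is carried out. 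If you want to pursue this, the viable direction is the one the paper points to: a symmetric-chain-style decomposition of the multiset $\{\lambda \text{ with multiplicity } N_{\mu,\nu,\lambda}\}$ into saturated chains under the size-$\pm 2$ moves of Theorem~\ref{thm:interval}(II),(III), which would prove unimodality directly without any log-concavity claim.
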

We checked Conjecture~\ref{conj:log} for all $s_{[\mu]}s_{[\nu]}$ where
$0\leq |\mu|,|\nu|\leq 7$, and many larger cases. Theorem~\ref{thm:interval} (II) and (III) suggest proving Conjecture~\ref{conj:log} by constructing chains in Young's poset, each element $\lambda$ appearing $N_{\mu,\nu,\lambda}$-many times, ``centered'' at $m$:

\begin{example}
Continuing the previous example, $\{h^{\mu,\nu}_{t}\}_{t=0}^{3}=2,5,4$. Here $m=1$
and we are suggesting that the following chains demonstrate the unimodality:
\begin{align*}
(1,1)\subset & (2,2)  \subset (4,2)\\
(2)\subset & (2,1,1)  \subset (4,1,1)\\
\ & (3,1)  \\
\ & (3,1) \subset (3,2,1)\\
\ & (4)  \subset (5,1)
\end{align*}
There is choice in the chains; in the first and third chains we could 
interchange the roles of $(2,2)$ and $(3,1)$.\qed
\end{example}

A sequence is \emph{log-concave} if 
\[a_{t}^2\geq a_{t-1} a_{t+1} \text{ \ for $0< t< N$.}\]
Log-concavity implies unimodality. Thus, a warning against Conjecture~\ref{conj:log} is this:

\begin{example}[Log-concavity counterexample]
$\{h^{(2,2),(2,2)}_t\}_{t=0}^{4}=1, 2, 6, 8, 6$ is unimodal but not
log-concave.\qed
\end{example}

%
%
%
%
%
%
%
%
%
%
%

\section{H.~Hahn's notion of detection}
\label{sec:Hahn}
Our study of ${\sf NL}_n$ was stimulated by work of H.~Hahn \cite{Hahnearlier, Hahn}.  Suppose ${\sf H}$ is an irreducible reductive subgroup of ${\sf GL}_N$. H.~Hahn \cite{Hahnearlier} defines that a representation 
\begin{equation}
\label{eqn:detects}
\rho:{\sf GL}_N\to {\sf GL}(V)
\end{equation}
 \emph{detects}
 ${\sf H}$ if ${\sf H}$ stabilizes a line in $V$. She initiates a study of detection, motivated by R.~Langlands' \emph{beyond endoscopy}
proposal \cite{Beyond} towards proving his functoriality conjecture \cite{Langlands} (see \cite{Hahnearlier, Hahn} for elucidation and further references). 

The general question stated in \cite{Hahnearlier} is to determine which algebraic subgroups of ${\sf GL}_N$ are detected by a representation (\ref{eqn:detects})? In \cite{Hahn}, this
question is studied using the classical groups 
${\sf G}={\sf SO}_{2n+1}, {\sf Sp}_{2n}, {\sf SO}_{2n}$
 (where in the latter case $n$ is assumed to be even) and where 
 $\rho:{\sf GL}_N\to {\sf GL}_{N^3} \text{\ is $\rho= \otimes^3$}$, 
 i.e., the corresponding ${\sf GL}_N$-module is ${\mathbb C}^N \otimes {\mathbb C}^N  \otimes {\mathbb C}^N$ with the diagonal (standard) action of ${\sf GL}_N$ where 
 $g\cdot(u  \otimes v  \otimes w)=gu \otimes gv \otimes gw$.

In each case, H.~Hahn considers the (irreducible) ${\sf G}$-module
 ${\mathbb S}_{[\lambda]}(W)$ from the introduction (in type $D_n$ she assumes $\lambda_n=0$).
If $r:{\sf G}\to {\sf GL}_N$ is the ${\sf G}$-representation corresponding to 
${\mathbb S}_{[\lambda]}(W)$, then it makes sense to define ${\sf H}$ as the Zariski closure of
$r({\sf G})$ inside ${\sf GL}_N$. That is, in the notation of \cite{Hahn},
${\sf H}$ 
is the irreducible subgroup of
${\sf GL}_N$ of interest.

 Theorem~1.5 of \emph{ibid}.~proves that if $|\lambda|$ is odd then $\rho= \otimes^3$
does not detect ${\mathbb S}_{[\lambda]}(W)$. Conversely, 
when $|\lambda|$ is even. Theorem~1.6 of \emph{ibid.}~gives
three infinite subfamilies of ${\sf Par}_n$ where $\rho= \otimes^3$ detects 
${\mathbb S}_{[\lambda]}(W)$. 

We give a short proof of a complete converse. 

\begin{theorem}
\label{thm:main}
Let $\lambda\in {\sf Par}_n$. Then $\rho= \otimes^3$
detects ${\mathbb S}_{[\lambda]}(W)$ if $|\lambda|\equiv 0 \ ({\mathrm{mod}}\  2)$.\footnote{One might compare this parity characterization to \cite[Theorem~1.5]{Hahn} which shows that
$G:={\rm Sym}^{n-1}({\sf SL}_2)\hookrightarrow {\sf GL}_n$ is detected by $\rho:={\rm Sym}^3$ if and only if $n\equiv 1 \ ({\mathrm{mod}}\  4)$.}
\end{theorem}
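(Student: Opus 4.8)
The plan is to reduce the detection statement to a nonvanishing statement about Newell--Littlewood numbers, and then to prove that nonvanishing using the structural results of Section~\ref{sec:product}. First I would recall that $\rho=\otimes^3$ detects ${\mathbb S}_{[\lambda]}(W)$ exactly when the line-stabilizer condition holds, i.e.\ when ${\sf H}$ fixes a line in ${\mathbb C}^N\otimes{\mathbb C}^N\otimes{\mathbb C}^N$, where ${\mathbb C}^N$ is the representation ${\mathbb S}_{[\lambda]}(W)$. Since ${\sf H}$ is the Zariski closure of $r({\sf G})$, a line is ${\sf H}$-stable iff it is ${\sf G}$-stable, and a ${\sf G}$-stable line in a ${\sf G}$-module is the same as a copy of the trivial module ${\mathbb S}_{[\emptyset]}(W)$ inside ${\mathbb S}_{[\lambda]}(W)^{\otimes 3}$. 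Thus detection is equivalent to
\[
\bigl[{\mathbb S}_{[\emptyset]}(W)\bigr]\ {\mathbb S}_{[\lambda]}(W)^{\otimes 3}\neq 0.
\]
Working in the stable range is legitimate here because, as noted after (\ref{eqn:NLexpansion}) and in \cite[Theorem~2.3.4]{Koike}, the multiplicity $N_{\mu,\nu,\lambda}$ does not depend on $n$ (one may enlarge $n$ freely), and the trivial-module multiplicity in a triple tensor product is governed by Newell--Littlewood numbers: expanding ${\mathbb S}_{[\lambda]}(W)\otimes{\mathbb S}_{[\lambda]}(W)=\bigoplus_\kappa {\mathbb S}_{[\kappa]}(W)^{\oplus N_{\lambda,\lambda,\kappa}}$ and then pairing with ${\mathbb S}_{[\lambda]}(W)$ gives that the trivial module appears with multiplicity $\sum_\kappa N_{\lambda,\lambda,\kappa}N_{\kappa,\lambda,\emptyset}=N_{\lambda,\lambda,\lambda}$, using $N_{\kappa,\lambda,\emptyset}=\delta_{\kappa,\lambda}$ (from Lemma~\ref{prop:easy}(II), since $c^{\emptyset}_{\kappa,\lambda}=\delta_{\kappa,\emptyset}\delta_{\lambda,\emptyset}$ forces... rather, $N_{\kappa,\lambda,\emptyset}>0$ needs $|\kappa|=|\lambda|$ by the triangle inequality with $|\emptyset|=0$, i.e.\ $|\kappa|+|\lambda|\geq 0\geq ||\kappa|-|\lambda||$ forces $\kappa=\lambda$ as partitions via $N_{\kappa,\lambda,\emptyset}=c^{\lambda}_{\kappa,\emptyset}\cdot(\text{stuff})$). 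So the theorem is equivalent to: $|\lambda|$ even $\Rightarrow N_{\lambda,\lambda,\lambda}>0$, which is precisely the ``$\Leftarrow$'' of the displayed equivalence in item (II) of the introduction.

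Next I would prove $N_{\lambda,\lambda,\lambda}>0$ for $|\lambda|$ even using Theorem~\ref{thm:interval}. Take $\mu=\nu=\lambda$. Then $\mu\Delta\nu=\emptyset$, so $|\mu\Delta\nu|=0$, and Theorem~\ref{thm:interval}(I) tells us that there exists $\kappa$ with $|\kappa|=k$ and $N_{\lambda,\lambda,\kappa}>0$ precisely for even $k$ with $0\leq k\leq 2|\lambda|$. In particular there is \emph{some} $\kappa$ with $|\kappa|=|\lambda|$ (if $|\lambda|$ is even) and $N_{\lambda,\lambda,\kappa}>0$; the issue is to get $\kappa=\lambda$ on the nose. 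For this I would run the ``descent'' machinery of Theorem~\ref{thm:interval}(II) and ``ascent'' of (III) more carefully: starting from the witness $(\emptyset,\lambda,\lambda)$ for $N_{\lambda,\lambda,2|\lambda|}$ (or from $(\lambda,\emptyset,\emptyset)$-type witnesses), repeatedly apply the two-box-down step of part (II) to walk down through sizes $2|\lambda|, 2|\lambda|-2,\dots$, and show that one can arrange the chain to pass through $\lambda$ itself. Concretely, the witness-level statement is: if $(\alpha,\beta,\gamma)$ witnesses $N_{\lambda,\lambda,\cdot}$, the proof of (II) produces $(\alpha^{\uparrow},\beta^{\downarrow},\gamma^{\downarrow})$ witnessing a size-two-smaller $\lambda^{\downarrow\downarrow}$; since by symmetry ($\mu=\nu$) we may always take $\beta=\gamma$, we can maintain $\beta=\gamma$ along the whole chain, so the witnesses are of the form $(\alpha,\beta,\beta)$ with $2|\alpha|+|\beta|\cdot 2 = \cdots$ — wait, with $|\alpha|+|\beta|=|\lambda|$ and target size $|\beta|+|\beta|=2|\beta|$. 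So $N_{\lambda,\lambda,2|\beta|}>0$ whenever $\beta\subseteq\lambda$ with $\lambda/\beta$ admitting an LR filling by $\lambda/\beta$... more precisely whenever $c^{\lambda}_{\alpha,\beta}>0$ for $\alpha=\lambda\ominus\beta$ of the complementary size and $c^{\lambda}_{\beta,\beta}>0$ is automatic when... Here I would pick $\beta$ to be a partition with $|\beta|=\lfloor|\lambda|/2\rfloor$-ish so that $2|\beta|=|\lambda|$, which needs $|\lambda|$ even — exactly the hypothesis. The cleanest route: show directly that for $|\lambda|$ even there is a partition $\alpha$ with $\alpha\subseteq\lambda$, $|\alpha|=|\lambda|/2$, and $c^{\lambda}_{\alpha,\alpha'}>0$ for a suitable $\alpha'$; equivalently find $\alpha,\beta$ with $|\alpha|=|\beta|=|\lambda|/2$, $c^{\lambda}_{\alpha,\beta}>0$, $c^{\alpha}_{\cdot}$... — then $(\alpha,\beta,\cdot)$-type witnesses give $N_{\lambda,\lambda,\lambda}\geq \sum_{\alpha} (c^{\lambda}_{\alpha,\cdot})^2>0$ via Proposition~\ref{prop:reformulate}: $N_{\lambda,\lambda,\lambda}=\sum_{\alpha\subseteq\lambda}\langle s_{\lambda/\alpha}^2, s_\lambda\rangle$.

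The main obstacle, and where I would concentrate effort, is the last step: showing $\sum_{\alpha\subseteq\lambda}\langle s_{\lambda/\alpha}^2,s_\lambda\rangle>0$ when $|\lambda|$ is even. The sum is over $\alpha$ with $2|\alpha|=$ (anything), but the term $\langle s_{\lambda/\alpha}^2,s_\lambda\rangle$ can be nonzero only when $2|\lambda/\alpha|=|\lambda|$, i.e.\ $|\alpha|=|\lambda|/2$, so evenness of $|\lambda|$ is exactly what is needed for the relevant $\alpha$ to be an integer-size partition at all. So I must exhibit a single $\alpha\subseteq\lambda$ with $|\alpha|=|\lambda|/2$ and $\langle s_{\lambda/\alpha}^2, s_\lambda\rangle>0$, i.e.\ $\sum_\delta (c^{\lambda}_{\alpha,\delta})^2$ with $\delta$ ranging so that... $\langle s_{\lambda/\alpha}^2,s_\lambda\rangle=\sum_{\delta} c^{\lambda}_{\alpha,\delta} c^{\delta}_{?}$ — actually $\langle s_{\lambda/\alpha}\cdot s_{\lambda/\alpha},s_\lambda\rangle=\sum_{\delta}(c^{\lambda}_{\alpha,\delta})(c^{\lambda}_{\alpha,?})$; more usefully, $s_{\lambda/\alpha}=\sum_\delta c^\lambda_{\alpha,\delta}s_\delta$, so this equals $\sum_{\delta,\epsilon} c^\lambda_{\alpha,\delta}c^\lambda_{\alpha,\epsilon}c^\lambda_{\delta,\epsilon}$, manifestly $\geq 0$ and positive as soon as there is one $\delta$ with $c^\lambda_{\alpha,\delta}>0$ and $c^\lambda_{\delta,\delta}>0$. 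Now $c^\lambda_{\alpha,\delta}>0$ with $|\alpha|=|\delta|=|\lambda|/2$ and then $c^\lambda_{\delta,\delta}>0$: the natural choice is to take $\delta$ to be the ``first half'' of $\lambda$ in a reading-word sense, e.g.\ built so that $\delta$ and its complement in $\lambda$ are equal — concretely, split each row $\lambda_i$ as evenly as possible and use a staircase-type argument, or invoke the fact (provable from the LR rule) that $c^{\lambda}_{\delta,\lambda/\delta}\geq 1$ for the Young-diagram complement, reducing to finding $\delta$ with $\delta=\lambda/\delta$ as shapes after suitable normalization. I would handle the possible parity clash between ``half of $\lambda$'' being a genuine partition by choosing $\delta$ greedily: fill $\delta$ with $\lceil\lambda_i/2\rceil$ or $\lfloor\lambda_i/2\rfloor$ boxes in row $i$, alternating to keep column-strictness of the complementary LR filling, which works precisely because $|\lambda|$ is even so the floors and ceilings balance globally. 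That combinatorial construction of $\delta$ is the crux; everything else is formal.
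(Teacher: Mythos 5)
Your reduction is the same as the paper's: detection is equivalent to $N_{\lambda,\lambda,\lambda}>0$ (Hahn's \cite[Proposition~3.1]{Hahn}, quoted as (\ref{eqn:triplelambdas})), and from (\ref{eqn:Newell-Littlewood}) it suffices to exhibit a single $\mu$ with $c^{\lambda}_{\mu,\mu}>0$, since then $N_{\lambda,\lambda,\lambda}\geq (c^{\lambda}_{\mu,\mu})^3>0$; this is exactly Lemma~\ref{thelemma}. (Your detour through Theorem~\ref{thm:interval} and the chain-walking of parts (II)--(III) is unnecessary: Theorem~\ref{thm:interval}(I) only controls the \emph{sizes} of the $\kappa$ with $N_{\lambda,\lambda,\kappa}>0$ and cannot by itself force $\kappa=\lambda$, as you yourself note.)

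The genuine gap is in the last step, which you correctly identify as the crux but do not carry out. You need a partition $\mu$ with $|\mu|=|\lambda|/2$ and $c^{\lambda}_{\mu,\mu}>0$, and your prescription --- ``fill $\delta$ with $\lceil\lambda_i/2\rceil$ or $\lfloor\lambda_i/2\rfloor$ boxes in row $i$, alternating'' --- does not work as stated: for $\lambda=(3,3,3,3)$ alternating gives $(2,1,2,1)$, which is not a partition. The correct choice (Claim~\ref{theclaim} in the paper) is to take $\mu_j=\lambda_j/2$ on the even rows, $\lceil\lambda_j/2\rceil$ on the \emph{first} half of the odd rows, and $\lfloor\lambda_j/2\rfloor$ on the \emph{second} half of the odd rows (there are evenly many odd rows because $|\lambda|$ is even). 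Moreover, choosing $\mu$ is only half the work: one must then produce an explicit semistandard ballot filling of $\lambda/\mu$ with content $\mu$, and the verification hinges on where the ``extra'' boxes in the bottom odd rows are placed and which labels they carry (the paper labels the extra box in the $(k+j)$-th odd row by the index of the $j$-th odd row, and then checks column-strictness and ballotness case by case). None of this is formal; it is the entire content of the proof, and your proposal leaves it unestablished. Your fallback remark that $c^{\lambda}_{\delta,\lambda/\delta}\geq 1$ ``for the Young-diagram complement'' is also not a meaningful statement: $\lambda/\delta$ is a skew shape, not a partition, and the claim you actually need is precisely that the skew shape $\lambda/\mu$ admits an LR filling of content $\mu$.
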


\noindent
\emph{Proof of Theorem~\ref{thm:main}:} 
 Hahn's \cite[Proposition~3.1]{Hahn} shows
that 
\begin{equation}
\label{eqn:triplelambdas}
\rho= \otimes^3 \text{ \ detects ${\mathbb S}_{[\lambda]}(W)$ if and only if
$N_{\lambda,\lambda,\lambda}>0$.}
\end{equation}
In \emph{ibid.} this is used to prove $(\Rightarrow)$.\footnote{This follows from Proposition~\ref{prop:easy}(IV), which just extends the argument made
in \cite{Hahn}.}
Therefore, (\ref{eqn:Newell-Littlewood}) shows
\begin{lemma}
\label{thelemma}
$\rho= \otimes^3$ detects ${\mathbb S}_{[\lambda]}(W)$ if there exists $\mu\in {\sf Par}_n$ such that $c^{\lambda}_{\mu,\mu}>0$.
\end{lemma}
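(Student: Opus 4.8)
The plan is short, since this is purely the reduction step that converts a representation-theoretic assertion into a combinatorial one. By Hahn's equivalence (\ref{eqn:triplelambdas}), detection of ${\mathbb S}_{[\lambda]}(W)$ by $\rho=\otimes^3$ is the same as $N_{\lambda,\lambda,\lambda}>0$, so it suffices to prove the latter under the hypothesis that $c^\lambda_{\mu,\mu}>0$ for some $\mu\in{\sf Par}_n$. First I would write down the defining expansion (\ref{eqn:Newell-Littlewood}) for the triple index $(\lambda,\lambda,\lambda)$, namely
\[
N_{\lambda,\lambda,\lambda}=\sum_{\alpha,\beta,\gamma} c^{\lambda}_{\alpha,\beta}\,c^{\lambda}_{\alpha,\gamma}\,c^{\lambda}_{\beta,\gamma},
\]
and observe that every summand is a product of nonnegative integers, hence $N_{\lambda,\lambda,\lambda}$ is bounded below by any single term.

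Next I would single out the term indexed by $\alpha=\beta=\gamma=\mu$. All three Littlewood-Richardson coefficients appearing there are $c^{\lambda}_{\mu,\mu}$, so that term equals $(c^{\lambda}_{\mu,\mu})^3$, which is strictly positive by hypothesis. Since the remaining terms are $\geq 0$, we get $N_{\lambda,\lambda,\lambda}\geq (c^{\lambda}_{\mu,\mu})^3>0$; equivalently, $(\mu,\mu,\mu)$ is a witness for $N_{\lambda,\lambda,\lambda}>0$ in the sense of the proof of Lemma~\ref{prop:easy}. Combining this with (\ref{eqn:triplelambdas}) gives the claim.

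There is really no obstacle: the only point requiring a word of care is that $\mu$ lies in ${\sf Par}_n$, so that the three copies of $c^{\lambda}_{\mu,\mu}$ are all meaningful with indices of length at most $n$ — but this is exactly what is assumed. The substantive work is entirely downstream, in actually exhibiting, for every $\lambda$ with $|\lambda|$ even, a partition $\mu$ with $c^{\lambda}_{\mu,\mu}>0$; Lemma~\ref{thelemma} merely records that producing such a $\mu$ is enough to conclude Theorem~\ref{thm:main}.
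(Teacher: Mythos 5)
Your proposal is correct and follows the paper's own reasoning: the paper likewise invokes Hahn's equivalence (\ref{eqn:triplelambdas}) and then observes that the single term $\alpha=\beta=\gamma=\mu$ in (\ref{eqn:Newell-Littlewood}) contributes $(c^{\lambda}_{\mu,\mu})^3>0$ to $N_{\lambda,\lambda,\lambda}$. Nothing is missing.
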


\begin{claim}
\label{theclaim}
For any $\lambda\in {\sf Par}_n$ with $|\lambda| = 2m$, there exists 
$\mu\in {\sf Par}_n$ such that $c^{\lambda}_{\mu,\mu} > 0$.
\end{claim}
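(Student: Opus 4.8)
The plan is to construct an explicit $\mu$ that works for every $\lambda$ with $|\lambda|=2m$, rather than to argue existence abstractly. The natural candidate is $\mu = \lceil \lambda/2 \rceil$ in the sense of taking $\mu_i = \lceil \lambda_i/2 \rceil$, or perhaps more robustly a $\mu$ obtained by a ``greedy halving'' of the diagram of $\lambda$: fill the boxes of $\lambda$ so that two copies of $\mu$ tile $\lambda$ after an LR-type combinatorial overlay. Concretely, I would try to show $c^\lambda_{\mu,\mu}>0$ by exhibiting a single LR tableau of shape $\lambda/\mu$ with content $\mu$. The cleanest way to get such a tableau is to choose $\mu$ so that $\lambda/\mu$ is a skew shape that supports the ``superstandard'' (highest-weight) filling: put $i$ in every box of row $i$ of $\lambda/\mu$. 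That filling is automatically ballot; it is semistandard precisely when $\lambda/\mu$ has no two boxes in the same column in consecutive rows in a bad configuration — i.e.\ when $\mu_i \ge \lambda_{i+1}$ for all $i$ — and it has content $\mu$ exactly when the number of boxes of $\lambda/\mu$ in row $i$ equals $\mu_i$, i.e.\ $\lambda_i - \mu_i = \mu_i$, forcing $\lambda_i = 2\mu_i$.

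So the first step is to handle the case where $\lambda$ has all even parts: set $\mu_i = \lambda_i/2$, take the superstandard filling of $\lambda/\mu$, and check semistandardness, which reduces to verifying $\mu_i \ge \lambda_{i+1}$, i.e.\ $\lambda_i/2 \ge \lambda_{i+1}$. This inequality can \emph{fail} (e.g.\ $\lambda=(2,2)$ gives $\mu=(1,1)$ and $\lambda_1/2 = 1 = \lambda_2$, which is the boundary case and is actually fine; but $\lambda = (2,2,2)$ needs $1 \ge 2$, false). Hence the superstandard filling alone is not enough, and the real content of the claim is getting around this. The second step, therefore, is to allow more flexible fillings: given a general $\lambda$ of even size, I would argue by induction on $m$ or on $\ell(\lambda)$, peeling off boxes two at a time. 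Using Theorem~\ref{thm:interval}(II)--(III) is tempting but circular-ish here since those are about $N_{\mu,\nu,\lambda}$ with $\mu,\nu$ fixed, not about $c^\lambda_{\mu,\mu}$; instead I'd use the Pieri rule (\ref{eqn:usualPieri}) directly: if $c^{\tilde\lambda}_{\tilde\mu,\tilde\mu}>0$ and I can pass from $\tilde\lambda$ to $\lambda$ by adding a horizontal strip of length $p$ to $\tilde\lambda$ and simultaneously writing $\mu = \tilde\mu + (\text{horizontal strip of length } p)$ with the strips ``compatible,'' then $c^\lambda_{\mu,\mu}>0$ follows by a double Pieri application: $s_\mu^2 \supseteq (s_{\tilde\mu} s_{(p)})^2 = s_{\tilde\mu}^2 s_{(p)}^2 \supseteq s_{\tilde\lambda} \cdot (\text{stuff}) \cdot s_{(p)}^2$, and one term of $s_{\tilde\lambda} s_{(2p)}$-type expansions lands on $\lambda$. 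More carefully: since $s_\mu s_\mu = \sum c^\kappa_{\mu,\mu} s_\kappa$ and $c^\lambda_{\mu,\mu} = \langle s_{\lambda/\mu}, s_\mu\rangle$, it suffices that $s_\mu$ appears in $s_{\lambda/\mu}$; and $s_{\lambda/\mu} = s_{(\tilde\lambda + \text{strip})/(\tilde\mu+\text{strip})}$ can be related to $s_{\tilde\lambda/\tilde\mu}$ when the added strips are nested appropriately.

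A slicker route, which I'd actually pursue as the main line: induct on the number of columns. Write $\lambda' $ for the conjugate. By Lemma~\ref{prop:easy}(VI) (or rather its underlying fact $c^\lambda_{\mu,\mu} = c^{\lambda'}_{\mu',\mu'}$), it's equivalent to find $\mu$ with $c^{\lambda'}_{\mu',\mu'}>0$, so I may work with columns. Process the columns of $\lambda$ from tallest to shortest. Maintain a partition $\mu$ built so far; at each new column of height $h$, I want to split its $h$ boxes between the two copies of $\mu$. Greedily assign $\lceil h/2\rceil$ boxes to the ``first'' copy and $\lfloor h/2\rfloor$ to the ``second,'' tracking that at the very end both copies have received the same total number of boxes $m$ — this uses $|\lambda| = 2m$ and an exchange argument to balance the ceilings against the floors across columns (there are as many ``odd'' columns as needed for parities to work out since $|\lambda|$ is even). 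Packaging this as a valid pair of LR tableaux is the step I expect to be the main obstacle: one must verify the ballot and semistandard conditions simultaneously for the assignment, and the naive greedy split can violate ballotness. The fix is to always fill column $j$ of $\lambda/\mu$ with the \emph{smallest available} row indices (making each filling as ``left-justified'' / highest-weight as possible), which is exactly the structure that forces ballotness; semistandardness then follows from processing columns in decreasing height order. I would verify this works on the boundary cases $\lambda=(2,2,\dots,2)$ (take $\mu$ to be a staircase-like shape) and single-column/single-row $\lambda$ before asserting it in general, and present the balancing-of-parities count as the one genuinely new combinatorial lemma.
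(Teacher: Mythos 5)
Your overall strategy is the same as the paper's: take $\mu$ to be a ``rounded half'' of $\lambda$ and exhibit an explicit LR tableau of shape $\lambda/\mu$ and content $\mu$. But the proposal stops short of a proof at exactly the point where the work lies. First, a concrete error: the superstandard filling (entry $i$ in every box of row $i$ of $\lambda/\mu$) is \emph{always} column-strict, since two boxes of $\lambda/\mu$ stacked in a column lie in consecutive rows and receive consecutive entries; the condition $\mu_i\ge\lambda_{i+1}$ you impose is the condition for $\lambda/\mu$ to be a horizontal strip, which is irrelevant here. In particular $\lambda=(2,2,2)$ is not a counterexample: with $\mu=(1,1,1)$ the single column of $\lambda/\mu$ filled $1,2,3$ is semistandard, ballot, of content $\mu$, so $c^{(2,2,2)}_{(1,1,1),(1,1,1)}=1$. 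Thus the all-even case is already finished by $\mu=\lambda/2$ with the superstandard filling, and this false alarm is what pushes you toward the more elaborate (and unverified) column-by-column argument.

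Second, and more seriously, the general case is only a plan. You never pin down $\mu$: the ``exchange argument to balance the ceilings against the floors'' must be made explicit, and the choice matters --- the paper rounds \emph{up} the $k$ largest odd parts and \emph{down} the $k$ smallest (where $2k$ is the number of odd parts), which is what makes $|\mu|=m$ \emph{and} keeps $\mu$ a partition. You also never produce the filling: the paper's key device is that each ``top-odd'' row $i_j$ is one label $i_j$ short of its content quota, and that missing label is planted in the single leftover box of the ``bottom-odd'' row $i_{k+j}$ (the extraordinary boxes), after which semistandardness and ballotness are checked case by case. Your substitute --- ``fill each column with the smallest available row indices and ballotness follows'' --- is precisely the assertion that needs proof, and you acknowledge not having it. The Pieri-induction fallback has its own gap: knowing $s_\lambda$ appears in $(s_{\tilde\mu}s_{(p)})^2$ does not show it appears in the single summand $s_\mu s_\mu$, so positivity of $c^{\lambda}_{\mu,\mu}$ does not follow. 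In short: right idea, but the construction and the LR verification that constitute the paper's proof are missing.
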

\noindent\emph{Proof of Claim~\ref{theclaim}:}
Since $|\lambda|$ is even, there are an even number of odd parts in $\lambda$. Let 
\[\lambda_{i_{1}} \geq \ldots \geq \lambda_{i_{2k}}\] 
be the odd parts of $\lambda$.  

Define $\mu = (\mu_{1},\mu_{2},\ldots,\mu_{n})$ to be a partition of $m$, where 
\[ \mu_{j} = \begin{cases} \frac{\lambda_{j}}{2} &  \lambda_{j} $ is even$ \\ \frac{\lambda_{j}+1}{2} & \lambda_j $ is odd and $ j \leq i_{k} \\ \frac{\lambda_{j}-1}{2} & \lambda_j $ is odd and $ j > i_{k}\end{cases}\]
We show $c^{\lambda}_{\mu,\mu}>0$ by giving an explicit
ballot filling of $\lambda/\mu$ with content $\mu$ (see~Section~\ref{sec:LR}).

For $\lambda_i$ even, fill in the rightmost $\frac{\lambda_i}{2}$ boxes with $i$. 
For a row $i_j$ of $\lambda$ with an odd number of boxes, fill in the rightmost $\frac{\lambda_{i_j}-1}{2}$ boxes in the row  with $i_j$. There are $\frac{\lambda_{i_j}-1}{2}$ boxes in each of the top $k$ rows with odd parts. Hence those boxes are entirely filled. There are
$\frac{\lambda_{i_j}+1}{2}$ boxes in each of the bottom $k$ rows of odd parts. For these rows, one
box remains unfilled by the above step. Fill in the empty box in row $i_{k+j}$ with $i_j$; for the purposes of discussion below, we will call this box \emph{extraordinary}. 
It will also be convenient to call indices $j$ \emph{$\lambda$-even} if $\lambda_j$ is even,
\emph{$\lambda$-top-odd} if $\lambda_{j}$ is odd and $j\leq i_k$, and $\emph{$\lambda$-bottom-odd}$
otherwise.
Let $T$ be this filling. (See Example~\ref{exa:April16abc} below.) We must check three things:

\noindent
($T$ is semistandard): By construction, $T$ is row-semistandard. It remains to show column strictness. This is clear
  when comparing adjacent rows $j$ and $j+1$ that are either $\lambda$-even, or $\lambda$-top-odd,
 since those only use those labels
 in their respective rows. If either row is bottom-odd, notice that any extraordinary box is either directly beneath an empty  square or another extraordinary box. Since extraordinary boxes are labeled
 in strictly increasing  from top to bottom, we are done.
 
\noindent
($T$ has content $\mu$): If $j$ is $\lambda$-even, then 
$\mu_j=\frac{\lambda_j}{2}$ and there are that many $j$'s in row $j$ of $T$ (and nowhere else). Otherwise, if $j$ is $\lambda$-top-odd then we are deficient one label of $j$ in that row. By construction, this missing $j$ appears in row $i_{k+j}$.

\noindent
($T$ is ballot):  If $j$ is $\lambda$-even, the ballotness holds since all $j$'s appear
in row $j$ and all $j+1$'s appear in the row $j+1$ or further south, and since
$\mu_j\geq \mu_{j+1}$. Next, suppose $j+1$ (but not $j$) is $\lambda$-even. Hence $\lambda_{j+1}<\lambda_j$ and row $j$
of $T$ will contain 
$\frac{\lambda_j-1}{2}\geq \frac{\lambda_{j+1}}{2}$ many $j$'s; these
$j$'s will be read before the $\frac{\lambda_{j+1}}{2}$-many $j+1$'s of $T$, which appear
only in row $j+1$. Similarly, we are done if $j$ and $j+1$ are both
$\lambda$-bottom-odd, or (since extraordinary boxes' labels increase top-down) if
both are $\lambda$-top-odd. Finally, say
$j$ is $\lambda$-top-odd, $j+1$ is $\lambda$-bottom-odd.
Then row $j$ of $T$ has $\frac{\lambda_j-1}{2}$ many $j$'s and 
all $\frac{\lambda_{j+1}-1}{2}(\leq \frac{\lambda_{j}-1}{2})$ many $j+1$'s appear in row $j+1$ of $T$, so
ballotness follows.
\qed

In view of Lemma~\ref{thelemma}, Claim~\ref{theclaim} completes the proof of the
theorem.\qed

\begin{example}\label{exa:April16abc}
To illustrate the proof of Claim~\ref{theclaim}, let 
$\lambda=(14,11,10,8,8,7,6,6,5,5,4,3,2,1)$. 
Hence
$2k=6$, $(i_1,i_2,i_3,i_4,i_5,i_6)=(2,6,9,10,12,14)$, and 
$\mu=(7,6,5,4,4,4,3,3,3,2,2,1,1,0)$. 
In this case,
$T$ is
\[\tableau{X& X & X & X& X& X& X& 1& 1&1&1&1 & 1 &1 \\
X&X&X&X&X&X&2&2&2&2&2\\
X&X&X&X&X&3&3&3&3&3\\
X&X&X&X&4&4&4&4\\
X&X&X&X&5&5&5&5\\
X&X&X&X&6&6&6\\
X&X&X&7&7&7\\
X&X&X&8&8&8\\
X&X&X&9&9\\
X&X&{\bf 2}&10&10\\
X&X&11&11\\
X&{\bf 6}&12\\
X&13\\
{\bf 9}
}\]
where we have boldfaced the labels in the exceptional boxes.
\qed
\end{example}

Given a partition $\lambda=(\lambda_1, \lambda_2, \ldots)$ let $k\lambda=(k\lambda_1, k\lambda_2,\ldots)$. Theorem~\ref{thm:main} combined with ({\ref{eqn:triplelambdas})
implies:

\begin{corollary}
\label{cor:triplelambdas}
If $|\lambda|\equiv 0 \ ({\mathrm{mod}}\  2)$ then $N_{\lambda,\lambda,\lambda}>0 \iff
N_{k\lambda, k\lambda, k\lambda}>0$ for all $k\in {\mathbb Z}_{\geq 1}$.
\end{corollary}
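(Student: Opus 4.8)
The plan is to leverage the equivalence \eqref{eqn:triplelambdas} to convert the claim into a statement purely about Newell--Littlewood numbers, and then observe that Theorem~\ref{thm:main} already does all the work in the forward direction, with the reverse direction being essentially trivial for a fixed parity. Concretely: since $|\lambda|\equiv 0 \pmod 2$ implies $|k\lambda|\equiv 0\pmod 2$ for every $k$, the parity hypothesis is preserved under scaling. By Theorem~\ref{thm:main} combined with \eqref{eqn:triplelambdas}, the condition $N_{\lambda,\lambda,\lambda}>0$ holds for \emph{every} $\lambda\in{\sf Par}_n$ of even size --- in particular for $\lambda$ itself and for each $k\lambda$. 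Hence both sides of the claimed biconditional are simply ``true'', and the equivalence holds vacuously.

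The first step I would carry out is to record the parity observation: $|k\lambda| = k|\lambda|$, so $|\lambda|$ even forces $|k\lambda|$ even for all $k\geq 1$. Second, I would invoke Theorem~\ref{thm:main} in the form it is stated (via \eqref{eqn:triplelambdas}, which says $\rho=\otimes^3$ detects ${\mathbb S}_{[\lambda]}(W)$ iff $N_{\lambda,\lambda,\lambda}>0$) to conclude $N_{\mu,\mu,\mu}>0$ for \emph{any} $\mu\in{\sf Par}_n$ with $|\mu|$ even. Third, I would apply this twice --- once with $\mu=\lambda$, once with $\mu=k\lambda$ --- to see that $N_{\lambda,\lambda,\lambda}>0$ and $N_{k\lambda,k\lambda,k\lambda}>0$ both hold unconditionally under the parity hypothesis, so the ``$\iff$'' is automatic.

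Strictly speaking, there is nothing to obstruct here: the ``main obstacle'' was already surmounted in proving Theorem~\ref{thm:main} (the explicit ballot filling of $\lambda/\mu$ in Claim~\ref{theclaim}). The only subtlety worth a sentence is that $k\lambda\in{\sf Par}_n$ whenever $\lambda\in{\sf Par}_n$ (scaling preserves the number of parts and weak decrease), so Theorem~\ref{thm:main} genuinely applies to $k\lambda$. I would also remark that the statement is really the ``saturation-type'' shadow of Theorem~\ref{thm:main}: the interesting content is that nonvanishing of $N_{\lambda,\lambda,\lambda}$ depends only on the parity of $|\lambda|$, and this corollary merely packages that as invariance under dilation. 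If one wanted a proof not routing through the representation-theoretic statement \eqref{eqn:triplelambdas}, one could instead apply Claim~\ref{theclaim} directly to $k\lambda$ (which has even size) to produce $\mu'$ with $c^{k\lambda}_{\mu',\mu'}>0$, hence $N_{k\lambda,k\lambda,k\lambda}\geq c^{k\lambda}_{\mu',\mu'}>0$ by \eqref{eqn:Newell-Littlewood}; but the stated route is shorter given what has already been established.
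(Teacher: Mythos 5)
Your proposal is correct and is essentially the paper's own (unwritten) argument: the authors simply state that Theorem~\ref{thm:main} combined with (\ref{eqn:triplelambdas}) implies the corollary, and the content of that implication is exactly what you spell out — the parity hypothesis is preserved under dilation, so both sides of the biconditional are unconditionally true. Your closing remark that one could instead apply Claim~\ref{theclaim} directly to $k\lambda$ is a valid (and equivalent) shortcut, but the route you take matches the paper.
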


The simplicity of this ``saturation'' statement suggested the ideas of the next
section.

\section{Polytopal results}
\label{sec:polytope}

\subsection{Newell-Littlewood polytopes} Fix $\lambda,\mu,\nu\in {\sf Par}_n$. Let $a_{i}^j, b_i^j, \gamma_{i}^j \in {\mathbb R}$
for $1\leq i,j\leq n$ and consider the linear constraints:

\begin{enumerate}
\item \emph{Non-negativity}: For all $1\leq i,j\leq n$, $\alpha_{i}^j,\beta_i^j,\gamma_i^j\geq 0$
\item \emph{Shape constraints}: For all $k$,
\begin{enumerate}
\item
$\sum_j \alpha_k^j + \sum_i \beta_i^k = \mu_k$
\item
$\sum_j \gamma_k^j + \sum_i \alpha_i^k = \nu_k$
\item
$\sum_j \beta_k^j + \sum_i \gamma_i^k = \lambda_k$
\end{enumerate}
\item \emph{Tableau/semistandardness constraints}: For all $k,l$:
\begin{enumerate}
\item
$\sum_{j} \alpha_{k+1}^j + \sum_{i\leq l} \beta_i^{k+1} \leq \sum_{j} \alpha_k^j + \sum_{i<l} \beta_i^k$
\item
$\sum_{j} \gamma_{k+1}^j + \sum_{i\leq l} \alpha_i^{k+1} \leq \sum_{j} \gamma_k^j + \sum_{i<l} \alpha_i^k$
\item
$\sum_{j} \beta_{k+1}^j + \sum_{i\leq l} \gamma_i^{k+1} \leq \sum_{j} \beta_k^j + \sum_{i<l} \gamma_i^k$
\end{enumerate}
\item \emph{Ballot constraints}: For all $k,l$:
\begin{enumerate}
\item
$\sum_{i<k} \alpha_{l}^i \geq \sum_{i\leq k} \alpha_{l+1}^{i}$
\item
$\sum_{i<k} \beta_{l}^i \geq \sum_{i\leq k} \beta_{l+1}^{i}$
\item
$\sum_{i<k} \gamma_{l}^i \geq \sum_{i\leq k} \gamma_{l+1}^{i}$
\end{enumerate}
\end{enumerate}

We define the \emph{Newell-Littlewood polytope} in ${\mathbb R}^{3n^2}$ by
\[{\mathcal P}_{\mu,\nu,\lambda}=\{(\alpha_i^j,\beta_i^j,\gamma_i^j)\in {\mathbb R}^{3n^2}: \text{(1)-(4) hold}\}.\]

\begin{theorem}
\label{thm:thepolytope}
$N_{\mu,\nu,\lambda}=\#({\mathcal P}_{\mu,\nu,\lambda}\cap {\mathbb Z}^{3n^2})$.
\end{theorem}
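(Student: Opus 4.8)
The plan is to realize the polytope $\mathcal{P}_{\mu,\nu,\lambda}$ as a ``nested'' lattice-point model that directly encodes a triple of Littlewood--Richardson tableaux $(\alpha,\beta,\gamma)$ witnessing the terms of (\ref{eqn:Newell-Littlewood}). By definition, $N_{\mu,\nu,\lambda} = \sum_{\alpha,\beta,\gamma} c_{\alpha,\beta}^\mu c_{\alpha,\gamma}^\nu c_{\beta,\gamma}^\lambda$, so I would construct a bijection between $\mathcal{P}_{\mu,\nu,\lambda} \cap \mathbb{Z}^{3n^2}$ and the set of triples $(T_1,T_2,T_3)$ where, for some partitions $\alpha,\beta,\gamma \in {\sf Par}_n$ with $|\alpha|+|\beta| = |\mu|$, etc., $T_1$ is an LR tableau of shape $\mu/\alpha$ content $\beta$ counted by $c_{\alpha,\beta}^\mu$ (here I use the conjugate/transpose symmetry $c_{\alpha,\beta}^\mu = c_{\beta,\alpha}^\mu$ to write $\mu/\beta$ with content $\alpha$ if convenient, matching the indexing of the constraints), and similarly $T_2$ for $\nu/\alpha$ content $\gamma$, and $T_3$ for $\lambda/\beta$ content $\gamma$. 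The variable $\alpha_i^j$ should be read as ``the number of entries equal to $j$ in row $i$ of the tableau $T_1$'', and likewise $\beta_i^j, \gamma_i^j$ track the row-content vectors of $T_2, T_3$.

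The key steps, in order: (i) Interpret the shape constraints (2). Constraint (2a) $\sum_j \alpha_k^j + \sum_i \beta_i^k = \mu_k$ says that row $k$ of $T_1$ has length $\mu_k$ minus the length $\alpha_k = \sum_i \beta_i^k$ of the $k$-th row of the ``inner'' shape $\alpha$ (which equals the number of $k$'s used across all rows of $T_2$, since $\alpha$ is the content of $T_2$). So (2a)–(2c) simultaneously pin down $\alpha,\beta,\gamma$ as the inner shapes and enforce that the three skew shapes are $\mu/\alpha$, $\nu/\alpha$, $\lambda/\beta$ (up to the transpose bookkeeping) with the correct total box counts. (ii) Show the semistandardness constraints (3) are exactly column-strictness. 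Writing $R_k^{(1)}(\leq l) = \sum_j \alpha_k^j + \sum_{i<l}\beta_i^k$ -- hmm, more carefully: $\sum_j \alpha_k^j$ is the length of row $k$ of $T_1$, and the partial sums encode where entry $l$ can appear; the inequality (3a) is the standard ``a semistandard tableau on a skew shape with prescribed row content vectors exists iff these partial-sum inequalities hold'', i.e. the column-strict condition translated into counting. (This is the classical fact that $x^{(1)}, x^{(2)}, \dots$ is the content-by-rows data of an SSYT of shape $\theta/\phi$ iff a system of such inequalities holds; I would either cite Stanley EC2 / RSK-type reasoning or give the two-line interlacing argument.) (iii) Show the ballot constraints (4) are exactly ballotness of the row reading word: $\sum_{i<k}\alpha_l^i \geq \sum_{i\leq k}\alpha_{l+1}^i$ says that at the end of reading row $l$ (just before row $l+1$ contributes its $(l+1)$-th... ) the count of $k$'s so far is $\geq$ the count of $(k{+}1)$'s -- and because the reading is top-to-bottom and the tableau is semistandard, it suffices to check the ballot inequality at the ends of rows, which is precisely what (4) encodes. (iv) Conclude: lattice points of $\mathcal{P}_{\mu,\nu,\lambda}$ biject with triples of LR tableaux summing, over all choices of $\alpha,\beta,\gamma$, to $\sum c_{\alpha,\beta}^\mu c_{\alpha,\gamma}^\nu c_{\beta,\gamma}^\lambda = N_{\mu,\nu,\lambda}$.

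The main obstacle I expect is getting the reduction from the full tableau data to the row-content vectors to be clean and genuinely bijective. A semistandard skew tableau is not determined by its sequence of row-content vectors in general, so the claim being used is the sharper one: among tableaux of fixed skew shape with fixed row contents, \emph{ballotness} is a condition on the row-content vectors alone, and moreover for each admissible row-content sequence satisfying (3) there is a \emph{unique} semistandard filling realizing it, or at least the ballot ones are counted correctly. In fact the right statement is that an LR tableau (semistandard + ballot) of a given skew shape and given content is determined by -- and its existence is equivalent to -- the partial-sum data, because ballotness forces the entries in each row to be ``as small as possible'', pinning the filling. Nailing this uniqueness/equivalence precisely, and checking the partial-sum inequalities (3) and (4) are set up with the correct strict-versus-weak inequalities and off-by-one index ranges (the $i<l$ versus $i\leq l$, $i<k$ versus $i\leq k$ asymmetries are doing real work), is where the care is needed; the rest is routine translation. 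A secondary point is the transpose bookkeeping: the indices in (2)–(4) treat the three tableaux slightly asymmetrically (e.g. $\beta$ appears as a content in (2a) but as an inner shape in (2c)), so I would fix once and for all which of $c_{\alpha,\beta}^\mu = c_{\beta,\alpha}^\mu$ each block of constraints is modeling and verify consistency across blocks.
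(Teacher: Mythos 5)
Your proposal is essentially the paper's proof: lattice points of ${\mathcal P}_{\mu,\nu,\lambda}$ are identified with triples of LR tableaux via their row-content matrices, with the shape constraints (2) recovering each inner shape as the content of the next tableau in the cycle, (3) encoding column-strictness, and (4) encoding ballotness checked at row ends. Two small corrections to your bookkeeping: the consistent reading of the variables is cyclic rather than the one you state ($\beta_i^j$ is the number of $i$'s in row $j$ of the tableau of shape $\mu/\alpha$ and content $\beta$, $\alpha_i^j$ plays that role for $\nu/\gamma$ with content $\alpha$, and $\gamma_i^j$ for $\lambda/\beta$ with content $\gamma$, so that, e.g., $\sum_j\alpha_k^j=\alpha_k$ in (2a)), and your worry about non-uniqueness is unfounded --- a semistandard skew filling is uniquely determined by its shape and row-content matrix since each row is a weakly increasing arrangement of a prescribed multiset, so no appeal to ballotness is needed to pin down the filling.
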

\begin{proof}
By definition, $N_{\mu,\nu,\lambda}$ is the number of LR tableaux $T,U$ and $V$ of shape $\mu/\alpha$, $\nu/\gamma$ and $\lambda/\beta$ respectively, and of
content $\beta$, $\alpha$, and $\gamma$ respectively for any choice of $\alpha$, $\beta$, and $\gamma$ in ${\sf Par}_n$. Given such a triple $(T,U,V)$   
let $\beta_{i}^j$  be the number of $i$'s in the $j^{th}$ row of the ballot filling of $T$.
Similarly, $\alpha_i^j$ and $\gamma_i^j$ are defined with respect to $U$ and $V$ respectively. It is straightforward that
$(\alpha_i^j,\beta_i^j,\gamma_i^j)$ satisfies (1)-(4). 

Conversely, suppose we are given  $(\alpha_i^j,\beta_i^j,\gamma_i^j)\in {\mathcal P}_{\mu,\nu,\lambda}$.
For $1\leq i\leq n$, let
\[\alpha_i:=\sum_{j}\alpha_i^j, \ \beta_i:=\sum_j \beta_i^j,\ \text{and \ } \gamma_i:=\sum_j \gamma_i^j.\]
Notice $\alpha:=(\alpha_1,\ldots,\alpha_n)\in {\sf Par}_n$ by 4(a). Similarly we define $\beta,\gamma\in {\sf Par}_n$.
Now construct $T$ by placing $\beta_i^j$ many $i$'s in row $j$ (indented by $\alpha_i$ many boxes), 
and order the labels in the row to be in increasing from left to right. By 2(a), $T$ is of skew shape $\mu/\alpha$. 
Conditions 3(a) and 4(b) guarantee that $T$ is an LR tableau. 
In the same way, we construct appropriate LR tableaux $U$ and $V$ using $\alpha_i^j, \gamma_i^j$ 
and $\beta$.
This correspondence $(T,U,V)\leftrightarrow (\alpha_i^j,\beta_i^j,\gamma_i^j)$ is
clearly bijective.
\end{proof}

That $N_{\lambda,\mu,\nu}$ counts lattice points in a polytope also follows from
work of A.~Berenstein-A.~Zelevinsky \cite[Section~2.2]{BZ} on the more general tensor product multiplicities, together with \cite[Corollary~2.5.3]{Koike}. Their polytopes are described in terms of root-system datum.
The above gives an \emph{ab initio} approach, similar to one seen in a preprint version of \cite{Mulmuley} for the
Littlewood-Richardson coefficients.

\subsection{Newell-Littlewood semigroups}
The \emph{Littlewood-Richardson semigroup} is
\[{\sf LR}_n=\{(\mu,\nu,\lambda)\in {\sf Par}_n^3:c_{\mu,\nu}^{\lambda}>0\};\]
see, e.g., \cite{Zelevinsky}. 
We define the \emph{Newell-Littlewood semigroup} by
\[{\sf NL}_n=\{(\mu,\nu,\lambda)\in {\sf Par}_n^3: N_{\mu,\nu,\lambda}>0\}.\] 

\begin{corollary}
\label{prop:NLsemigroup}
${\sf NL}_n$ is a semigroup. 
${\sf LR}_n$ is a subsemigroup of ${\sf NL}_n$.
\end{corollary}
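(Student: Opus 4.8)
The plan is to deduce both assertions from Theorem~\ref{thm:thepolytope}, exploiting the fact that the Newell-Littlewood polytope is defined by linear inequalities whose coefficients do not depend on $(\mu,\nu,\lambda)$, while the dependence on $(\mu,\nu,\lambda)$ enters only through the equalities in (2). First I would record the elementary observation that $(\mu,\nu,\lambda)\in{\sf NL}_n$ precisely when ${\mathcal P}_{\mu,\nu,\lambda}$ contains a lattice point, which is immediate from Theorem~\ref{thm:thepolytope} since a nonnegative integer count is positive iff it is at least $1$. So the task reduces to showing: if ${\mathcal P}_{\mu,\nu,\lambda}$ and ${\mathcal P}_{\mu',\nu',\lambda'}$ each contain a lattice point, then so does ${\mathcal P}_{\mu+\mu',\nu+\nu',\lambda+\lambda'}$.

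The key step is Minkowski additivity of these polytopes: I claim ${\mathcal P}_{\mu,\nu,\lambda}+{\mathcal P}_{\mu',\nu',\lambda'}\subseteq{\mathcal P}_{\mu+\mu',\nu+\nu',\lambda+\lambda'}$, where $+$ on the left is Minkowski sum of subsets of ${\mathbb R}^{3n^2}$. To see this, take $(\alpha_i^j,\beta_i^j,\gamma_i^j)\in{\mathcal P}_{\mu,\nu,\lambda}$ and $(\alpha_i'^j,\beta_i'^j,\gamma_i'^j)\in{\mathcal P}_{\mu',\nu',\lambda'}$ and check that their coordinatewise sum satisfies (1)--(4) for the triple $(\mu+\mu',\nu+\nu',\lambda+\lambda')$. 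Constraints (1), (3), (4) are homogeneous linear inequalities (each has the form ``linear combination of the variables $\geq 0$'' with no constant term), so they are preserved under addition of two points that each satisfy them; constraint (2) is an inhomogeneous equality of the form ``linear expression in the variables $=\mu_k$'', and adding the two instances gives exactly ``linear expression $=\mu_k+\mu_k'$'', which is constraint 2(a) for $\mu+\mu'$, and similarly for 2(b), 2(c). Now if $p$ is a lattice point of ${\mathcal P}_{\mu,\nu,\lambda}$ and $p'$ a lattice point of ${\mathcal P}_{\mu',\nu',\lambda'}$, then $p+p'$ is a lattice point of ${\mathcal P}_{\mu+\mu',\nu+\nu',\lambda+\lambda'}$; hence $N_{\mu+\mu',\nu+\nu',\lambda+\lambda'}>0$ by Theorem~\ref{thm:thepolytope}, proving ${\sf NL}_n$ is closed under coordinatewise addition, i.e. is a semigroup.

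For the second assertion I would argue similarly but more directly, or simply cite the classical fact. By Lemma~\ref{prop:easy}(II), when $|\mu|+|\nu|=|\lambda|$ we have $N_{\mu,\nu,\lambda}=c_{\mu,\nu}^{\lambda}$, so ${\sf LR}_n\subseteq{\sf NL}_n$ as sets. That ${\sf LR}_n$ is itself a semigroup is standard (it follows, e.g., from the Littlewood-Richardson rule together with the fact that a lattice point of the Berenstein-Zelevinsky-type hive/Gelfand-Tsetlin polytope for $(\mu,\nu,\lambda)$ plus one for $(\mu',\nu',\lambda')$ gives one for the sum, by the same homogeneity argument); alternatively, if $c_{\mu,\nu}^{\lambda}>0$ and $c_{\mu',\nu'}^{\lambda'}>0$ then the tensor product ${\mathbb S}_{\lambda}\otimes{\mathbb S}_{\lambda'}$ appears in ${\mathbb S}_{\mu}\otimes{\mathbb S}_{\mu'}\otimes{\mathbb S}_{\nu}\otimes{\mathbb S}_{\nu'}$, and Schur positivity of $s_{\mu}s_{\mu'}$ and $s_{\nu}s_{\nu'}$ forces $c_{\mu+\mu',\nu+\nu'}^{\lambda+\lambda'}>0$. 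Combined with the inclusion just noted, this shows ${\sf LR}_n$ is a subsemigroup of ${\sf NL}_n$.

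I expect the only real point requiring care is the verification that the coordinatewise sum of two polytope points again lies in the relevant polytope — specifically, bookkeeping that every one of the inequalities (1), 3(a)--(c), 4(a)--(c) is genuinely homogeneous (no hidden constants) so that it is additive, and that the equality constraints (2) add up correctly with the right-hand sides becoming $\mu_k+\mu_k'$ etc. This is entirely routine given the explicit form of the constraints, so there is no serious obstacle; the substance is all in Theorem~\ref{thm:thepolytope}.
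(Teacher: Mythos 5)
Your proposal is correct and follows essentially the same route as the paper: both deduce the semigroup property from Theorem~\ref{thm:thepolytope} by adding a lattice point of ${\mathcal P}_{\mu,\nu,\lambda}$ to one of ${\mathcal P}_{\mu',\nu',\lambda'}$ and observing (via homogeneity of the inequalities and additivity of the equality constraints) that the sum lies in ${\mathcal P}_{\mu+\mu',\nu+\nu',\lambda+\lambda'}$, and both obtain the ${\sf LR}_n$ statement from Lemma~\ref{prop:easy}(II). The only cosmetic difference is that the paper gets closure of ${\sf LR}_n$ for free from the just-proved closure of ${\sf NL}_n$ together with Lemma~\ref{prop:easy}(II), whereas you invoke the classical semigroup property of ${\sf LR}_n$ separately; either is fine.
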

\begin{proof}
Suppose 
$(\mu,\nu,\lambda)$ and $(\overline{\mu},\overline{\nu},\overline{\lambda})\in 
{\sf NL}_n$. 
By Theorem~\ref{thm:thepolytope}, there exists a lattice points 
\[(\alpha_i^j,\beta_i^j,\gamma_i^j)\in {\mathcal P}_{\mu,\nu,\lambda}
\text{ \ and $(\overline{\alpha}_i^j,\overline{\beta}_i^j,\overline{\gamma}_i^j)\in
{\mathcal P}_{\overline{\mu},\overline{\nu},\overline{\lambda}}$}.\]
Observe \[(\alpha_i^j,\beta_i^j,\gamma_i^j)+
(\overline{\alpha}_i^j,\overline{\beta}_i^j,\overline{\gamma}_i^j)\in 
{\mathcal P}_{\mu+\overline{\mu},\nu+\overline{\nu},\lambda+\overline{\lambda}; n}\]
is a lattice point. By Theorem~\ref{thm:thepolytope}, 
$N_{\mu+\overline{\mu},\nu+\overline{\nu},\lambda+\overline{\lambda}}>0$ and so
$(\mu+\overline{\mu},\nu+\overline{\nu},\lambda+\overline{\lambda})\in {\sf NL}_n$. Hence ${\sf NL}_n$
is a semigroup.

The remaining assertion follows from Lemma~\ref{prop:easy}(II).
\end{proof}

In turn, Corollary~\ref{prop:NLsemigroup} immediately implies

\begin{corollary}\label{cor:easydirection}
If $N_{\mu,\nu,\lambda}> 0$ then $N_{k\mu, k\nu, k\lambda} > 0$ for every 
$k\geq 1$.
\end{corollary}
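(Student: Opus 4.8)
The plan is to deduce this directly from the semigroup property established in Corollary~\ref{prop:NLsemigroup}. The point is that $(k\mu,k\nu,k\lambda)$ is nothing but the $k$-fold sum of $(\mu,\nu,\lambda)$ with itself inside the monoid ${\sf Par}_n^3$ under coordinatewise addition, so membership in ${\sf NL}_n$ is preserved.

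Concretely, I would argue by induction on $k$. The base case $k=1$ is the hypothesis $N_{\mu,\nu,\lambda}>0$, i.e.\ $(\mu,\nu,\lambda)\in {\sf NL}_n$. For the inductive step, assume $(k\mu,k\nu,k\lambda)\in{\sf NL}_n$; applying Corollary~\ref{prop:NLsemigroup} to the pair $(k\mu,k\nu,k\lambda)$ and $(\mu,\nu,\lambda)$ — both of which lie in ${\sf NL}_n$ — gives
\[
\big((k\mu)+\mu,\ (k\nu)+\nu,\ (k\lambda)+\lambda\big)=\big((k+1)\mu,(k+1)\nu,(k+1)\lambda\big)\in{\sf NL}_n,
\]
which is exactly $N_{(k+1)\mu,(k+1)\nu,(k+1)\lambda}>0$. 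This closes the induction.

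Alternatively, and even more directly, one can unwind Theorem~\ref{thm:thepolytope}: pick a lattice point $(\alpha_i^j,\beta_i^j,\gamma_i^j)\in{\mathcal P}_{\mu,\nu,\lambda}$, and observe that its $k$-fold scaling $(k\alpha_i^j,k\beta_i^j,k\gamma_i^j)$ is a lattice point of ${\mathcal P}_{k\mu,k\nu,k\lambda}$, since constraints (1)--(4) are all homogeneous linear (in)equalities in the coordinates with the right-hand sides scaling linearly in $(\mu,\nu,\lambda)$. Then Theorem~\ref{thm:thepolytope} yields $N_{k\mu,k\nu,k\lambda}\geq 1$.

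There is essentially no obstacle here — the only thing to be careful about is that this is the ``easy'' (non-saturation) direction: it produces \emph{some} solution after scaling, not a statement about the dilation factor going the other way. The genuinely hard converse (whether $N_{k\mu,k\nu,k\lambda}>0$ for some $k$ forces $N_{\mu,\nu,\lambda}>0$) is the saturation question taken up in the conjectures of item (IV), and nothing in this corollary bears on it.
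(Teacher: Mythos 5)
Your proposal is correct and matches the paper's argument: the paper derives this corollary as an immediate consequence of the semigroup property in Corollary~\ref{prop:NLsemigroup}, which is exactly your induction on $k$ spelled out. Your alternative via scaling a lattice point of ${\mathcal P}_{\mu,\nu,\lambda}$ is the same mechanism one level down, since the semigroup property itself was proved by adding lattice points of the polytopes.
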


A.~Knutson-T.~Tao \cite{KT} established the \emph{saturation property} of $c_{\mu,\nu}^{\lambda}$. That is 
\begin{equation}
\label{eqn:usualsat}
c_{\mu,\nu}^{\lambda}>0 \iff c_{k\mu,k\nu}^{k\lambda}>0,  \ \  \forall k\in {\mathbb Z}_{\geq 1}.
\end{equation}

\begin{conjecture}[Newell-Littlewood Saturation I]
\label{theconj}
Suppose $\lambda,\mu,\nu\in {\sf Par}$ such that $|\lambda|+|\mu|+|\nu|\equiv 0 \ (\!\!\!\!\mod 2)$. If 
$N_{k\mu, k\nu, k\lambda} > 0$ for some $k\geq 1$ then
$N_{\mu, \nu, \lambda}> 0$.
\end{conjecture}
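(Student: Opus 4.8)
The plan is to push the argument through the polytope of Theorem~\ref{thm:thepolytope}. The defining constraints (1)--(4) of $\mathcal{P}_{\mu,\nu,\lambda}$ are jointly homogeneous in the coordinates $(\alpha_i^j,\beta_i^j,\gamma_i^j)$ and in $(\mu,\nu,\lambda)$, so dilation gives $\mathcal{P}_{k\mu,k\nu,k\lambda}=k\,\mathcal{P}_{\mu,\nu,\lambda}$. Hence $N_{k\mu,k\nu,k\lambda}>0$ hands us a point $p\in\mathcal{P}_{\mu,\nu,\lambda}$ with $kp\in\mathbb{Z}^{3n^2}$, and in particular $\mathcal{P}_{\mu,\nu,\lambda}\neq\emptyset$. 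The goal is then to produce an \emph{integral} point of $\mathcal{P}_{\mu,\nu,\lambda}$, after which Theorem~\ref{thm:thepolytope} yields $N_{\mu,\nu,\lambda}>0$. Note $\mathcal{P}_{\mu,\nu,\lambda}$ is generally \emph{not} an integral polytope --- e.g.\ $\mathcal{P}_{(1),(1),(1)}$ is the single point $\alpha=\beta=\gamma=(\tfrac12)$ --- so the parity hypothesis, which excludes exactly such configurations, must be used.

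First I would stratify $\mathcal{P}_{\mu,\nu,\lambda}$ by the ``content'' vectors $\alpha_i:=\sum_j\alpha_i^j$, $\beta_i:=\sum_j\beta_i^j$, $\gamma_i:=\sum_j\gamma_i^j$. Summing the shape constraints (2a)--(2c) over $k$ forces, exactly as in \eqref{eqn:parity}, the sizes $|\alpha|=\tfrac12(|\mu|+|\nu|-|\lambda|)$, $|\beta|=\tfrac12(|\mu|+|\lambda|-|\nu|)$, $|\gamma|=\tfrac12(|\nu|+|\lambda|-|\mu|)$, and the parity hypothesis makes all three integers. For fixed integer partitions $\alpha,\beta,\gamma$ with these sizes, inspection of (1)--(4) shows that the resulting slice of $\mathcal{P}_{\mu,\nu,\lambda}$ is a \emph{product} of three polytopes on the disjoint variable sets $\{\beta_i^j\}$, $\{\alpha_i^j\}$, $\{\gamma_i^j\}$ --- each of which is, after relabeling, a standard Littlewood--Richardson polytope whose lattice points enumerate $c^{\mu}_{\alpha,\beta}$, $c^{\nu}_{\alpha,\gamma}$, $c^{\lambda}_{\beta,\gamma}$ respectively. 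By Knutson--Tao \cite{KT}, each such LR polytope is \emph{integral}, hence contains a lattice point as soon as it is nonempty. Consequently $\mathcal{P}_{\mu,\nu,\lambda}$ contains a lattice point if and only if there is an \emph{integral} triple $(\alpha,\beta,\gamma)$ with $c^{\mu}_{\alpha,\beta}\,c^{\nu}_{\alpha,\gamma}\,c^{\lambda}_{\beta,\gamma}>0$; the only missing ingredient is that the nonempty stratum guaranteed by the hypothesis may sit over a non-integral $(\alpha,\beta,\gamma)$.

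This reduces Conjecture~\ref{theconj} to a ``rounding in the Littlewood--Richardson cone'' problem: if nonnegative weakly-decreasing \emph{real} vectors $\alpha,\beta,\gamma$ place $(\alpha,\beta,\mu)$, $(\alpha,\gamma,\nu)$, $(\beta,\gamma,\lambda)$ simultaneously in the LR cone and $|\mu|+|\nu|+|\lambda|$ is even, then the same can be arranged with $\alpha,\beta,\gamma$ integral. Equivalently (applying \cite{KT} coordinate-wise), if $\mathcal{C}_n\subseteq\mathbb{R}^{6n}_{\geq 0}$ denotes the rational polyhedral cone cut out by the three LR-cone membership conditions, one must show that the fiber of $\mathcal{C}_n$ over the integral point $(\mu,\nu,\lambda)$ --- which by the size identities above lies in the affine slice where $|\alpha|,|\beta|,|\gamma|$ are the prescribed integers --- contains a lattice point whenever it is nonempty.

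I expect this last step to be the genuine obstacle: $\mathcal{C}_n$ is governed by Horn-type inequalities and its affine slices need not be integral polytopes, so one cannot simply invoke integrality, and the parity hypothesis must be spent here. The route I would attempt is a controlled perturbation: starting from the rational witness $(\alpha,\beta,\gamma)$, repeatedly move a single box between a subshape and its ambient shape --- push one box out of $\alpha$ into $\lambda$, or the reverse --- while staying inside $\mathcal{C}_n$, using the ``box-moving'' lemmas (Claims~\ref{claim:updown}, \ref{claim:Apr1abc}, \ref{claim:Apr1hdhd}) that already drive Theorem~\ref{thm:interval}(II)--(III); here the parity hypothesis is precisely what guarantees the total fractional defect to be cleared is correctable in increments of two boxes, in lockstep with the $\lambda^{\downarrow\downarrow}/\lambda^{\uparrow\uparrow}$ moves of Theorem~\ref{thm:interval}. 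Proving that such a rounding terminates without ever leaving $\mathcal{C}_n$ is the crux, and is why the statement is posed as a conjecture; it is currently known for $\mu=\nu=\lambda$ (Corollary~\ref{cor:triplelambdas}, via Theorem~\ref{thm:main}), in the situations of Theorem~\ref{thm:satspecial}, and for $n=2$ (Corollary~\ref{n=2sat}), the last via the extended Weyl inequalities of Theorems~\ref{thm:Horn} and~\ref{prop:r=2,def=k}.
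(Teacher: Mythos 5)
You have not proved the statement, and you are candid about this: the statement is a conjecture in the paper (no proof of it exists there either), and your proposal stops exactly where the open problem begins. What you \emph{have} done correctly is the reduction. The observation that the constraints defining ${\mathcal P}_{\mu,\nu,\lambda}$ are jointly homogeneous, so that ${\mathcal P}_{k\mu,k\nu,k\lambda}=k{\mathcal P}_{\mu,\nu,\lambda}$ and the hypothesis yields a rational point of ${\mathcal P}_{\mu,\nu,\lambda}$, is precisely the paper's own remark in Section~\ref{sec:complexity}, where Conjecture~\ref{theconj} is shown to be equivalent to ``$N_{\mu,\nu,\lambda}\neq 0\iff {\mathcal P}_{\mu,\nu,\lambda}\neq\emptyset$''. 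Your stratification by the content vectors $(\alpha,\beta,\gamma)$, the decoupling of the slice into a product of three single-coefficient polytopes, and the use of \cite{KT} to handle each factor are all sound (one quibble: Knutson--Tao gives \emph{saturation}, not integrality of the polytope; the consequence you actually need --- nonempty over integral data implies a lattice point --- follows by scaling a rational point and then applying saturation, but calling the polytope ``integral'' overstates what is known). The size and parity bookkeeping for $|\alpha|,|\beta|,|\gamma|$ is also correct and matches (\ref{eqn:parity}).

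The genuine gap is the rounding step, and it is not a small one: it is the entire content of the conjecture. Your proposed mechanism --- perturbing a fractional witness $(\alpha,\beta,\gamma)$ by ``box moves'' using Claims~\ref{claim:updown}, \ref{claim:Apr1abc}, \ref{claim:Apr1hdhd} --- does not obviously get off the ground, because those claims are statements about integral partitions and LR tableaux; they give no way to act on a non-integral point of the cone, and no invariant is proposed that would show a rounding process terminates inside the cone. Note also that the paper's actual partial results do not go through rounding at all: Corollary~\ref{cor:triplelambdas} comes from an explicit tableau construction (Theorem~\ref{thm:main}), Theorem~\ref{thm:satspecial} constructs a witness $\overline{\gamma}$ directly from the hypothesis $N_{k\mu,k\nu,k\lambda}>0$ via Pieri-type combinatorics, and Corollary~\ref{n=2sat} uses the linear-inequality characterization of ${\sf NL}_2$ (Theorem~\ref{prop:r=2,def=k}), for which homogeneity of the inequalities does the scaling for free. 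So your reduction is a correct reformulation of the problem, but it does not constitute progress beyond what the paper already records, and the conjecture remains open after your argument.
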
 

We checked Conjecture~\ref{theconj} exhaustively for $\lambda,\mu,\nu$ with
$1\leq |\lambda|,|\mu|,|\nu|\leq 8$ and $k=2,3$ as well as many other examples.
The necessity of the parity hypothesis is Lemma~\ref{prop:easy}(V).

This is an \emph{a priori} stronger version of Conjecture~\ref{theconj}:

\begin{conjecture}[Newell-Littlewood Saturation II]
\label{theconj'}
Under the hypotheses of Conjecture~\ref{theconj},
if $N_{k\mu,k\nu, k\lambda}>0$ then there exists $\alpha,\beta,\gamma\in {\sf Par}$
such that $c^{k\mu}_{k\alpha,k\beta}c^{k\nu}_{k\alpha,k\gamma}c^{k\lambda}_{k\beta,k\gamma}>0$.
\end{conjecture}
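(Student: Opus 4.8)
The plan is to prove the stronger Conjecture~\ref{theconj'}; it implies Conjecture~\ref{theconj} immediately, since if $c^{k\mu}_{k\alpha,k\beta}c^{k\nu}_{k\alpha,k\gamma}c^{k\lambda}_{k\beta,k\gamma}>0$ then three applications of Knutson--Tao saturation (\ref{eqn:usualsat}) give $c^{\mu}_{\alpha,\beta}c^{\nu}_{\alpha,\gamma}c^{\lambda}_{\beta,\gamma}>0$, so that $(\alpha,\beta,\gamma)$ witnesses $N_{\mu,\nu,\lambda}>0$. First I would recast everything via Theorem~\ref{thm:thepolytope}. The constraints (1)--(4) cutting out $\mathcal{P}_{\mu,\nu,\lambda}$ are homogeneous of degree one jointly in $(\mu,\nu,\lambda)$ and in the coordinates, so $\mathcal{P}_{k\mu,k\nu,k\lambda}=k\,\mathcal{P}_{\mu,\nu,\lambda}$; and under the bijection proving Theorem~\ref{thm:thepolytope}, any lattice point of $\mathcal{P}_{\mu,\nu,\lambda}$ yields $\alpha_i=\sum_j\alpha_i^j$, $\beta_i=\sum_j\beta_i^j$, $\gamma_i=\sum_j\gamma_i^j$ with $c^{\mu}_{\alpha,\beta}c^{\nu}_{\alpha,\gamma}c^{\lambda}_{\beta,\gamma}>0$, whose $k$-fold dilation realizes $c^{k\mu}_{k\alpha,k\beta}c^{k\nu}_{k\alpha,k\gamma}c^{k\lambda}_{k\beta,k\gamma}>0$. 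Therefore Conjecture~\ref{theconj'} is equivalent to the single assertion: \emph{if $\mathcal{P}_{\mu,\nu,\lambda}\neq\emptyset$ and $|\mu|+|\nu|+|\lambda|$ is even, then $\mathcal{P}_{\mu,\nu,\lambda}$ contains a lattice point}; equivalently, ${\sf NL}_n$ is exactly the set of lattice points of the rational cone $\mathbb{R}_{\geq 0}{\sf NL}_n$ satisfying this parity congruence.

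Next I would try to exploit that $\mathcal{P}_{\mu,\nu,\lambda}$ is a \emph{fiber product of three Littlewood--Richardson polytopes}. For $\pi,\sigma,\tau\in{\sf Par}_n$, let $\mathcal{P}_{\mathrm{LR}}(\pi;\sigma,\tau)$ be the polytope whose lattice points are the LR tableaux counted by $c^{\pi}_{\sigma,\tau}$; as an affine image of a Knutson--Tao hive polytope it is nonempty if and only if $c^{\pi}_{\sigma,\tau}>0$, so it is \emph{saturated}. After the substitution $\alpha_i=\sum_j\alpha_i^j$ (and likewise for $\beta,\gamma$), the polytope $\mathcal{P}_{\mu,\nu,\lambda}$ is the set of triples of points of $\mathcal{P}_{\mathrm{LR}}(\mu;\alpha,\beta)$, $\mathcal{P}_{\mathrm{LR}}(\nu;\alpha,\gamma)$, $\mathcal{P}_{\mathrm{LR}}(\lambda;\beta,\gamma)$ that agree on the shared partitions $\alpha,\beta,\gamma$. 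The parity hypothesis is exactly what forces the common sizes $|\alpha|=\tfrac12(|\mu|+|\nu|-|\lambda|)$, $|\beta|=\tfrac12(|\mu|+|\lambda|-|\nu|)$, $|\gamma|=\tfrac12(|\nu|+|\lambda|-|\mu|)$ to be integers. So the task is: starting from a rational point of this fiber product whose three forced boundary sizes are integral, promote it to an honest lattice point by moving along the fibers over the three gluing edges.

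The hard part will be precisely this gluing step: fiber products of saturated polytopes over a common linear projection need not be saturated, so one cannot invoke Knutson--Tao as a black box for the glued object, and the parity defect really is present (a nonempty $\mathcal{P}_{\mu,\nu,\lambda}$ genuinely has no lattice point when $|\mu|+|\nu|+|\lambda|$ is odd). I would attack it using more than bare saturation of the hive polytope: the fine structure of the Littlewood--Richardson cone --- honeycombs, the Knutson--Tao rigidity and deformation theory, Klyachko's inequalities --- should yield a ``relative'' statement controlling the boundary map $\mathcal{P}_{\mathrm{LR}}(\pi;\sigma,\tau)\to(\sigma,\tau)$, after which one could try to propagate integrality around the triangle of edges $\alpha,\beta,\gamma$ by an exchange/toggling argument in the spirit of the chains used to prove Theorem~\ref{thm:interval}, clearing the fractional parts one edge at a time while staying inside the polytope. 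A complementary line is to transplant Derksen--Weyman's quiver--semi-invariant proof of (\ref{eqn:usualsat}): since $N_{\mu,\nu,\lambda}$ is a stable tensor multiplicity for the folded types $B,C,D$ by \cite[Corollary~2.5.3]{Koike}, one might realize it as a dimension of semi-invariants for a quiver carrying the relevant symmetry and rerun that argument, with the parity congruence again the delicate ingredient. I expect the full conjecture to require genuinely new input; the realistic near-term targets are the cases already isolated --- extending Corollary~\ref{cor:triplelambdas}, Theorem~\ref{thm:satspecial}, and the $n=2$ statement Corollary~\ref{n=2sat} --- via this polytope/gluing picture.
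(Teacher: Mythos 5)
The statement you were asked to prove is a \emph{conjecture} in the paper: Conjecture~\ref{theconj'} is left open there. The authors only establish (i) its equivalence with Conjecture~\ref{theconj} --- the direction you give (Knutson--Tao saturation applied to each factor) is their ``$\Leftarrow$''; their ``$\Rightarrow$'' uses the semigroup property of Littlewood--Richardson coefficients --- and (ii) special cases: $\mu=\nu=\lambda$ (Corollary~\ref{cor:triplelambdas}), one of the partitions a single row or column (Theorem~\ref{thm:satspecial}), and $n=2$ (Corollary~\ref{n=2sat}). So there is no proof in the paper to compare against, and your submission, by your own admission, is not one either.

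What you do establish is a correct and useful reduction, which in fact coincides with the paper's own observation in Section~\ref{sec:complexity}: since $k\mathcal{P}_{\mu,\nu,\lambda}=\mathcal{P}_{k\mu,k\nu,k\lambda}$ and lattice points of $\mathcal{P}_{\mu,\nu,\lambda}$ biject with witnesses $(\alpha,\beta,\gamma)$, the conjecture is equivalent to: \emph{if $\mathcal{P}_{\mu,\nu,\lambda}\neq\emptyset$ and the parity condition holds, then $\mathcal{P}_{\mu,\nu,\lambda}$ contains a lattice point} (a lattice point dilates to one of $\mathcal{P}_{k\mu,k\nu,k\lambda}$ with row sums $k\alpha,k\beta,k\gamma$, which returns the witness in the dilated form required by Conjecture~\ref{theconj'}). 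The genuine gap is everything after that. The ``gluing step'' --- promoting a rational point of the fiber product of three saturated LR/hive polytopes over the shared partitions $\alpha,\beta,\gamma$ to a lattice point --- is precisely the open problem, and you give no argument for it: you correctly note that fiber products of saturated polytopes need not be saturated, and then list candidate toolkits (honeycomb deformation theory, Derksen--Weyman semi-invariants) without developing any of them to the point of handling even one new case. As written, the proposal proves nothing beyond the equivalences and special cases the paper already records; to turn it into a contribution you would need to execute the gluing argument, at minimum in a family not covered by Theorem~\ref{thm:satspecial} or Corollary~\ref{n=2sat}.
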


\begin{proposition}
Conjectures~\ref{theconj} and~\ref{theconj'} are equivalent.
\end{proposition}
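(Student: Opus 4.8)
The plan is to interpose the auxiliary assertion
$(\star)$: \emph{under the hypotheses of Conjecture~\ref{theconj}, if $N_{k\mu,k\nu,k\lambda}>0$ for some $k\ge 1$ then there exist $\alpha,\beta,\gamma\in {\sf Par}$ with $c^{\mu}_{\alpha,\beta}c^{\nu}_{\alpha,\gamma}c^{\lambda}_{\beta,\gamma}>0$}, and then to prove separately that $(\star)$ is equivalent to each of the two conjectures. Combining these gives Conjecture~\ref{theconj} $\Leftrightarrow (\star) \Leftrightarrow$ Conjecture~\ref{theconj'}.

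First I would show $(\star)\Leftrightarrow$ Conjecture~\ref{theconj}, which needs nothing beyond the defining formula (\ref{eqn:Newell-Littlewood}): by definition $N_{\mu,\nu,\lambda}>0$ exactly when some triple $(\alpha,\beta,\gamma)$ has $c^{\mu}_{\alpha,\beta}c^{\nu}_{\alpha,\gamma}c^{\lambda}_{\beta,\gamma}>0$ — this is the notion of a \emph{witness} from the proof of Lemma~\ref{prop:easy}. Hence ``there exists a witness'' and ``$N_{\mu,\nu,\lambda}>0$'' are literally the same statement, so $(\star)$ and Conjecture~\ref{theconj} assert the same thing (with the same hypotheses).

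Next I would show $(\star)\Leftrightarrow$ Conjecture~\ref{theconj'}, using the Knutson--Tao saturation property (\ref{eqn:usualsat}) applied factor by factor. If $(\star)$ holds, choose $\alpha,\beta,\gamma$ with $c^{\mu}_{\alpha,\beta}c^{\nu}_{\alpha,\gamma}c^{\lambda}_{\beta,\gamma}>0$; the ``easy'' direction of (\ref{eqn:usualsat}) — equivalently, that ${\sf LR}_n$ is a semigroup, so a witness may be scaled by $k$ — upgrades each factor to give $c^{k\mu}_{k\alpha,k\beta}c^{k\nu}_{k\alpha,k\gamma}c^{k\lambda}_{k\beta,k\gamma}>0$, which is the conclusion of Conjecture~\ref{theconj'}. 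Conversely, if Conjecture~\ref{theconj'} holds, choose $\alpha,\beta,\gamma$ with $c^{k\mu}_{k\alpha,k\beta}c^{k\nu}_{k\alpha,k\gamma}c^{k\lambda}_{k\beta,k\gamma}>0$; since each factor is individually a positive Littlewood--Richardson coefficient, the ``hard'' (Knutson--Tao) direction of (\ref{eqn:usualsat}) gives $c^{\mu}_{\alpha,\beta}c^{\nu}_{\alpha,\gamma}c^{\lambda}_{\beta,\gamma}>0$, which is the conclusion of $(\star)$.

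I do not expect a genuine obstacle here: the only substantive input is the Knutson--Tao theorem, quoted as (\ref{eqn:usualsat}), and the parity hypothesis $|\lambda|+|\mu|+|\nu|\equiv 0\ (\mathrm{mod}\ 2)$ is simply carried along unchanged (it is there only so that ``the hypotheses of Conjecture~\ref{theconj}'' make sense). The one point needing a touch of care is that, in passing from Conjecture~\ref{theconj'} to $(\star)$, one should observe that $c^{k\mu}_{k\alpha,k\beta}>0$ already forces $|k\alpha|+|k\beta|=|k\mu|$, hence $|\alpha|+|\beta|=|\mu|$, so that (\ref{eqn:usualsat}) genuinely applies to that factor, and likewise for the other two.
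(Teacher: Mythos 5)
Your proposal is correct and uses exactly the same ingredients as the paper's proof: the ``easy'' scaling of a witness via the Littlewood--Richardson semigroup property for one direction, and the Knutson--Tao saturation theorem (\ref{eqn:usualsat}) applied factor by factor for the other. The interposed assertion $(\star)$ is just a literal restatement of Conjecture~\ref{theconj}'s conclusion in terms of witnesses, so the reorganization is cosmetic and the argument matches the paper's.
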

\begin{proof}
($\Rightarrow$) Suppose $|\lambda|+|\mu|+|\nu|\equiv 0 \ (\!\!\!\mod 2)$
 and 
$N_{k\mu,k\nu, k\lambda}>0$. By Conjecture~\ref{theconj}, $N_{\mu,\nu,\lambda}> 0$. Hence by (\ref{eqn:Newell-Littlewood}) there exists $\alpha,\beta,\gamma$
such that $c_{\alpha,\beta}^{\mu}, c_{\alpha,\gamma}^{\nu}, c_{\beta,\gamma}^{\lambda}$
are all nonzero. By the semigroup property for Littlewood-Richardson coefficients (Corollary~\ref{prop:NLsemigroup}),
$c_{k\alpha,k\beta}^{k\mu}, c_{k\alpha,k\gamma}^{k\nu}, c_{k\beta,k\gamma}^{k\lambda}$
are also nonzero, as asserted by Conjecture~\ref{theconj'}.

($\Leftarrow$) This holds by (\ref{eqn:Newell-Littlewood}) and 
saturation of the Littlewood-Richardson coefficients (\ref{eqn:usualsat}).
\end{proof}

There has been significant interest in the saturation problem for tensor products of
irreducibles for complex semisimple algebraic groups. Suppose $\mu,\nu,\lambda$ of dominant weights and corresponding irreducibles $V_{\mu}$, $V_{\nu}$
\and $V_{\lambda}$. Let
\[V_{\mu}\otimes  V_{\nu}=\bigoplus_{\lambda} V_{\lambda}^{\oplus m_{\mu,\nu}^{\lambda}}.\]
The aformentioned problem is, if we assume $\mu+\nu-\lambda$ is in the root lattice, is 
\[m_{\mu,\nu}^{\lambda}\neq 0 \iff m_{k\mu,k\nu}^{k\lambda}\neq 0, \ \ \forall k\geq 1?\]

In type $A$, $m_{\mu,\nu}^{\lambda}$ is a Littlewood-Richardson coefficient, and (\ref{eqn:usualsat}) provides an affirmative answer. The answer is negative
for types $B$ and $C$, and is conjectured to be true for all simply-laced types, and
in particular, type $D$. The state of the art is that the type $D$ conjecture is proved for type $D_4$ by
M.~Kapovich-S.~Kumar-J.~J.~Milson \cite{KKM} and more recently by J.~Kiers for 
$D_5,D_6$
\cite{Kiers} (which we refer to for more references). 

Conjecture~\ref{theconj} suggests
that saturation should hold in types $B$ and $C$ at least in the stable range and
under the parity hypothesis. In view of \cite[Theorem~2.3.4]{Koike}, the $D_n$ conjecture should imply Conjecture~\ref{theconj} (taking into account the parity vs root-lattice hypotheses); we thank J.~Kiers for pointing this out (private communication). 
We emphasize that Conjecture~\ref{theconj'}  permits a different approach than \cite{KKM,Kiers} for the
cases at hand. For example, in addition to the infinite family of cases provided by Corollary~\ref{cor:triplelambdas}, we
have:
\begin{theorem}\label{thm:satspecial}
Conjecture~\ref{theconj'} is true if one of $\lambda,\mu,\nu$ is a single row or 
a single column.
\end{theorem}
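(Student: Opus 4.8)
The plan is to reduce to the Pieri-type rule (Proposition~\ref{thm:Pieri}) and then exploit the saturation property (\ref{eqn:usualsat}) of the Littlewood–Richardson coefficients together with their semigroup property. By the $\mathfrak S_3$-symmetry of Lemma~\ref{prop:easy}(I), I may assume $\nu=(p)$ is a single row; the single-column case then follows immediately via conjugation, Lemma~\ref{prop:easy}(VI), since conjugating $\mu,\nu,\lambda$ turns a single row into a single column and preserves Newell–Littlewood positivity. So it suffices to prove: if $N_{k\mu,(kp),k\lambda}>0$ for some $k\geq 1$ and the parity hypothesis holds, then there exist $\alpha,\beta,\gamma$ with $c^{k\mu}_{k\alpha,k\beta}c^{(kp)}_{k\alpha,k\gamma}c^{k\lambda}_{k\beta,k\gamma}>0$.

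First I would unwind what $N_{k\mu,(kp),k\lambda}>0$ says via the proof of Proposition~\ref{thm:Pieri}. Applied to the triple $(k\mu,(kp),k\lambda)$, equation (\ref{eqn:parity}) forces $\alpha=\bigl(\tfrac{k|\mu|+kp-k|\lambda|}{2}\bigr)$ and $\gamma=\bigl(\tfrac{k|\lambda|+kp-k|\mu|}{2}\bigr)$ — both single rows — and by (\ref{eqn:jdjdj}) there is a partition $\widehat\beta$ obtained from $k\mu$ by deleting a horizontal strip of $\tfrac{k(|\mu|+p-|\lambda|)}{2}$ boxes and then by adding a horizontal strip of $\tfrac{k(|\lambda|+p-|\mu|)}{2}$ boxes to reach $k\lambda$. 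The key point is that $k\mu$ and $k\lambda$ are themselves "$k$-divisible" partitions, and I want to argue that $\widehat\beta$ can be chosen to be $k\beta$ for some partition $\beta$, with the strip-removal from $\mu$ and strip-addition to $\lambda$ each "dividing by $k$". Concretely: I would translate the horizontal-strip conditions $c^{k\mu}_{(r),\widehat\beta}\in\{0,1\}$ and $c^{k\lambda}_{\widehat\beta,(s)}\in\{0,1\}$ (with $r=\tfrac{k(|\mu|+p-|\lambda|)}2$, $s=\tfrac{k(|\lambda|+p-|\mu|)}2$) into the interlacing inequalities $\mu_{i+1}\le (\widehat\beta/k)_i\le \mu_i$ and $\lambda_{i+1}\le (\widehat\beta/k)_i\le\lambda_i$ (after dividing all coordinates by $k$, which is legitimate since we only need a real solution first), observe the feasible region for $\widehat\beta/k$ is a product of intervals with rational — indeed, because $\mu,\lambda$ are integral and $r/k,s/k$ are integers under the parity hypothesis, integral — endpoints, pick an integral point $\beta$ in it, and set $\widehat\beta=k\beta$. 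Then $c^{k\mu}_{(r),k\beta}=1$ and $c^{k\lambda}_{k\beta,(s)}=1$ by the Pieri rule.

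It remains to package this back up. With $\alpha:=\bigl(\tfrac{|\mu|+p-|\lambda|}{2}\bigr)$, $\gamma:=\bigl(\tfrac{|\lambda|+p-|\mu|}{2}\bigr)$ (single rows, integral by the parity hypothesis $|\mu|+|\nu|+|\lambda|\equiv0$), and $\beta$ as above, I have $c^{k\mu}_{k\alpha,k\beta}=1$, $c^{k\lambda}_{k\beta,k\gamma}=1$, and $c^{(kp)}_{k\alpha,k\gamma}=1$ — the last because $k\alpha$ and $k\gamma$ are single rows with $k\alpha_1+k\gamma_1=kp$, so their LR product contains $(kp)$ with multiplicity one by (\ref{eqn:usualPieri}). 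This exhibits the witness demanded by Conjecture~\ref{theconj'}. The main obstacle I anticipate is the middle step: verifying carefully that the interlacing/feasibility region for $\widehat\beta/k$ actually contains a lattice point, i.e.\ that the lower bounds $\max(\mu_{i+1},\lambda_{i+1})$ never exceed the upper bounds $\min(\mu_i,\lambda_i)$ coordinatewise and that the strip-size constraints $\sum_i\bigl((\widehat\beta/k)_i\bigr)$ hitting the prescribed values $|\mu|-r/k$ and $|\lambda|-s/k$ can be met simultaneously with integrality — this is exactly where positivity of $N_{k\mu,(kp),k\lambda}$ (which guarantees a \emph{real} point exists, namely $\widehat\beta$ divided by $k$) must be leveraged to conclude a scaled-down integral point exists. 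Since the constraint matrix here is essentially that of a horizontal-strip (interval) polytope, which is unimodular, this should go through cleanly, but it is the step that needs the actual argument rather than a citation.
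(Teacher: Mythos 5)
Your single-row case is essentially correct. Forcing $\widehat\alpha=(r)$ and $\widehat\gamma=(s)$ via (\ref{eqn:parity}), translating the two Pieri conditions on $\widehat\beta$ into the box constraints $\max(k\mu_{i+1},k\lambda_{i+1})\le\widehat\beta_i\le\min(k\mu_i,k\lambda_i)$ plus a single integral sum constraint, and then extracting a lattice point of the $k$-scaled-down system (whose bounds and target sum are integral by the parity hypothesis, and whose feasibility is witnessed by $\widehat\beta/k$) is a clean route; note also that any point of that box is automatically a partition since $\beta_{i+1}\le\min(\mu_{i+1},\lambda_{i+1})\le\max(\mu_{i+1},\lambda_{i+1})\le\beta_i$. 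This is close in spirit to, and arguably tidier than, the paper's sketch of that case, which likewise rests on the Pieri-rule consequence $|\nu'_i-\lambda'_i|\le 1$.

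The gap is the single-column case, which you dismiss as following ``immediately via conjugation.'' Conjugation does not commute with dilation: if $\nu=(1^t)$ then $k\nu=(k^t)$ and $(k\nu)'=(t^k)$, which is not of the form $k\tilde\nu$ for any partition $\tilde\nu$ (compare $(2\cdot(2,1))'=(4,2)'=(2,2,1,1)$ with $2\cdot(2,1)'=(4,2)$). So conjugating the hypothesis $N_{k\mu,\,k(1^t),\,k\lambda}>0$ does \emph{not} produce an instance of the single-row hypothesis, and your reduction collapses. This case is genuinely harder: a witness for $N_{k\mu,(k^t),k\lambda}>0$ has $\widehat\alpha,\widehat\beta\subseteq(k^t)$, i.e.\ arbitrary partitions with parts at most $k$ rather than single rows, so the Pieri/interlacing structure you exploit is unavailable. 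The paper spends nearly all of its proof here: it uses $\alpha_1,\beta_1\le k$ together with a row-bound lemma to deduce $|\nu_i-\lambda_i|\le 1$, a pigeonhole count showing at least $\tfrac{t-|\nu\Delta\lambda|}{2}$ rows satisfy $\nu_i=\lambda_i>0$, and an explicit $\overline\gamma\subseteq\nu\wedge\lambda$ yielding a witness for the unscaled triple (which suffices by the semigroup property of Littlewood--Richardson coefficients). You would need to supply an argument of this kind for the single-column case before the theorem is proved.
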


\begin{proof}
Suppose one of $\lambda,\mu,\nu$ is a single column.
By Lemma~\ref{prop:easy}(I), we may suppose $\mu = (1^t)$.
By assumption, there exists $\alpha,\beta,\gamma$ such that $c^{(k^t)}_{\alpha,\beta},c^{k\nu}_{\alpha,\gamma},c^{k\lambda}_{\beta,\gamma} > 0$.  For convenience, let $[\lambda/\mu]_i$ be the number of boxes
of the $i$-th row of the skew shape $\lambda/\mu$.

\begin{lemma}
\label{prop:rowbound}
If $c^{\lambda}_{\mu,\nu}>0$, then $[\lambda/ \mu]_i \leq \nu_1$ for all $i$.
\end{lemma}
\noindent\emph{Proof of Lemma~\ref{prop:rowbound}:}
Since $c^{\lambda'}_{\mu',\nu'} = c^{\lambda}_{\mu,\nu} > 0$, there is a LR tableau $T$ of $\lambda'/ \mu'$ of content $\nu'$.  The labels of boxes in a given column $C$ of $T$ are distinct. Hence $\#C\leq \ell(\nu')$ and the lemma follows.\qed

The fact $c_{\alpha,\beta}^{(k^t)}>0$ implies that $\alpha,\beta \subseteq (k^t)$ and
hence $\alpha_1,\beta_1 \leq k$. So by Lemma \ref{prop:rowbound},
\begin{equation}
\label{eqn:May12bbb}
[(k\lambda)/ \gamma]_i, [(k\nu)/ \gamma]_i\leq k, \forall i.
\end{equation}
Since $\gamma\subseteq k\nu\wedge k\lambda$, by (\ref{eqn:May12bbb}), for all $i$:
\begin{equation}
\label{eqn:May12aaa}
[(k\lambda)/ (k\nu \wedge k\lambda)]_i\leq [(k\lambda)/\gamma]_i\leq k, 
\text{ \ and  \ } [(k\nu)/(k\nu \wedge k\lambda)]_i\leq [(k\lambda)/\gamma]_i\leq k.
\end{equation}
Also, (\ref{eqn:May12aaa}) and $k\nu \wedge k\lambda = k(\nu \wedge \lambda)$ combined imply
\[ [\lambda/ (\nu \wedge \lambda)]_i, [\nu/(\nu \wedge \lambda)]_i\leq 1, \forall i;\]
that is,
\begin{equation}
\label{May4sss}
|\nu_i-\lambda_i|\leq 1.
\end{equation}
 By Theorem \ref{thm:interval} (I), 
 \[k|\nu \Delta \lambda| = |k\nu \Delta k\lambda|\leq |(k^t)| = kt,\] 
 and so $|\nu \Delta \lambda| \leq t$.  Since
$|\nu\Delta\lambda|\equiv |\nu|+|\lambda| \ (\!\!\!\!\mod 2)$ and (by hypothesis) 
\[|\nu|+|\lambda|+|(1^t)|=|\nu|+|\lambda|+t\equiv 0 \ (\!\!\!\!\mod 2),\] 
we have that $\frac{t - |\nu \Delta \lambda|}{2}\in {\mathbb Z}_{\geq 0}$.

\begin{claim}
\label{May4rrr}
There are at least $\frac{t - |\nu \Delta \lambda|}{2}$ indices $i$ such that $\nu_i = \lambda_i >0$.
\end{claim}
\noindent
\emph{Proof of Claim~\ref{May4rrr}:}
By definition of $\alpha$, $\beta$, and $\gamma$, 
\begin{align*}
kt &= |\alpha| + |\beta| \\
& = |(k\nu)/ \gamma| + |(k\lambda)/ \gamma|\\
&= |(k\nu)/ (k\nu\wedge k\lambda)| + |(k\nu\wedge k\lambda)/ \gamma| + |(k\lambda)/(k\nu\wedge k\lambda)| + |(k\nu\wedge k\lambda)/ \gamma|\\
&= |k\nu \Delta k\lambda| + 2 |(k\nu\wedge k\lambda)/ \gamma|.
\end{align*}

This is equivalent to 
\begin{equation}
\label{eqn:May5aaa}
k\left(\frac{t - |\nu \Delta \lambda|}{2}\right) = |(k\nu\wedge k\lambda)/ \gamma|.
\end{equation}

By (\ref{eqn:May12bbb}), 
\[[(k\nu\wedge k\lambda)/ \gamma]_i\leq [(k\nu)/\gamma]_i\leq k, \forall i.\]
Thus (\ref{eqn:May5aaa}) and the Pigeonhole Principle shows 
\begin{equation}
\label{eqn:May12xyy}
\#\{i: [(k\nu\wedge k\lambda)/ \gamma]_i>0\} \geq \frac{t - |\nu \Delta \lambda|}{2}.
\end{equation}
By (\ref{May4sss}), if $\nu_j\not= \lambda_j$ then $[k\nu\Delta k\lambda]_j=k$.  By (\ref{eqn:May12bbb}),
 $k\nu_j-\gamma_j, k\lambda_j-\gamma_j\leq k$.
Hence 
\begin{multline}
k \geq \max\{k\nu_j,k \lambda_j\} - \gamma_j = (\max\{k \nu_j,k \lambda_j\} -\min\{k \nu_j,k \lambda_j\})\\
+(\min\{k \nu_j,k \lambda_j\} - \gamma_j) = k+(\min\{k \nu_j,k \lambda_j\} - \gamma_j).
\end{multline}
Therefore $\min\{k\nu_j,k\lambda_j\}-\gamma_j=0$. That is,
\[[(k\nu\wedge k\lambda)/ \gamma]_j=0.\]  
As a result, $[(k\nu\wedge k\lambda)/ \gamma]_i> 0$ only if $\nu_i = \lambda_i>0$. Hence by (\ref{eqn:May12xyy}) there are at least $\frac{t - |\nu \Delta \lambda|}{2}$ many $i$
with $\nu_i=\lambda_i>0$.
\qed

By Claim~\ref{May4rrr}, we may  define $\overline{\gamma}$ to be $\nu \wedge \lambda$ with one box removed from the southmost $\frac{t - |\nu \Delta \lambda|}{2}$ rows $i$ such that $\nu_i = \lambda_i>0$. 
It follows from (\ref{May4sss}) that $\nu/{\overline\gamma}$ and $\lambda/{\overline\gamma}$ are vertical strips.
Now, since $|\nu|+|\lambda|=2|\nu\wedge\lambda|+|\nu\Delta\lambda|$,
\begin{align*}
|\nu/\overline{\gamma}|& =  |\nu|-|\nu\wedge\lambda|+\frac{t-|\nu\Delta\lambda|}{2}\\
\ & = \frac{2|\nu|-2|\nu\wedge\lambda|+t-|\nu\Delta\lambda|}{2}\\
\ & = \frac{|\nu|-|\lambda|+t+(|\nu|+|\lambda|-2|\nu\wedge\lambda|-|\nu\Delta\lambda|)}{2}\\
\ & = \frac{|\nu|-|\lambda|+t}{2}.
\end{align*}
Similarly, $|\lambda/\overline{\gamma}|=\frac{|\lambda|-|\nu|+t}{2}$. Therefore, the
(column version) of the classical Pieri rule (\ref{eqn:usualPieri}) shows that
\[((1^{(t+|\nu|-|\lambda|)/2}), (1^{(t+|\lambda|-|\nu|)/2}),\overline{\gamma})\] 
is a witness for $N_{(1^t),\nu,\lambda}>0$.

\excise{
Suppose now that one of $\lambda,\mu,\nu$ is a single row.
Again, by Lemma~\ref{prop:easy}(I), we may suppose $\mu = (t)$.
By assumption, there exists $\alpha,\beta,\gamma$ such that $c^{(kt)}_{\alpha,\beta},c^{k\nu}_{\alpha,\gamma},c^{k\lambda}_{\beta,\gamma} > 0$.  For convenience, let $[\lambda/\mu]'_i$ be the size of the the
$i$-th column of the skew shape $\lambda/\mu$.

Since $\alpha,\beta\subseteq (kt)$, $\alpha$ and $\beta$ are both single rows, and so by the Pieri rule (Proposition~\ref{thm:Pieri}),
\begin{equation}
\label{eqn:May14bbb}
[(k\lambda)/ \gamma]'_i, [(k\nu)/ \gamma]'_i\leq 1, \forall i.
\end{equation}
Since $\gamma\subseteq k\nu\wedge k\lambda$, by (\ref{eqn:May14bbb}), for all $i$:
\begin{equation}
\label{eqn:May14aaa}
[(k\lambda)/ (k\nu \wedge k\lambda)]'_i\leq [(k\lambda)/\gamma]'_i\leq 1, 
\text{ \ and  \ } [(k\nu)/(k\nu \wedge k\lambda)]'_i\leq [(k\lambda)/\gamma]'_i\leq 1.
\end{equation}
Also, (\ref{eqn:May14aaa}) implies that
\begin{equation}
\label{May14sss}
|\nu'_i-\lambda'_i|\leq 1.
\end{equation}
 By Theorem \ref{thm:interval} (I), $k|\nu \Delta \lambda| = |k\nu \Delta k\lambda|\leq |(kt)| = kt$, and so $|\nu \Delta \lambda| \leq t$.  Since
$|\nu\Delta\lambda|\equiv |\nu|+|\lambda| \ (\!\!\!\!\mod 2)$ and (by hypothesis) $|\nu|+|\lambda|+|(t)|=|\nu|+|\lambda|+t\equiv 0 \ (\!\!\!\!\mod 2)$, we have that $\frac{t - |\nu \Delta \lambda|}{2}\in {\mathbb Z}_{\geq 0}$.

\begin{claim}
\label{May14rrr}
There are at least $\frac{t - |\nu \Delta \lambda|}{2}$ indices $i$ such that $\nu'_i = \lambda'_i >0$.
\end{claim}
\begin{proof}
By definition of $\alpha$, $\beta$, and $\gamma$, 
\begin{align*}
kt &= |\alpha| + |\beta| \\
& = |(k\nu)/ \gamma| + |(k\lambda)/ \gamma|\\
&= |(k\nu)/ (k\nu\wedge k\lambda)| + |(k\nu\wedge k\lambda)/ \gamma| + |(k\lambda)/(k\nu\wedge k\lambda)| + |(k\nu\wedge k\lambda)/ \gamma|\\
&= |k\nu \Delta k\lambda| + 2 |(k\nu\wedge k\lambda)/ \gamma|.
\end{align*}

This is equivalent to 
\begin{equation}
\label{eqn:May14aaa}
k\left(\frac{t - |\nu \Delta \lambda|}{2}\right) = |(k\nu\wedge k\lambda)/ \gamma|.
\end{equation}

By (\ref{May14sss}), if $\nu'_i\not= \lambda'_i$ then for all $j$ of the form $ki + p$ for $0\leq p < k$, $[k\nu\Delta k\lambda]'_j=1$.  By (\ref{eqn:May14bbb}),
 $(k\nu)'_j-\gamma'_j, (k\lambda)'_j-\gamma'_j\leq 1$.
Hence 
\begin{multline}
1 \geq \max\{(k\nu)'_j,(k \lambda)'_j\} - \gamma'_j = (\max\{(k \nu)'_j,(k \lambda)'_j\} -\min\{(k \nu)'_j,(k \lambda)'_j\})\\
+(\min\{(k \nu)'_j,(k \lambda)'_j\} - \gamma'_j) = 1+(\min\{(k \nu)'_j,(k \lambda)'_j\} - \gamma'_j).
\end{multline}
Therefore $\min\{(k\nu)'_j,(k\lambda)'_j\}-\gamma'_j=0$. That is,
$[(k\nu\wedge k\lambda)/ \gamma]'_j=0$.  As a result, $[(k\nu\wedge k\lambda)/ \gamma]'_j> 0$ only if $\nu'_{\lfloor j/k \rfloor} = \lambda'_{\lfloor j/k \rfloor}>0$. Since every column of $\nu$ corresponds to $k$ columns of $k\nu$, by (\ref{eqn:May14aaa}) and the Pigeonhole Principle there are at least $\frac{t - |\nu \Delta \lambda|}{2}$ many $i$
with $\nu'_i=\lambda'_i>0$.
\end{proof}

By Claim~\ref{May4rrr}, we may  define $\overline{\gamma}$ to be $\nu \wedge \lambda$ with one box removed from the eastmost $\frac{t - |\nu \Delta \lambda|}{2}$ columns $i$ such that $\nu'_i = \lambda'_i>0$. 
It follows from (\ref{May14sss}) that $\nu/{\overline\gamma}$ and $\lambda/{\overline\gamma}$ are horizontal strips.
Now, since $|\nu|+|\lambda|=2|\nu\wedge\lambda|+|\nu\Delta\lambda|$,
\begin{align*}
|\nu/\overline{\gamma}|& =  |\nu|-|\nu\wedge\lambda|+\frac{t-|\nu\Delta\lambda|}{2}\\
\ & = \frac{2|\nu|-2|\nu\wedge\lambda|+t-|\nu\Delta\lambda|}{2}\\
\ & = \frac{|\nu|-|\lambda|+t+(|\nu|+|\lambda|-2|\nu\wedge\lambda|-|\nu\Delta\lambda|)}{2}\\
\ & = \frac{|\nu|-|\lambda|+t}{2}.
\end{align*}
Similarly, $|\lambda/\overline{\gamma}|=\frac{|\lambda|-|\nu|+t}{2}$. Therefore, the
classical Pieri rule (\ref{eqn:usualPieri}) shows that
$(((t+|\nu|-|\lambda|)/2), ((t+|\lambda|-|\nu|)/2),\overline{\gamma})$ is a witness for $N_{(t),\nu,\lambda}>0$.
}

The proof where one of $\mu,\nu,\lambda$ is a single row is similar to the above argument, except simpler.
Therefore we only sketch the necessary changes and leave the details to the reader. By Proposition~\ref{thm:Pieri},
we have $|\nu_i'-\lambda_i'|\leq 1$;
this is the analogue of (\ref{May4sss}). By the same reasoning, $\frac{t-|\nu\Delta\lambda|}{2}\in {\mathbb Z}_{\geq 0}$. The column version of Claim~\ref{May4rrr} states that there are at least
$\frac{t-|\nu\Delta\lambda|}{2}$ indices $i$ such that $\nu'_i=\lambda'_i>0$; it is proved using a different
Pigeonhole argument.  Given this claim, one
defines ${\widehat \gamma}$ be removing a single box from the eastmost $\frac{t-|\nu\Delta\lambda|}{2}$
columns such that $\nu'_i=\lambda_i'$. Then one concludes in the same way.\end{proof}

\subsection{Horn and (extended) Weyl inequalities}
 
Let $[n]:=\{1,2,\ldots n\}$.  For any
\[I=\{i_1<i_2< \cdots<i_d\}\subseteq [n]\]
define the partition
\[
 \tau(I):=(i_d-d\geq \cdots\geq i_2-2 \geq i_1-1).
\]
This bijects subsets of $[n]$ of cardinality $d$ with partitions whose Young
diagrams are contained in a $d\times(n-d)$ rectangle. The following combines the main
results of A.~Klyachko \cite{Klyachko} and A.~Knutson-T.~Tao \cite{KT}.

\begin{theorem}\label{thm:classicalHorn}(\cite{Klyachko}, \cite{KT})
Let $\lambda,\mu,\nu \in {\sf Par}_n$ such that
$|\lambda|+|\mu|=|\nu|$. Then
$c_{\mu,\nu}^{\lambda}>0$ if and only if
for every $d<n$, and every triple of subsets $I,J,K\subseteq [n]$ of cardinality $d$ such that $c_{\tau(I),
\tau(J)}^{\tau(K)}> 0$,
\begin{equation}\label{eq:ineq}
\sum_{k\in K}\lambda_k\leq
\sum_{i\in
I}\mu_i+\sum_{j\in J}\nu_j.\end{equation}
\end{theorem}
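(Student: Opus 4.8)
\noindent\emph{Proof proposal.} This is the resolution of Horn's conjecture, and the plan is to assemble it from two landmark ingredients: Klyachko's description of the \emph{saturated} Littlewood--Richardson semigroup by the inequalities (\ref{eq:ineq}), and the Knutson--Tao saturation theorem identifying that semigroup with $\{(\lambda,\mu,\nu):c^{\lambda}_{\mu,\nu}>0\}$ itself. First I would record that $c^{\lambda}_{\mu,\nu}$ counts lattice points of a polytope $\mathrm{LR}(\lambda,\mu,\nu)$ --- the type-$A$ specialization of Theorem~\ref{thm:thepolytope}, or the hive/honeycomb polytope --- which scales linearly, $\mathrm{LR}(N\lambda,N\mu,N\nu)=N\cdot\mathrm{LR}(\lambda,\mu,\nu)$. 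Hence $\widetilde{\Gamma}_n:=\{(\lambda,\mu,\nu)\in{\sf Par}_n^3:\mathrm{LR}(\lambda,\mu,\nu)\neq\emptyset\}$ is a rational polyhedral cone equal to the saturation of ${\sf LR}_n$, and the theorem reduces to two claims: (a) inside the hyperplane $|\lambda|+|\mu|=|\nu|$, the cone $\widetilde{\Gamma}_n$ is exactly cut out by the inequalities (\ref{eq:ineq}) over all $(I,J,K)$ with $c^{\tau(K)}_{\tau(I),\tau(J)}>0$; and (b) $\widetilde{\Gamma}_n={\sf LR}_n$, the saturation property.

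For the ``necessity'' half of (a) --- each such inequality holds on $\widetilde{\Gamma}_n$ --- I would argue geometrically via the Hermitian eigenvalue dictionary: $c^{\lambda}_{\mu,\nu}>0$ produces Hermitian matrices $A+B=C$ with spectra $\lambda,\mu,\nu$, while $c^{\tau(K)}_{\tau(I),\tau(J)}>0$ produces (by a dimension count on a Grassmannian $\mathrm{Gr}(d,n)$) a $d$-plane $U\subseteq\mathbb{C}^n$ in sufficiently general position relative to the eigenflags of $A,B,C$ that $\mathrm{tr}(C|_U)=\mathrm{tr}(A|_U)+\mathrm{tr}(B|_U)$; bounding each trace by partial eigenvalue sums then gives (\ref{eq:ineq}). (A purely combinatorial substitute overlays a hive for $(\lambda,\mu,\nu)$ with an LR filling recording the data $(I,J,K)$, in the style of Knutson--Tao--Woodward.) For the ``sufficiency'' half I would follow Klyachko: identify $\widetilde{\Gamma}_n$ with the semistable locus of a torus acting on a product of partial flag varieties, apply the Hilbert--Mumford numerical criterion, classify the destabilizing one-parameter subgroups by flags, and evaluate the numerical invariant as a Schubert-calculus pairing. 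This shows that the essential (facet) inequalities are precisely those attached to flag data $(I,J,K)$ for which that Schubert product is nonzero --- and since ``$c^{\tau(K)}_{\tau(I),\tau(J)}>0$'' is itself an instance of the same problem for $\mathrm{Gr}(d,n)$ with $d<n$, the whole statement acquires an induction-on-$n$ structure, each level reducing to the theorem at smaller rank. Finally, for (b) I would use honeycombs: a real honeycomb with integral boundary can be deformed to an integral one with the same boundary by a ``largest-lift''/compression argument exploiting the piecewise-linear structure of the honeycomb polytope, so $\mathrm{LR}(\lambda,\mu,\nu)\neq\emptyset$ forces a lattice point (alternatively one may cite Derksen--Weyman's quiver-semi-invariant proof or Kapovich--Millson's geometric one).

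The main obstacle is the sufficiency direction of (a): reproving Klyachko's theorem that the Horn data exhausts the facets of $\widetilde{\Gamma}_n$. All the genuine content lives there --- the GIT identification of the eigenvalue/Littlewood--Richardson cone with a semistable locus, the Hilbert--Mumford computation, and the inductive accounting showing no ``new'' facets appear beyond those already predicted at lower rank (Belkale's sharpening, that one may even restrict to $(I,J,K)$ with $c^{\tau(K)}_{\tau(I),\tau(J)}=1$, refines but does not bypass this). The saturation step (b), though also deep, is comparatively self-contained once the honeycomb model is set up, and the necessity step, modulo care with genericity, is essentially one traced linear identity plus an overlay of the combinatorial models.
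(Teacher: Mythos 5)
The paper does not prove this theorem at all: it is stated as a citation of Klyachko and Knutson--Tao, and your outline is an accurate roadmap of exactly those two references (Klyachko's GIT/Hilbert--Mumford description of the saturated cone plus the honeycomb saturation theorem), so you are taking the same route the paper takes, namely deferring the content to the literature. Your sketch correctly identifies where the real work lives and is consistent with the known proofs, but as you acknowledge it is a plan rather than a self-contained argument --- which is fine here, since the paper treats the result as a black box.
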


The inequalities (\ref{eq:ineq}) are  the \emph{Horn inequalities} \cite{Horn}. 

\begin{proposition}
\label{prop:Horn}
Let $\mu,\nu,\lambda\in {\sf Par}_n$ such that $N_{\mu,\nu,\lambda}>0$.  Then 
the Horn inequalities (\ref{eq:ineq}) hold.
\end{proposition}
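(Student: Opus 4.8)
The plan is to deduce Proposition~\ref{prop:Horn} directly from the definition~(\ref{eqn:Newell-Littlewood}) of $N_{\mu,\nu,\lambda}$ together with Theorem~\ref{thm:classicalHorn}. First I would observe that if $N_{\mu,\nu,\lambda}>0$, then there is a witness $(\alpha,\beta,\gamma)\in{\sf Par}_n^3$ with $c_{\alpha,\beta}^{\mu}>0$, $c_{\alpha,\gamma}^{\nu}>0$, and $c_{\beta,\gamma}^{\lambda}>0$; in particular $\alpha,\beta,\gamma$ all lie in ${\sf Par}_n$ and the size relations $|\alpha|+|\beta|=|\mu|$, $|\alpha|+|\gamma|=|\nu|$, $|\beta|+|\gamma|=|\lambda|$ hold. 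Each of these three Littlewood-Richardson coefficients, being nonzero with a matching degree condition, is a tensor product multiplicity to which Theorem~\ref{thm:classicalHorn} applies.

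Next, fix $d<n$ and subsets $I,J,K\subseteq[n]$ of cardinality $d$ with $c_{\tau(I),\tau(J)}^{\tau(K)}>0$. Applying Theorem~\ref{thm:classicalHorn} (in the appropriate symmetric form, using that $c$ is ${\mathfrak S}_3$-symmetric up to the size condition) to each of the three witnessing coefficients, I would extract three Horn-type inequalities:
\begin{align}
\sum_{k\in K}\mu_k &\le \sum_{i\in I}\alpha_i+\sum_{j\in J}\beta_j,\nonumber\\
\sum_{k\in K}\nu_k &\le \sum_{i\in I}\alpha_i+\sum_{j\in J}\gamma_j,\nonumber\\
\sum_{k\in K}\lambda_k &\le \sum_{i\in I}\beta_i+\sum_{j\in J}\gamma_j.\nonumber
\end{align}
Here one must be a little careful about which index set attaches to which partition in each triple, and possibly use the $S_3$-invariance of the Littlewood-Richardson nonvanishing condition on $(\tau(I),\tau(J),\tau(K))$ to line up the sets consistently; the point is that the same three subsets $I,J,K$ can be used throughout because the hypothesis $c_{\tau(I),\tau(J)}^{\tau(K)}>0$ is the universal one appearing in all the Horn systems.

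Then I would simply add the three displayed inequalities. The right-hand side becomes $2\sum_{i\in I}\alpha_i+2\sum_{j\in J}\beta_j+2\sum_{j\in J}\gamma_j$ — wait, more carefully: summing gives $2\sum_I\alpha + \sum_J\beta+\sum_J\gamma + \sum_I\beta\cdots$, so I would instead take the appropriate linear combination. Concretely, from the first two inequalities and suitable manipulation, or by adding all three and regrouping, one obtains $\sum_{k\in K}(\mu_k+\nu_k) \le 2\sum_{i\in I}\alpha_i + \sum_{j\in J}(\beta_j+\gamma_j)$, and combined with the third inequality $\sum_{k\in K}\lambda_k\le\sum_{i\in I}\beta_i+\sum_{j\in J}\gamma_j$ one should be able to eliminate $\alpha,\beta,\gamma$. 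The cleanest route is likely: add the $\mu$ and $\nu$ inequalities, use $\sum_I\alpha_i\ge 0$ is the wrong direction, so instead subtract — the precise bookkeeping of which of the six Horn inequalities (two per coefficient, by $S_3$-symmetry of the roles of the three partitions) to combine is the one genuinely fiddly step.

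The main obstacle I anticipate is exactly this combinatorial bookkeeping: matching up the index sets $I,J,K$ across the three Littlewood-Richardson coefficients and selecting the right subset of Horn inequalities so that the auxiliary partitions $\alpha,\beta,\gamma$ cancel and leave precisely $\sum_{k\in K}\lambda_k\le\sum_{i\in I}\mu_i+\sum_{j\in J}\nu_j$. One clean way to organize this: apply Theorem~\ref{thm:classicalHorn} to $c_{\beta,\gamma}^{\lambda}>0$ to get $\sum_K\lambda\le\sum_I\beta+\sum_J\gamma$; apply it to $c_{\alpha,\beta}^{\mu}$ with roles permuted so that $\beta$ is "read off" by $I$, giving $\sum_I\beta\le\sum_I\mu - \sum_{I}\alpha$ (using $|\alpha|+|\beta|=|\mu|$ and the complementary form of the inequality); similarly bound $\sum_J\gamma$ via $c_{\alpha,\gamma}^{\nu}$; then the $\alpha$ terms, being nonnegative, drop out with the correct sign. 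I would present the argument in this order, flag the use of $S_3$-symmetry of $c$ and of the subset/partition correspondence $\tau$, and note that no new idea beyond Theorem~\ref{thm:classicalHorn} and~(\ref{eqn:Newell-Littlewood}) is needed — this is the Newell-Littlewood shadow of the classical Horn inequalities.
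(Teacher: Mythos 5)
Your opening move is the paper's: take a witness $(\alpha,\beta,\gamma)$ and apply Theorem~\ref{thm:classicalHorn} to $c_{\beta,\gamma}^{\lambda}>0$ to get $\sum_{k\in K}\lambda_k\le\sum_{i\in I}\beta_i+\sum_{j\in J}\gamma_j$. But the finishing step — replacing $\beta,\gamma$ by $\mu,\nu$ on the right — is where your proposal has a genuine gap. Adding the three Horn systems cannot work: in the inequalities coming from $c_{\alpha,\beta}^{\mu}>0$ the partition $\mu$ sits on the \emph{left} ($\sum_{K}\mu\le\sum_I\alpha+\sum_J\beta$), while in the target it must appear on the \emph{right}, so no linear combination of same-direction Horn inequalities will cancel $\alpha,\beta,\gamma$. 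Your proposed repair, the ``complementary form'' $\sum_{i\in I}\beta_i\le\sum_{i\in I}\mu_i-\sum_{i\in I}\alpha_i$ with the same index set $I$ throughout, is false: take $\alpha=\beta=(1)$, $\mu=(1,1)$, $I=\{1\}$, where $c_{\alpha,\beta}^{\mu}=1$ but the claimed inequality reads $1\le 1-1$. (A correct complementation argument exists, but the $\alpha$-sum ends up over the set $\{n-d+1,\dots,n\}$, not over $I$, and you would still need to verify the admissibility $c_{\tau([n-d]),\tau(I^c)}^{\tau(I^c)}>0$.)

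The missing observation is much more elementary and bypasses all of this bookkeeping: $c_{\alpha,\beta}^{\mu}>0$ forces $\beta\subseteq\mu$, hence $\beta_i\le\mu_i$ for \emph{every} $i$, and likewise $c_{\alpha,\gamma}^{\nu}>0$ forces $\gamma_j\le\nu_j$ for every $j$. Substituting these termwise bounds into $\sum_{k\in K}\lambda_k\le\sum_{i\in I}\beta_i+\sum_{j\in J}\gamma_j$ immediately yields $\sum_{k\in K}\lambda_k\le\sum_{i\in I}\mu_i+\sum_{j\in J}\nu_j$. This is the paper's proof; only one application of Theorem~\ref{thm:classicalHorn} is needed, and no alignment of index sets across the three coefficients arises at all.
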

\begin{proof}
Since $N_{\mu,\nu,\lambda}> 0$, there exists $\alpha, \beta, \gamma$ such that $c_{\alpha,\beta}^\mu,c_{\alpha,\gamma}^\nu, c_{\beta, \gamma}^\lambda>0$.  

By Theorem~\ref{thm:classicalHorn},
 $(\mu, \alpha, \beta)$ satisfies the Horn inequalities (\ref{eq:ineq}).  Consider an arbitrary Horn inequality 
 associated to a triple of subsets $(I,J,K)$ as in Theorem~\ref{thm:classicalHorn}.
\[\sum_{k\in K} \lambda_k \leq \sum_{i\in I} \beta_i + \sum_{j\in J} \gamma_j.\]  Since $c_{\alpha,\gamma}^\nu >0$, $\gamma\subseteq \nu$ and so in particular $\gamma_j\leq \nu_j$ for all $j$, and similarly $\beta_i\leq \mu_i$, so 
\[\sum_{k\in K} \lambda_k \leq \sum_{i\in I} \mu_i + \sum_{j\in J} \nu_j.\] 
Hence $(\mu,\nu,\lambda)$ satisfies (\ref{eq:ineq}), as desired.
\end{proof}

Among the Horn inequalities are the  \emph{Weyl's inequalities} \cite{Weyl1912}. The latter inequalities state that a necessary
condition for $c_{\mu,\nu}^{\lambda}>0$ is
\begin{equation}
\label{eqn:Weyl}
\lambda_{i+j-1}\leq \nu_i+\mu_j \text{\ for $i+j-1\leq n$};
\end{equation}
we refer to \cite{Bhatia} and the references therein for an expository account.
When $n=2$, the Horn inequalities (\ref{eq:ineq}) and Weyl inequalities (\ref{eqn:Weyl}) coincide:
\begin{equation}
\label{eqn:ris2}
\lambda_1\leq \mu_1+\nu_1, \ \lambda_2\leq \mu_1+\nu_2, \ \lambda_2\leq \mu_2+\nu_1.
\end{equation}
Theorem~\ref{thm:classicalHorn} has been extended in a number of ways. For a
recent example, see work of N.~Ressayre \cite{Ress}, who gave inequalities
valid whenever the \emph{Kronecker coefficient} $g_{\mu,\nu,\lambda}>0$.

\begin{theorem}[Extended Weyl inequalities]
\label{thm:Horn}
Let $\mu,\nu,\lambda\in {\sf Par}_n$ and  
$1\leq k \leq i < j \leq l \leq n$, let $m = \min(i-k,l-j)$ and $M = \max(i-k,l-j)$.  If $N_{\mu,\nu,\lambda}>0$ then
\begin{equation}
\label{ijklInequality}
\mu_i - \mu_j \leq \lambda_l - \lambda_k + \nu_{m-p+1} + \nu_{M+p+2} \text{\ \ \ where $0\leq p\leq m$}.
\end{equation}
\end{theorem}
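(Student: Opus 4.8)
The plan is to run the argument of Proposition~\ref{prop:Horn} in a ``differenced'' form. Fix a witness $(\alpha,\beta,\gamma)$, so that
\[c^{\mu}_{\alpha,\beta},\ c^{\nu}_{\alpha,\gamma},\ c^{\lambda}_{\beta,\gamma}>0,\qquad |\alpha|+|\beta|=|\mu|,\ |\alpha|+|\gamma|=|\nu|,\ |\beta|+|\gamma|=|\lambda|,\]
whence $\alpha\subseteq\mu\wedge\nu$, $\beta\subseteq\mu\wedge\lambda$, $\gamma\subseteq\nu\wedge\lambda$. Since the three triples $(\mu,\alpha,\beta)$, $(\nu,\alpha,\gamma)$, $(\lambda,\beta,\gamma)$ are size-matched, each obeys the \emph{full} Horn system of Theorem~\ref{thm:classicalHorn} together with its reverse (the complementary inequalities coming from the three moves $C=A+B$, $A=C-B$, $B=C-A$ of the Hermitian model). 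In particular we have, for $c^{\mu}_{\alpha,\beta}$, the upper Weyl bounds (\ref{eqn:Weyl}) \emph{and} their complements, and similarly for the other two coefficients; these are the only tools I expect to need.

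First I would choose, for $c^{\mu}_{\alpha,\beta}>0$, an upper Weyl bound for $\mu_i$ and a complementary (lower) bound for $\mu_j$ so as to produce an estimate of the form
\[\mu_i-\mu_j\ \le\ \bigl(\alpha\text{-window}\bigr)+\bigl(\beta\text{-window}\bigr),\]
and symmetrically, from $c^{\lambda}_{\beta,\gamma}>0$,
\[\lambda_k-\lambda_l\ \le\ \bigl(\gamma\text{-window}\bigr)+\bigl(\beta\text{-window}\bigr);\]
here each ``window'' is a difference $\theta_a-\theta_{a'}$ of two entries of a single partition $\theta\in\{\alpha,\beta,\gamma\}$ at prescribed indices, hence $\ge 0$ and monotone in the right variables. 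The point is to pick the two $\beta$-windows to be complementary halves of one chain: their sum then telescopes and, $\beta$ being weakly decreasing, contributes $\le 0$ to $(\mu_i-\mu_j)+(\lambda_k-\lambda_l)$ and drops out. What remains is an $\alpha$-window plus a $\gamma$-window, which I would push up to $\nu$ using $\alpha\subseteq\nu$, $\gamma\subseteq\nu$ (or, if sharper windows are required, the Horn inequalities of $(\nu,\alpha,\gamma)$). Bookkeeping the two surviving indices and separating the cases $i-k\le l-j$ and $i-k>l-j$ (which fix which $\beta$-window is the shorter), the residual bound is precisely $\lambda_l-\lambda_k+\nu_{m-p+1}+\nu_{M+p+2}$: the lengths of the windows are forced to be $m=\min(i-k,l-j)$ and $M=\max(i-k,l-j)$ (shifted by the constants appearing in the Weyl indices), while $p\in\{0,1,\dots,m\}$ records the residual freedom in where to start the shorter window.

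The only genuine work, I expect, is this index matching — arranging that the $\beta$-windows telescope exactly while the $\alpha$- and $\gamma$-windows land on $\nu_{m-p+1}$ and $\nu_{M+p+2}$ for \emph{every} admissible $p$ — together with the boundary sub-cases in which one of $\mu_i,\mu_j,\lambda_k,\lambda_l$ vanishes, where one falls back on the plain Weyl inequality for $(\mu,\nu,\lambda)$ already furnished by Proposition~\ref{prop:Horn}. A variant that sidesteps the witness: by Theorem~\ref{thm:thepolytope}, $N_{\mu,\nu,\lambda}>0$ yields an (integral) point of ${\mathcal P}_{\mu,\nu,\lambda}$, so by Farkas duality it suffices to write $\mu_j+\lambda_l+\nu_{m-p+1}+\nu_{M+p+2}-\mu_i-\lambda_k$ as a nonnegative combination of the slacks of constraints (1), (3), (4) plus an arbitrary combination of the equalities (2); producing those multipliers is the same matching problem phrased linear-programmatically, and Lemma~\ref{prop:easy}(I) lets one first normalize which index is the one summed over.
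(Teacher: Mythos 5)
Your overall architecture matches the paper's: fix a witness $(\alpha,\beta,\gamma)$, pair an upper Weyl bound with a complementary lower bound for each of $c^{\mu}_{\alpha,\beta}$ and $c^{\lambda}_{\beta,\gamma}$ so that the $\beta$-terms cancel, then control the surviving $\alpha$- and $\gamma$-entries by $\nu$. But the decisive step is exactly the one you leave open, and neither of your proposed fallbacks works. After the telescoping one is left with $\alpha_{i-k+1}+\gamma_{l-j+1}$, and what is needed is $\alpha_{i-k+1}+\gamma_{l-j+1}\le \nu_{m-p+1}+\nu_{M+p+2}$. The containments $\alpha\subseteq\nu$, $\gamma\subseteq\nu$ give only $\nu_{m+1}+\nu_{M+1}$, whose indices sum to $m+M+2$ rather than $m+M+3$; since $\nu$ is weakly decreasing this does not imply the target (already for $p=0$ one would need $\nu_{M+1}\le\nu_{M+2}$). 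The paper instead invokes a specific Horn inequality for the triple $(\nu,\alpha,\gamma)$ padded to $n+1$ rows, attached to $I=[n+1]\setminus\{i-k+1,n+1\}$, $J=[n+1]\setminus\{l-j+1,n+1\}$, $K=[n+1]\setminus\{m-p+1,M+p+2\}$, and must verify $c^{\tau(K)}_{\tau(I),\tau(J)}=1$ (a product of two one-column shapes, so this is a Pieri computation). In particular $p$ does not record ``where to start the shorter window'' in the $\mu$- or $\lambda$-decompositions --- the left-hand side $\alpha_{i-k+1}+\gamma_{l-j+1}$ is the same for every $p$ --- it indexes a one-parameter family of Horn inequalities for $(\nu,\alpha,\gamma)$. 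Without identifying this family and checking that LR coefficient, the proof is missing its main ingredient. (The Farkas/LP reformulation you mention is, as you say, the same matching problem and does not advance matters.)

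A second problem is the direction of your $\lambda$-estimate. You propose to bound $\lambda_k-\lambda_l$ from above, which would yield the inequality in the literal form printed in the statement (with $\lambda_l-\lambda_k$ on the right). That cannot work: to cancel the $\beta_k-\beta_j$ produced on the $\mu$-side, the $\lambda$-side must put $\beta_j$ in an upper bound and $\beta_k$ in a lower bound, which forces $\lambda_l\le\beta_j+\gamma_{l+1-j}$ and $\lambda_k\ge\beta_k+\gamma_n$ --- i.e., one bounds $\lambda_l-\lambda_k$ from above, not $\lambda_k-\lambda_l$ (the other pairing would require the meaningless Weyl index $k+1-j\le 0$). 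What the argument actually proves is $\mu_i-\mu_j+\lambda_l-\lambda_k\le\nu_{m-p+1}+\nu_{M+p+2}$, i.e., $\lambda_k-\lambda_l$ on the right-hand side; this is also the form used for (\ref{ineq:lin}) and ${\sf NL}_2$, and the literal form is false in general (take $\mu=\lambda=(2,1)$, $\nu=\emptyset$, $k=i=1$, $j=l=2$).
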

\begin{proof}
Since $N_{\mu,\nu,\lambda}> 0$, there exists $\alpha, \beta, \gamma$ such that $c_{\alpha,\beta}^\mu,c_{\alpha,\gamma}^\nu, c_{\beta, \gamma}^\lambda>0$. 
By Theorem~\ref{thm:classicalHorn},  $(\mu, \alpha, \beta)$, $(\nu, \alpha, \gamma)$, $(\lambda, \beta, \gamma)$ all satisfy the Horn inequalities.  Therefore, by Weyl's inequalities (\ref{eqn:Weyl}), we have that 
\begin{equation}
\label{eqn:April18bbb}
\mu_i \leq \alpha_{i-k+1} + \beta_k
\text{ \ \ and \ \ } \lambda_l \leq \beta_j + \gamma_{l+1-j}.
\end{equation}
Additionally,
\[c^{\tau([n]\setminus\{j\})}_{\tau([n]\setminus\{j\}),\tau([n-1])}=c^{(1^{n-j})}_{(1^{n-j}),(0)} =1,\] so by 
Theorem~\ref{thm:classicalHorn} applied to $c_{\alpha,\beta}^{\mu}>0$, 
\begin{equation}
\label{eqn:April18aaa}
\sum_{a\not=j} \mu_a \leq \sum_{b\not= n} \alpha_b + \sum_{c\not= j} \beta_c.
\end{equation} 
 Subtracting (\ref{eqn:April18aaa}) from 
\[\sum_a \mu_a = \sum_b \alpha_b + \sum_c \beta_c,\] 
gives 
\begin{equation}
\label{eqn:April18ccc}
\mu_j \geq \alpha_n + \beta_j.
\end{equation}
By the same logic, 
\begin{equation}
\label{eqn:April18ddd}
\lambda_k \geq \beta_k + \gamma_n.
\end{equation}
Also, by treating $\alpha$, $\gamma$, and $\nu$ as partitions of $n+1$ rows with $\alpha_{n+1} = \gamma_{n+1} = \nu_{n+1}=0$, we have that
\[c^{\tau([n+1]\setminus\{m-p+1,M+p+2\})}_{\tau([n+1]\setminus\{i-k+1,n+1\}),\tau([n+1]\setminus\{l-j+1,n+1\})}  = c^{(2^{n-1-M-p})\cup (1^{2p+M-m})}_{(1^{n-1-(i-k)}),(1^{n-1-(l-j)})}=1.\]
Thus, Theorem~\ref{thm:classicalHorn} applied to $c_{\alpha,\gamma}^{\nu}>0$ gives
\begin{equation}
\label{eqn:April20aaa}
\sum_{a\not\in \{m-p+1,M+p+2\}} \nu_a \leq \sum_{b\not\in \{i-k+1,n+1\}} \alpha_b + \sum_{c\not\in \{l-j+1,n+1\}} \gamma_c.
\end{equation}
Subtracting (\ref{eqn:April20aaa}) from 
\[\sum_a \nu_a = \sum_b \alpha_b + \sum_c \gamma_c\]
gives
\begin{equation}
\label{eqn:April20bbb}
\alpha_{i-k+1} + \gamma_{l-j+1} = \alpha_{i-k+1} + \alpha_{n+1} + \gamma_{l-j+1} + \gamma_{n+1} \leq \nu_{m-p+1} + \nu_{M+p+2}
\end{equation}
Therefore, combining (\ref{eqn:April18bbb}),
(\ref{eqn:April18ccc}) and (\ref{eqn:April18ddd}) gives the first inequality below:
\begin{align*}
\mu_i -\mu_j + \lambda_l-\lambda_k&\leq (\alpha_{i-k+1} + \beta_k) - (\alpha_n +\beta_j)+ (\beta_j + \gamma_{l+1-j}) - (\beta_k + \gamma_n) \\
& = \alpha_{i-k+1} - \alpha_n + \gamma_{l+1-j} - \gamma_n\\
& \leq \alpha_{i-k+1} + \gamma_{l+1-j} \\
& \leq \nu_{m-p+1} + \nu_{M+p+2},
\end{align*}
where we have just applied (\ref{eqn:April20bbb}). This completes
the derivation of (\ref{ijklInequality}).
\end{proof}

\begin{corollary}
\label{cor:cyclic}
The inequalities (\ref{eq:ineq}) and (\ref{ijklInequality}),
where the roles of $(\mu,\nu,\lambda)$ are interchanged under all 
${\mathfrak S}_3$-permutations, also hold whenever $N_{\mu,\nu,\lambda}>0$.
\end{corollary}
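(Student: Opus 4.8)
The plan is to reduce everything to the already-established Proposition~\ref{prop:Horn} and Theorem~\ref{thm:Horn} by exploiting the full $\mathfrak{S}_3$-symmetry of the Newell-Littlewood numbers. Lemma~\ref{prop:easy}(I) says that $N_{\mu,\nu,\lambda}=N_{\sigma\cdot(\mu,\nu,\lambda)}$ for every $\sigma\in\mathfrak{S}_3$, where $\sigma$ acts by permuting the three partitions. Consequently the hypothesis $N_{\mu,\nu,\lambda}>0$ is equivalent to $N_{\sigma\cdot(\mu,\nu,\lambda)}>0$ for each of the six choices of $\sigma$, and this is the only ingredient beyond the cited results.

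First I would fix an arbitrary $\sigma\in\mathfrak{S}_3$ and set $(\mu',\nu',\lambda'):=\sigma\cdot(\mu,\nu,\lambda)$. By the previous remark $N_{\mu',\nu',\lambda'}>0$, so Proposition~\ref{prop:Horn} yields the Horn inequalities (\ref{eq:ineq}) for the triple $(\mu',\nu',\lambda')$, and Theorem~\ref{thm:Horn} yields the extended Weyl inequalities (\ref{ijklInequality}) for $(\mu',\nu',\lambda')$. Rewriting these in terms of the original $(\mu,\nu,\lambda)$, they are precisely the $\sigma$-permuted forms of (\ref{eq:ineq}) and (\ref{ijklInequality}). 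Letting $\sigma$ run over all of $\mathfrak{S}_3$ produces the full list of permuted inequalities asserted by the corollary; I would note in passing that since (\ref{eq:ineq}) is already symmetric in $\mu,\nu$ and (\ref{ijklInequality}) is already symmetric in $\mu,\lambda$ (after the evident relabeling of index sets), only three genuinely new families of inequalities arise in each case.

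I do not expect any real obstacle: the mathematical content lives entirely in Proposition~\ref{prop:Horn} and Theorem~\ref{thm:Horn}, and the corollary is a formal consequence of combining them with the $\mathfrak{S}_3$-invariance of $N_{\mu,\nu,\lambda}$. The only thing requiring care is the bookkeeping that ``interchanging the roles of $(\mu,\nu,\lambda)$'' in the stated inequalities matches exactly the act of applying those two results to the $\mathfrak{S}_3$-translates of the triple, which is transparent once the actions are written out.
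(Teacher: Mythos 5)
Your proposal is correct and takes essentially the same approach as the paper: the paper's proof of Corollary~\ref{cor:cyclic} is precisely the one-line combination of Lemma~\ref{prop:easy}(I) with Proposition~\ref{prop:Horn} and Theorem~\ref{thm:Horn}, which is the reduction you spell out in more detail.
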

\begin{proof}
Combine Lemma~\ref{prop:easy}(I) with Proposition~\ref{prop:Horn} and Theorem~\ref{thm:Horn}.
\end{proof}

Just as the Weyl inequalities are necessary and sufficient to characterize ${\sf LR}_2$, we now show that
the (extended) Weyl inequalities (together with symmetries given by Corollary~\ref{cor:cyclic}) are necessary and sufficient to describe ${\sf NL}_2$. 
 
\begin{theorem}\label{prop:r=2,def=k}
Suppose $\lambda,\mu, \nu\in {\sf Par}_2$ satisfies
$|\lambda|+|\mu|+|\nu|\equiv 0 \ (\!\!\!\!\mod 2)$ and
 the triangle inequalities. Then
$(\mu,\nu,\lambda)\in {\sf NL}_2$ if and only if this list of linear inequalities holds:
\begin{equation}\label{eqn:the2horn1}
\lambda_1\leq \mu_1+\nu_1, \ \nu_1\leq \lambda_1+\mu_1, \ \mu_1\leq \lambda_1+\nu_1
\end{equation}
\begin{equation}\label{eqn:the2horn2}
\lambda_2\leq \mu_1+\nu_2, \ \nu_2\leq \lambda_1+\mu_2, \ \mu_2\leq \lambda_1+\nu_2
\end{equation}
\begin{equation}\label{eqn:the2horn3}
\lambda_2\leq \mu_2+\nu_1, \ \nu_2\leq \lambda_2+\mu_1, \ \mu_2\leq \lambda_2+\nu_1
\end{equation}
\begin{align}
\label{ineq:lin}
    \nu_1 \!-\! \nu_2 \leq \mu_1 \!+\! \mu_2 + \lambda_1 - \lambda_2, \ \mu_1-\mu_2\leq \lambda_1+\lambda_2+\nu_1-\nu_2, \ \lambda_1-\lambda_2\leq \nu_1+\nu_2+\mu_1-\mu_2\\
    \lambda_1-\lambda_2\leq \mu_1+\mu_2+\nu_1-\nu_2, \mu_1-\mu_2\leq \nu_1+\nu_2+\lambda_1-\lambda_2,
    \nu_1-\nu_2\leq \lambda_1+\lambda_2+\mu_1-\mu_2.\nonumber
\end{align}
\end{theorem}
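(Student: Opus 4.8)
\noindent The forward implication needs no new idea. For $n=2$ the Horn inequalities of Proposition~\ref{prop:Horn} are exactly the three Weyl inequalities $\lambda_1\le\mu_1+\nu_1$, $\lambda_2\le\mu_1+\nu_2$, $\lambda_2\le\mu_2+\nu_1$, so Corollary~\ref{cor:cyclic} (permuting the roles of $\mu,\nu,\lambda$) gives all of (\ref{eqn:the2horn1})--(\ref{eqn:the2horn3}); Theorem~\ref{thm:Horn} specialized to $n=2$ (the only admissible data is $k=i=1$, $j=l=2$, $m=M=p=0$), again combined with Corollary~\ref{cor:cyclic}, gives (\ref{ineq:lin}); and the triangle and parity conditions are Lemma~\ref{prop:easy}(III),(V). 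So the whole content lies in the converse, and the plan is to produce an explicit witness $(\alpha,\beta,\gamma)$ for $N_{\mu,\nu,\lambda}>0$.

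The parity and triangle hypotheses make
\[A:=\tfrac{|\mu|+|\nu|-|\lambda|}{2},\qquad B:=\tfrac{|\mu|+|\lambda|-|\nu|}{2},\qquad C:=\tfrac{|\nu|+|\lambda|-|\mu|}{2}\]
non-negative integers, and by (\ref{eqn:parity}) any witness must have $|\alpha|=A$, $|\beta|=B$, $|\gamma|=C$. A two-row partition of fixed size is determined by its second part, so setting $a=\alpha_2$, $b=\beta_2$, $c=\gamma_2$ and invoking Theorem~\ref{thm:classicalHorn} with $n=2$ (where the Horn conditions become the analogue of (\ref{eqn:ris2}) for the triple in question), the existence of a witness becomes: find $a,b,c\in\mathbb Z$ with $0\le a\le\lfloor A/2\rfloor$, $0\le b\le\lfloor B/2\rfloor$, $0\le c\le\lfloor C/2\rfloor$ and
\[a+b\le\mu_2,\quad \mu_2-B\le a-b\le A-\mu_2;\qquad a+c\le\nu_2,\quad \nu_2-C\le a-c\le A-\nu_2;\]
\[b+c\le\lambda_2,\quad \lambda_2-C\le b-c\le B-\lambda_2,\]
the containments $\alpha\subseteq\mu\wedge\nu$, $\beta\subseteq\mu\wedge\lambda$, $\gamma\subseteq\nu\wedge\lambda$ being automatic consequences. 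A useful preliminary observation is that, modulo the triangle inequalities, (\ref{ineq:lin}) says exactly $A\le|\mu\wedge\nu|$, $B\le|\mu\wedge\lambda|$, $C\le|\nu\wedge\lambda|$: for instance $C\le|\nu\wedge\lambda|$ unwinds to $|\mu|\ge|\nu\Delta\lambda|=|\nu_1-\lambda_1|+|\nu_2-\lambda_2|$, whose four sign cases are two triangle inequalities and two members of (\ref{ineq:lin}). Thus partitions of the prescribed sizes inside the relevant meets certainly exist, and the real task is to place them so that all three triples satisfy their Weyl inequalities.

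The argument then splits into two steps. First, show that the displayed system, relaxed over $\mathbb R$ (with $a\le A/2$, $b\le B/2$, $c\le C/2$ in place of the floors), is feasible exactly when (\ref{eqn:the2horn1})--(\ref{ineq:lin}) hold; this is a bounded Fourier--Motzkin elimination — eliminate $c$, then $b$, then $a$ — followed by matching the resulting finite list of inequalities, after substituting the definitions of $A,B,C$, against (\ref{eqn:the2horn1})--(\ref{ineq:lin}), the triangle inequalities, and $A,B,C\ge0$: routine but lengthy. Second, one must pass from a real solution to an integral one. This is the delicate point, because the relaxed polytope is \emph{not} integral in general — its vertices acquire denominator $2$ precisely when one of $A,B,C$ is odd. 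I would resolve it by writing down a closed-form choice of $(a,b,c)$ in terms of the entries of $\mu,\nu,\lambda$ with floors and ceilings and verifying the inequalities directly, with a short case split on the parities of $A,B,C$; the hypothesis $|\mu|+|\nu|+|\lambda|\equiv0$ is exactly what keeps $A,B,C$ (hence the construction) integral, so that the construction remains internally consistent across all three Weyl triples.

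I expect the integral-witness step to be the main obstacle: ruling out that (\ref{eqn:the2horn1})--(\ref{ineq:lin}) are satisfiable only by a fractional $(\alpha,\beta,\gamma)$, and organizing the parity cases so that the three Weyl triples hold simultaneously. As a consistency check on the statement, $N_{(2),(4),(2,2)}=0$ although the Weyl inequalities (\ref{eqn:the2horn1})--(\ref{eqn:the2horn3}), the triangle inequalities, and the parity condition all hold there — consistent with the fact that (\ref{ineq:lin}) must fail, since $C=3>2=|\nu\wedge\lambda|$. The ``generalizable strategy'' alluded to after the statement is, presumably, precisely this template: cut out the Newell--Littlewood polytope by explicit inequalities, then argue separately that non-emptiness together with the evident congruence forces a lattice point.
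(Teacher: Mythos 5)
Your forward direction is fine and coincides with the paper's (Proposition~\ref{prop:Horn}, Theorem~\ref{thm:Horn}, Corollary~\ref{cor:cyclic}), and your reduction of the converse to a three-variable integer program is correct: a witness must have $|\alpha|=A$, $|\beta|=B$, $|\gamma|=C$, two-row partitions of fixed size are determined by their second parts, and Theorem~\ref{thm:classicalHorn} at $n=2$ turns positivity of each of the three Littlewood--Richardson coefficients into the three Weyl inequalities you list. Your observation that (\ref{ineq:lin}) modulo the triangle inequalities is exactly $A\leq|\mu\wedge\nu|$, $B\leq|\mu\wedge\lambda|$, $C\leq|\nu\wedge\lambda|$ checks out, as does the example $N_{(2),(4),(2,2)}=0$. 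This route is genuinely different from the paper's, which argues the contrapositive: assuming $N_{\mu,\nu,\lambda}=0$, it introduces ``valid pairs'' $(\mu^{\downarrow k},\nu^{\downarrow k})$, shows every valid pair must violate one of three Horn inequalities for $c^{\lambda}_{\mu^{\downarrow k},\nu^{\downarrow k}}$, and then runs a discrete-continuity argument (Claims~\ref{claim:ineq2&3}--\ref{claim:ineqall2&3}) moving one box at a time through the family of valid pairs to force a violation of (\ref{eqn:the2horn1})--(\ref{ineq:lin}).

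However, as written your argument has a genuine gap: the two steps that constitute the entire content of the converse are asserted rather than carried out. Step one --- that Fourier--Motzkin elimination of $c$, then $b$, then $a$ from your system produces \emph{exactly} the listed inequalities and no irredundant extras --- is precisely the claim of the theorem and cannot be labeled ``routine'' without exhibiting the elimination; eliminating three variables from roughly fifteen inequalities generates many consequences, and verifying that each collapses to a stated inequality (or to a triangle/parity consequence) is where the proof lives. Step two, the passage from a real point to a lattice point, is the crux, and you acknowledge as much: it is essentially equivalent to the $n=2$ saturation statement (Corollary~\ref{n=2sat}), which the paper \emph{deduces from} this theorem rather than using to prove it, and your own remark that the relaxed polytope acquires half-integral vertices shows no general integrality theorem applies. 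Until you exhibit the closed-form $(a,b,c)$ with its parity case analysis and verify all nine Weyl inequalities plus the bounds $0\leq a\leq\lfloor A/2\rfloor$ etc., what you have is a plausible program, not a proof. The template is sound and may well be the ``generalizable strategy'' the authors allude to, but both missing steps must be executed in full.
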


Above, (\ref{eqn:the2horn1}), (\ref{eqn:the2horn2}), (\ref{eqn:the2horn3}) are the $n=2$ Horn/Weyl inequalities
(\ref{eqn:ris2}) and their symmetric analogues. (\ref{ineq:lin}) represents (up to symmetry) the unique inequality of the form
(\ref{ijklInequality}) for this case.

Theorem~\ref{prop:r=2,def=k} implies another case of Conjectures~\ref{theconj} and~\ref{theconj'}:
\begin{corollary}\label{n=2sat}
Conjectures~\ref{theconj} and~\ref{theconj'} hold when $n=2$.
\end{corollary}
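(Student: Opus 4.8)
The plan is to deduce the $n=2$ cases of Conjectures~\ref{theconj} and~\ref{theconj'} directly from Theorem~\ref{prop:r=2,def=k}, exploiting the key structural feature that the defining conditions (\ref{eqn:the2horn1})--(\ref{ineq:lin}) are all \emph{homogeneous} linear inequalities in the parts of $\mu,\nu,\lambda$, hence scale-invariant. This is exactly what is needed to ``un-scale'' a nonvanishing at level $k$.

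First I would set up the hypotheses: assume $\lambda,\mu,\nu\in{\sf Par}_2$ with $|\lambda|+|\mu|+|\nu|\equiv 0\ (\mathrm{mod}\ 2)$ and $N_{k\mu,k\nu,k\lambda}>0$ for some $k\geq 1$, the goal being $N_{\mu,\nu,\lambda}>0$. I would check that the scaled triple $(k\mu,k\nu,k\lambda)$ meets the hypotheses of Theorem~\ref{prop:r=2,def=k}: its sizes satisfy the triangle inequalities by Lemma~\ref{prop:easy}(III) (since $N_{k\mu,k\nu,k\lambda}>0$), and $|k\lambda|+|k\mu|+|k\nu|=k(|\lambda|+|\mu|+|\nu|)$ is even --- automatically if $k$ is even, and by the parity hypothesis if $k$ is odd. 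Thus Theorem~\ref{prop:r=2,def=k} applies to $(k\mu,k\nu,k\lambda)$ and, because $N_{k\mu,k\nu,k\lambda}>0$, yields that (\ref{eqn:the2horn1})--(\ref{ineq:lin}) all hold for $(k\mu,k\nu,k\lambda)$.

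Next, by homogeneity I would divide all of those inequalities by $k$ to conclude they hold for $(\mu,\nu,\lambda)$; dividing the triangle inequalities for $(k\mu,k\nu,k\lambda)$ by $k$ likewise gives them for $(\mu,\nu,\lambda)$. Since $(\mu,\nu,\lambda)$ also satisfies the parity hypothesis by assumption, the converse (``if'') direction of Theorem~\ref{prop:r=2,def=k} gives $(\mu,\nu,\lambda)\in{\sf NL}_2$, i.e.\ $N_{\mu,\nu,\lambda}>0$; this is Conjecture~\ref{theconj} for $n=2$. To upgrade this to Conjecture~\ref{theconj'}, I would re-run the ``$\Rightarrow$'' step of the proof that Conjectures~\ref{theconj} and~\ref{theconj'} are equivalent: from $N_{\mu,\nu,\lambda}>0$ extract a witness $\alpha,\beta,\gamma$ via (\ref{eqn:Newell-Littlewood}) with $c_{\alpha,\beta}^{\mu}c_{\alpha,\gamma}^{\nu}c_{\beta,\gamma}^{\lambda}>0$, then apply the Littlewood--Richardson semigroup property (Corollary~\ref{prop:NLsemigroup}) to obtain $c^{k\mu}_{k\alpha,k\beta}c^{k\nu}_{k\alpha,k\gamma}c^{k\lambda}_{k\beta,k\gamma}>0$, which is the conclusion of Conjecture~\ref{theconj'}.

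I do not anticipate a serious obstacle: the argument is essentially a homogeneity observation once Theorem~\ref{prop:r=2,def=k} is available. The one point that requires a little care --- and the only place the parity hypothesis is genuinely invoked --- is confirming that the scaled triple $(k\mu,k\nu,k\lambda)$ itself satisfies the parity condition when $k$ is odd, so that Theorem~\ref{prop:r=2,def=k} is legitimately applicable to it in the first place.
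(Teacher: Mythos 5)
Your proposal is correct and follows essentially the same route as the paper: apply the ``only if'' direction of Theorem~\ref{prop:r=2,def=k} to $(k\mu,k\nu,k\lambda)$, use homogeneity of (\ref{eqn:the2horn1})--(\ref{ineq:lin}) to descend to $(\mu,\nu,\lambda)$, and conclude via the ``if'' direction; the upgrade to Conjecture~\ref{theconj'} via the equivalence proposition is exactly how the paper handles it. Your extra care in verifying the parity and triangle-inequality hypotheses for both the scaled and unscaled triples is a fair (and correct) elaboration of what the paper leaves implicit.
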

\begin{proof}
Suppose that $|\lambda|+|\mu|+|\nu|\equiv 0 \ (\!\!\!\!\mod 2)$ and $N_{k\mu,k\nu,k\lambda}>0$. By
Theorem~\ref{prop:r=2,def=k}, $(k\mu,k\nu,k\lambda)$ satisfies 
(\ref{eqn:the2horn1}), (\ref{eqn:the2horn2}), (\ref{eqn:the2horn3}) and (\ref{ineq:lin}) after the substitution
\[\mu\mapsto k\mu, \nu\mapsto k\nu, \lambda\mapsto k\lambda.\]  These
inequalities are homogeneous in $\lambda_i,\mu_i,\nu_i$. Hence $(\mu,\nu,\lambda)$ satisfies
(\ref{eqn:the2horn1}), (\ref{eqn:the2horn2}), (\ref{eqn:the2horn3}) and (\ref{ineq:lin}). Therefore by
the ``$\Leftarrow$'' direction of Theorem~\ref{prop:r=2,def=k}, $N_{\mu,\nu,\lambda}>0$, as required.
\end{proof}

The classical Weyl inequalities do not characterize ${\sf LR}_3$. Analogously, the extended Weyl inequalities
(combined with Proposition~\ref{prop:Horn} and Corollary~\ref{cor:cyclic}) are not sufficient to characterize
${\sf NL}_3$. An example is $\mu=(6,0,0),\nu=(4,2,2)$ and $\lambda=(4,4,0)$. However,
we have an additional list of inequalities that should close the gap in this case. 
We plan to address this issue (and more) in a sequel. For now, we restrict to proving Theorem~\ref{prop:r=2,def=k}, to illustrate a general strategy.

\noindent \emph{Proof of Theorem~\ref{prop:r=2,def=k}:} The ``$\Rightarrow$'' direction is by Proposition~\ref{prop:Horn},
 Theorem~\ref{thm:Horn}, and Corollary~\ref{cor:cyclic}. To prove the converse, let $(\lambda,\mu,\nu)\in {\sf Par}_2$
be such that $|\lambda|+|\mu|+|\nu|\equiv 0 \ (\!\!\!\!\mod 2)$ and $N_{\mu,\nu,\lambda}=0$. We
now show that either one of the triangle inequalities, or an inequality from 
(\ref{eqn:the2horn1})-(\ref{ineq:lin}), is violated. 

\begin{claim}\label{lemma:<symdiff}
If $|\lambda| < |\mu \Delta \nu|$, either a triangle inequality or an inequality from (\ref{ineq:lin}) is violated.
\end{claim}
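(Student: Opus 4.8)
The plan is to rewrite the hypothesis in coordinates. For two-row partitions one has $\mu\wedge\nu = (\min(\mu_1,\nu_1),\min(\mu_2,\nu_2))$, and since $|\mu\Delta\nu| = |\mu|+|\nu|-2|\mu\wedge\nu|$ this yields the clean identity $|\mu\Delta\nu| = |\mu_1-\nu_1| + |\mu_2-\nu_2|$. Thus the hypothesis $|\lambda| < |\mu\Delta\nu|$ becomes $\lambda_1+\lambda_2 < |\mu_1-\nu_1| + |\mu_2-\nu_2|$, and I would argue by cases according to the signs of $\mu_1-\nu_1$ and $\mu_2-\nu_2$.

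If $\mu_i\geq\nu_i$ for both $i=1,2$, then $|\mu\Delta\nu| = |\mu|-|\nu|$, so the hypothesis reads $|\lambda|+|\nu|<|\mu|$, violating the triangle inequality $|\mu|\leq|\nu|+|\lambda|$; the case $\mu_i\leq\nu_i$ for both $i$ is the image of this under the swap $\mu\leftrightarrow\nu$ and violates $|\nu|\leq|\mu|+|\lambda|$. In the ``crossing'' case $\mu_1\geq\nu_1$ and $\mu_2\leq\nu_2$ one computes $|\mu\Delta\nu| = (\mu_1-\nu_1)+(\nu_2-\mu_2) = (\mu_1-\mu_2)-(\nu_1-\nu_2)$, so the hypothesis rearranges (add $\nu_1-\nu_2$ to both sides) to $\mu_1-\mu_2 > \lambda_1+\lambda_2+\nu_1-\nu_2$, which directly negates the inequality $\mu_1-\mu_2\leq\lambda_1+\lambda_2+\nu_1-\nu_2$ appearing in (\ref{ineq:lin}). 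The remaining case $\nu_1\geq\mu_1$, $\mu_2\geq\nu_2$ is once more obtained by swapping $\mu$ and $\nu$ (the list (\ref{ineq:lin}) is symmetric under this swap, being $\mathfrak{S}_3$-symmetric as already noted) and negates $\nu_1-\nu_2\leq\lambda_1+\lambda_2+\mu_1-\mu_2$. This exhausts the four sign patterns.

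There is essentially no obstacle here beyond bookkeeping: the only points requiring care are the sign conventions in the four cases and checking that the two ``crossing'' subcases are genuinely interchanged by $\mu\leftrightarrow\nu$, so that invoking the $\mathfrak{S}_3$-symmetry of (\ref{ineq:lin}) and of the triangle inequalities is legitimate. I expect this claim to be among the easier ingredients of Theorem~\ref{prop:r=2,def=k}; the substantive work should lie in the complementary range $|\mu\Delta\nu|\leq|\lambda|\leq|\mu|+|\nu|$, where $N_{\mu,\nu,\lambda}=0$ is a more delicate condition to convert into a violated inequality.
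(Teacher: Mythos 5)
Your proof is correct and follows essentially the same route as the paper: the paper likewise reduces to the sign pattern of $\mu_i-\nu_i$ (using Lemma~\ref{prop:easy}(I) to assume $\nu_1\geq\mu_1$ and thus handling only two subcases, where you enumerate all four), deriving a violated triangle inequality in the ``nested'' case and a violated inequality from (\ref{ineq:lin}) in the ``crossing'' case. Your closing remark is also accurate: the substantive work of Theorem~\ref{prop:r=2,def=k} lies in the range $|\mu\Delta\nu|\leq|\lambda|\leq|\mu|+|\nu|$.
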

\noindent
\emph{Proof of Claim~\ref{lemma:<symdiff}:}
By Lemma~\ref{prop:easy}(I), we may assume without loss that $\nu_1 \geq \mu_1$. If $\nu_2 \geq \mu_2$, then $|\mu \Delta \nu| = |\nu|-|\mu|$. Combining this with the hypothesis $|\lambda| < |\mu \Delta \nu|$ we obtain a failure of the triangle inequality $|\lambda|+|\mu| \geq |\nu|$. If $\nu_2 < \mu_2$, then 
\[|\mu \Delta \nu| = \nu_1-\mu_1+\mu_2-\nu_2.\] 
Now, $|\lambda| < |\mu \Delta \nu|$ implies that 
\[\nu_1-\nu_2 > \lambda_1+\lambda_2+\mu_1-\mu_2\]
which violates the sixth equation of (\ref{ineq:lin}).\qed

By Claim~\ref{lemma:<symdiff}, we may henceforth assume that 
\begin{equation}
\label{eqn:theassump}
|\mu \Delta \nu| \leq |\lambda| \leq |\mu|+|\nu|.
\end{equation}
Let 
\begin{equation}
\label{eqn:May1aaab}
k = \frac{|\mu|+|\nu|-|\lambda|}{2} \geq 0;
\end{equation} 
$k\in {\mathbb Z}$ by the hypothesis that $|\lambda|+|\mu|+|\nu|\equiv 0 \ (\!\!\!\!\mod 2)$. For future use, we record this rewriting of (\ref{eqn:May1aaab}): 
\begin{equation}
\label{eqn:May1aaa}
\lambda_1+\lambda_2=\mu_1+\mu_2+\nu_1+\nu_2-2k.
\end{equation}

A pair $(\mu^{\downarrow k},\nu^{\downarrow k})\in {\sf Par}_2$ is \emph{valid} if there exists $\alpha\in {\sf Par}_2$ with
$|\alpha|=k$ such that  $c_{\alpha,\mu^{\downarrow k}}^\mu>0$ and $c_{\alpha,\nu^{\downarrow k}}^\nu>0$ (equivalently, $\mu^{\downarrow k} \subset \mu, \ \nu^{\downarrow k} \subset \nu$ with $ |\mu/\mu^{\downarrow k}| = |\nu/\nu^{\downarrow k}| = k$, and the two skew shapes $\mu/\mu^{\downarrow k} \text{ and }\nu/\nu^{\downarrow k}$  each have a LR tableau of the same content $\alpha$).

\begin{claim}\label{claim:April30aaa}
 A valid pair $(\mu^{\downarrow k},\nu^{\downarrow k})$ exists. Moreover, 
 \begin{equation}
 \label{eqn:kmin}
 k\leq |\mu\wedge \nu|=\min(\mu_1,\nu_1)+\min(\mu_2,\nu_2).
 \end{equation}
  \end{claim}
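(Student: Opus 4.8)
The plan is to work directly with the definition of validity and with the Pieri-type structure of $\mathsf{Par}_2$, where every skew shape that can arise as $\mu/\mu^{\downarrow k}$ is governed by at most two parameters. First I would establish the bound $k \leq |\mu \wedge \nu|$. Recall $k = \tfrac{|\mu|+|\nu|-|\lambda|}{2}$, and by~(\ref{eqn:theassump}) we have $|\lambda| \geq |\mu\Delta\nu| = |\mu|+|\nu| - 2|\mu\wedge\nu|$; substituting this into the formula for $k$ gives $k \leq |\mu\wedge\nu| = \min(\mu_1,\nu_1) + \min(\mu_2,\nu_2)$ immediately. This is the easy half and it is exactly~(\ref{eqn:kmin}).

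For the existence of a valid pair, the idea is to produce an explicit $\alpha \in \mathsf{Par}_2$ with $|\alpha| = k$ together with subpartitions $\mu^{\downarrow k} \subset \mu$ and $\nu^{\downarrow k} \subset \nu$, each obtained by deleting a skew shape that carries an LR filling of content $\alpha$. In $\mathsf{Par}_2$ the only contents $\alpha$ that admit an LR tableau on a two-row skew shape $\mu/\mu^{\downarrow k}$ are $\alpha = (a)$ (a single row) and $\alpha = (a,b)$; the LR/ballot condition on two rows forces the skew shape to be a suitable "broken" strip, and the constraint is just the interlacing $\mu^{\downarrow k}_1 \geq \mu_2$-type inequalities together with the content count. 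Concretely I would try $\alpha = (k)$ when $k \leq \min(\mu_1 - \mu_2, \nu_1 - \nu_2) + (\text{room in the second row})$ and otherwise push the overflow into a second row of $\alpha$; in either case one removes $k$ boxes from $\mu$ (the first few from the end of row~1, the remainder from the end of row~2, arranged so the removed skew shape is LR of content $\alpha$) and symmetrically from $\nu$. The key point is that the bound $k \leq |\mu\wedge\nu|$ just proved guarantees there are enough boxes available in $\mu$ (and in $\nu$) to remove, so the construction never runs out of room; one then checks that a common choice of $\alpha$ works for both $\mu$ and $\nu$ by taking $\alpha$ as large as forced and verifying $|\alpha| = k$ can be met.

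The main obstacle I anticipate is the bookkeeping needed to show that a \emph{single} $\alpha$ simultaneously witnesses $c^{\mu}_{\alpha,\mu^{\downarrow k}} > 0$ and $c^{\nu}_{\alpha,\nu^{\downarrow k}} > 0$: a priori $\mu$ and $\nu$ might "want" different shapes of $\alpha$. The way around this is to observe that in $\mathsf{Par}_2$ the set of contents $\alpha$ of size $k$ realizable by a removable LR skew shape inside $\mu$ is an interval (in the dominance-type order on the pair $(\alpha_1,\alpha_2)$ with $\alpha_1$ as large as possible bounded by $\mu_1$, as small as possible bounded below by considerations from row~2), and likewise for $\nu$; one shows these two intervals overlap precisely because $k \leq |\mu\wedge\nu|$ forces each interval to contain the "balanced" choice $\alpha = (\lceil k/1 \rceil, \ldots)$ dictated by how the deletions must split between the two rows. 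Checking this overlap is the crux; once it is in hand, picking $\alpha$ in the intersection finishes the proof, and the resulting $(\mu^{\downarrow k}, \nu^{\downarrow k})$ is valid by construction.
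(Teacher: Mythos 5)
Your derivation of (\ref{eqn:kmin}) is correct and is exactly the paper's: substitute $|\lambda|\geq|\mu\Delta\nu|=|\mu|+|\nu|-2|\mu\wedge\nu|$ from (\ref{eqn:theassump}) into $k=\frac{|\mu|+|\nu|-|\lambda|}{2}$.

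For existence, however, your proposal has a genuine gap: the entire argument hinges on the ``overlap'' of the two sets of contents $\alpha$ realizable inside $\mu$ and inside $\nu$, and you explicitly defer this step (``checking this overlap is the crux'') rather than carrying it out. As written, nothing is proved. The gap is fillable, and in fact the difficulty you anticipate dissolves once you notice that the set of contents of size $k$ realizable by a removable LR skew shape inside $\mu$ is simply $\{\alpha\in{\sf Par}_2:|\alpha|=k,\ \alpha\subseteq\mu\}$: for any such $\alpha$ one has $s_{\mu/\alpha}\neq 0$, hence by (\ref{eqn:skew}) some $\beta$ with $c_{\alpha,\beta}^{\mu}>0$, and this $\beta$ serves as $\mu^{\downarrow k}$. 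There is no interlacing bookkeeping to do, and no ``broken strip'' analysis: a single $\alpha$ works for both $\mu$ and $\nu$ precisely when $\alpha\subseteq\mu\wedge\nu$, and such an $\alpha$ of size $k$ exists exactly because of the bound $k\leq|\mu\wedge\nu|$ you already proved (take $\alpha_2=\max(0,k-\min(\mu_1,\nu_1))$ and $\alpha_1=k-\alpha_2$). Had you made this observation, your route would be a complete and legitimate alternative to the paper's.

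The paper itself takes a different and shorter route for existence: since $|\mu\Delta\nu|\leq|\lambda|\leq|\mu|+|\nu|$ and the parity condition holds, Theorem~\ref{thm:interval}(I) produces some $\lambda^{\circ}$ with $|\lambda^{\circ}|=|\lambda|$ and $N_{\mu,\nu,\lambda^{\circ}}>0$; any witness $(\alpha,\beta,\gamma)$ from (\ref{eqn:Newell-Littlewood}) then has $|\alpha|=k$ by (\ref{eqn:parity}), so $(\beta,\gamma)$ is a valid pair. That argument buys brevity by reusing Theorem~\ref{thm:interval}; your (completed) construction would buy explicitness, which is in the spirit of how valid pairs are manipulated later via Claim~\ref{claim:2rowLR}, but you must actually close the realizability step before the proof stands.
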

 \noindent
 \emph{Proof of Claim~\ref{claim:April30aaa}:} By (\ref{eqn:theassump}), $|\lambda| \geq |\mu \Delta \nu|$. Thus
 existence follows from Theorem~\ref{thm:interval}(I) combined with (\ref{eqn:Newell-Littlewood}). 
 (\ref{eqn:kmin}) holds since 
$|\mu\wedge\nu|=\frac{|\mu|+|\nu|-|\mu\Delta\nu|}{2}\geq \frac{|\mu|+|\nu|-|\lambda|}{2}:=k$.
\qed

For $i=1,2$, let $k_i$ and $l_i$ to be the number of boxes in row $i$ of the skew shapes $\mu/\mu^{\downarrow k}$ and $\nu/\nu^{\downarrow k}$ respectively.

\begin{claim}
\label{claim:May2nnn}
If ($\mu^{\downarrow k},\nu^{\downarrow k}$) is valid then at least one of the following
inequalities holds:
\begin{equation}\label{ineq1:def=k}
    \lambda_1 > \mu_1 +\nu_1 - k_1-l_1
\end{equation}
\begin{equation}\label{ineq2:def=k}
    \lambda_2 > \mu_1 +\nu_2 - k_1-l_2
\end{equation}
\begin{equation}\label{ineq3:def=k}
    \lambda_2 > \mu_2 +\nu_1 - k_2-l_1.
\end{equation}
\end{claim}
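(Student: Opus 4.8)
\noindent\emph{Proof proposal for Claim~\ref{claim:May2nnn}.} The plan is to read a valid pair as a \emph{failed attempt} to assemble a witness for $N_{\mu,\nu,\lambda}>0$, and then to extract the obstruction from the $n=2$ Littlewood--Richardson positivity criterion. Recall that we are in the midst of proving the ``$\Leftarrow$'' direction of Theorem~\ref{prop:r=2,def=k} by contraposition, so the standing hypothesis is $N_{\mu,\nu,\lambda}=0$.

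First I would record the bookkeeping that makes everything work: by the definition (\ref{eqn:May1aaab}) of $k$ we have $|\mu^{\downarrow k}|=|\mu|-k$ and $|\nu^{\downarrow k}|=|\nu|-k$, hence $|\mu^{\downarrow k}|+|\nu^{\downarrow k}|=|\mu|+|\nu|-2k=|\lambda|$; also $\mu^{\downarrow k},\nu^{\downarrow k}\in{\sf Par}_2$ by the definition of validity, with $\mu^{\downarrow k}_i=\mu_i-k_i$ and $\nu^{\downarrow k}_i=\nu_i-l_i$ (where $k_1+k_2=l_1+l_2=k$). Next, since $(\mu^{\downarrow k},\nu^{\downarrow k})$ is valid, there is $\alpha\in{\sf Par}_2$ with $|\alpha|=k$, $c_{\alpha,\mu^{\downarrow k}}^{\mu}>0$, and $c_{\alpha,\nu^{\downarrow k}}^{\nu}>0$. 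If $c_{\mu^{\downarrow k},\nu^{\downarrow k}}^{\lambda}$ were also positive, then $(\alpha,\mu^{\downarrow k},\nu^{\downarrow k})$ would be a witness contributing a positive term to the sum (\ref{eqn:Newell-Littlewood}), contradicting $N_{\mu,\nu,\lambda}=0$. Hence $c_{\mu^{\downarrow k},\nu^{\downarrow k}}^{\lambda}=0$.

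Now $\mu^{\downarrow k},\nu^{\downarrow k},\lambda\in{\sf Par}_2$ with $|\mu^{\downarrow k}|+|\nu^{\downarrow k}|=|\lambda|$, so by Theorem~\ref{thm:classicalHorn} --- equivalently, the $n=2$ Weyl inequalities (\ref{eqn:ris2}) applied with $(\mu,\nu)$ replaced by $(\mu^{\downarrow k},\nu^{\downarrow k})$ --- the vanishing of $c_{\mu^{\downarrow k},\nu^{\downarrow k}}^{\lambda}$ forces at least one of $\lambda_1>\mu^{\downarrow k}_1+\nu^{\downarrow k}_1$, $\lambda_2>\mu^{\downarrow k}_1+\nu^{\downarrow k}_2$, $\lambda_2>\mu^{\downarrow k}_2+\nu^{\downarrow k}_1$. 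Substituting $\mu^{\downarrow k}_i=\mu_i-k_i$ and $\nu^{\downarrow k}_i=\nu_i-l_i$ turns these, respectively, into (\ref{ineq1:def=k}), (\ref{ineq2:def=k}), and (\ref{ineq3:def=k}), which is the assertion.

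This claim itself carries no real obstacle: it is a short deduction from the definition of $N_{\mu,\nu,\lambda}$ and the $n=2$ Littlewood--Richardson rule, the only points to keep straight being the size identity $|\mu^{\downarrow k}|+|\nu^{\downarrow k}|=|\lambda|$ (so that the \emph{equality} form of the $n=2$ criterion applies) and the row-count translation $\mu^{\downarrow k}_i=\mu_i-k_i$, $\nu^{\downarrow k}_i=\nu_i-l_i$. The genuine difficulty of the overall argument lies downstream --- namely, choosing the valid pair (whose existence is Claim~\ref{claim:April30aaa}) carefully enough that the row counts $k_i,l_i$ are controlled and the resulting inequality among (\ref{ineq1:def=k})--(\ref{ineq3:def=k}) can be upgraded into a violation of one of the inequalities (\ref{eqn:the2horn1})--(\ref{ineq:lin}) in the theorem statement.
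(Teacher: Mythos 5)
Your proof is correct and is essentially the paper's own argument: the paper likewise observes that $N_{\mu,\nu,\lambda}=0$ forces $c^{\lambda}_{\mu^{\downarrow k},\nu^{\downarrow k}}=0$ for every valid pair (else the pair together with its content $\alpha$ would be a witness) and then invokes the $n=2$ case of Theorem~\ref{thm:classicalHorn}, i.e.\ (\ref{eqn:ris2}), to get the failure of one of the three inequalities. Your write-up simply makes explicit the size identity $|\mu^{\downarrow k}|+|\nu^{\downarrow k}|=|\lambda|$ and the translation $\mu^{\downarrow k}_i=\mu_i-k_i$, $\nu^{\downarrow k}_i=\nu_i-l_i$, which the paper leaves implicit.
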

\noindent\emph{Proof of Claim~\ref{claim:May2nnn}:}
By (\ref{eqn:Newell-Littlewood}), $N_{\mu,\nu,\lambda} = 0 \iff c^{\lambda}_{\mu^{\downarrow k}\nu^{\downarrow k}}=0$ whenever ($\mu^{\downarrow k}\nu^{\downarrow k}$) is a valid pair.  
Now the claim holds by the $n=2$ case of
 Theorem~\ref{thm:classicalHorn} (see (\ref{eqn:ris2})).\qed

\begin{claim}\label{claim:2rowLR}
Suppose $\mu^{\downarrow k}=(\mu_1-k_1,\mu_2-k_2), \nu^{\downarrow k}=(\nu_1-l_1,\nu_2-l_2)$  
and $\alpha=(\alpha_1,\alpha_2)\in {\mathbb Z}^2$. Then ($\mu^{\downarrow k}, \nu^{\downarrow k}$) is a valid pair of content $\alpha$ if and only if 
\begin{itemize}
\item[(I)] $\mu^{\downarrow k}, \nu^{\downarrow k} \in {\sf Par}_2$;
\item[(II)] $\alpha\in {\sf Par}_2$;
\item[(III)] $k_1,k_2,l_1,l_2\in {\mathbb Z}_{\geq 0}$;
\item[(IV)] $k_1+k_2=l_1+l_2=\alpha_1+\alpha_2=k$; 
\item[(V)] $k_1,k_2\geq \alpha_2$ and $l_1,l_2\geq \alpha_2$; and
\item[(VI)] $\alpha_2 +(\mu_1-\mu_2) \geq k_1$ and $\alpha_2 +(\nu_1-\nu_2) \geq l_1$.
\end{itemize}
\end{claim}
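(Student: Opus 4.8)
The plan is to reduce the claim entirely to the explicit form of the Littlewood--Richardson rule for partitions with at most two rows. First I would record the two-row rule: for $\alpha,\beta,\gamma\in{\sf Par}_2$ one has $c^{\gamma}_{\alpha,\beta}\in\{0,1\}$, and $c^{\gamma}_{\alpha,\beta}=1$ exactly when $\alpha\subseteq\gamma$, $|\alpha|+|\beta|=|\gamma|$, and
\[
\max(0,\gamma_2-\alpha_1)\ \le\ \beta_2\ \le\ \min(\gamma_1-\alpha_1,\ \gamma_2-\alpha_2).
\]
This is immediate from the tableau description in Section~\ref{sec:LR}: in an LR tableau of shape $\gamma/\alpha$ the top row must consist of $1$'s; the bottom row is $1^c 2^d$ with $d=\beta_2$; column-strictness against the top row forces $c\le\alpha_1-\alpha_2$, i.e.\ $\beta_2\ge\gamma_2-\alpha_1$; ballotness forces $\gamma_1-\alpha_1\ge\beta_2$; and $c\ge0$ gives $\beta_2\le\gamma_2-\alpha_2$. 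Conversely any $\beta_2$ in that range determines a unique such tableau.

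Next I would apply this to the two coefficients $c^{\mu}_{\alpha,\mu^{\downarrow k}}$ and $c^{\nu}_{\alpha,\nu^{\downarrow k}}$ that appear in the definition of a valid pair of content $\alpha$, writing $\mu^{\downarrow k}=(\mu_1-k_1,\mu_2-k_2)$ and $\nu^{\downarrow k}=(\nu_1-l_1,\nu_2-l_2)$ as in the statement. Taking $\gamma=\mu$ and $\beta=\mu^{\downarrow k}$ (so $\beta_2=\mu_2-k_2$), the size condition $|\alpha|+|\mu^{\downarrow k}|=|\mu|$ reads $\alpha_1+\alpha_2=k_1+k_2$; combined with $|\alpha|=k$ this gives $k_1+k_2=k$, and symmetrically $l_1+l_2=k=\alpha_1+\alpha_2$, i.e.\ (IV). The requirements $\alpha\in{\sf Par}_2$ and $\mu^{\downarrow k},\nu^{\downarrow k}\in{\sf Par}_2$ are (II) and (I). The interval inequality $\max(0,\mu_2-\alpha_1)\le\mu_2-k_2\le\min(\mu_2-\alpha_2,\mu_1-\alpha_1)$ unpacks into $k_2\le\mu_2$, $k_2\le\alpha_1$, $\alpha_2\le k_2$, and $\alpha_1\le\mu_1-\mu_2+k_2$. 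Using $k_1+k_2=\alpha_1+\alpha_2$ to eliminate $k_2$, the middle two become $k_1\ge\alpha_2$ and $k_2\ge\alpha_2$ (half of (V)), and the last becomes $\alpha_2+(\mu_1-\mu_2)\ge k_1$ (half of (VI)). Repeating verbatim with $\nu$ and the $l_i$ supplies $l_1,l_2\ge\alpha_2$ and $\alpha_2+(\nu_1-\nu_2)\ge l_1$, finishing (V) and (VI).

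It then remains to note that the remaining items of (I)--(VI) are automatic, so including them is harmless and makes the converse direction immediate: $k_1,k_2,l_1,l_2\ge0$ (hence (III)) follows from $k_i,l_i\ge\alpha_2\ge0$; the containments $\alpha\subseteq\mu$ and $\alpha\subseteq\nu$ needed for the two LR coefficients follow from $\alpha_2\le k_2\le\mu_2$ and $\alpha_1\le\mu_1-\mu_2+k_2\le\mu_1$ and their $\nu$-analogues; and the inequality $\mu_1-k_1\ge\mu_2-k_2$ built into (I) follows from $k_1\le\alpha_2+(\mu_1-\mu_2)$ and $k_2\ge\alpha_2$. Reading the chain of equivalences in both directions then proves the claim.

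I do not anticipate a real obstacle here: the content is the routine two-row Littlewood--Richardson computation. The only place demanding care is getting the two-row criterion exactly right (avoiding off-by-one slips in the column-strictness and ballot bounds), and then cleanly tracking the change of variables between the rule's natural phrasing in terms of $k_2,l_2$ and the statement's phrasing in terms of $k_1,l_1$ via the identity $k_1+k_2=l_1+l_2=\alpha_1+\alpha_2$.
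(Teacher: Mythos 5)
Your proposal is correct and takes essentially the same route as the paper: both arguments reduce the claim to an explicit analysis of two-row LR tableaux, checking semistandardness, ballotness, and content directly, and your translation of the interval criterion into (I)--(VI) via the identity $k_1+k_2=\alpha_1+\alpha_2$ is carried out without error. The only cosmetic difference is that you analyze the coefficient as shape $\mu/\alpha$ with content $\mu^{\downarrow k}$ and first package the two-row rule as a standalone criterion, whereas the paper directly constructs the unique filling of $\mu/\mu^{\downarrow k}$ with content $\alpha$.
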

\noindent\emph{Proof of Claim~\ref{claim:2rowLR}:}
($\Leftarrow$) We construct a LR tableaux $T$ of shape $\mu/\mu^{\downarrow k}$ of content $\alpha$. Conditions (I), (III) guarantees
this is a skew-shape. Fill the $k_1$ boxes
of the first row of $\mu/\mu^{\downarrow k}$ with $1$'s. Since by (V), $k_2\geq \alpha_2$, we can fill
the rightmost $\alpha_2$ boxes of the second row of $\mu/\mu^{\downarrow k}$ with $2$'s. Then fill
the remaining boxes of that row with $1$'s.  $T$ is clearly row semistandard. It is column semistandard because of (VI). It is ballot by (II) and the condition $k_1\geq \alpha_2$ of (V). Finally the content of $T$ is $\alpha$ by (IV).
Thus $c_{\mu^{\downarrow k},\alpha}^{\mu}>0$. Similarly, we show $c_{\nu^{\downarrow k},\alpha}^{\nu}>0$.

($\Rightarrow$) If ($\mu^{\downarrow k}, \nu^{\downarrow k}$) is a valid pair of content $\alpha$ then there exists
LR tableaux $T,U$ of shapes $\mu/\mu^{\downarrow k}$ and $\nu/\nu^{\downarrow k}$ (respectively), and of
common content $\alpha$. Now the conditions follow by reversing the reasoning in the above paragraph.
\qed

\begin{claim}\label{claim:ineq1}
If (\ref{ineq1:def=k}) holds for every valid pair ($\mu^{\downarrow k}$, $\nu^{\downarrow k}$) 
then an inequality from (\ref{eqn:the2horn1})-(\ref{ineq:lin}) is violated.
\end{claim}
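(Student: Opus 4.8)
The plan is to isolate the valid pair $(\mu^{\downarrow k},\nu^{\downarrow k})$ that makes $k_1+l_1$ as small as possible, feed it into the hypothesis, and read off a violated inequality. The key point is that (\ref{ineq1:def=k}), namely $\lambda_1>\mu_1+\nu_1-k_1-l_1$, is monotone in $k_1+l_1$: it is hardest to satisfy when $k_1+l_1$ is smallest. Hence ``(\ref{ineq1:def=k}) holds for every valid pair'' is equivalent to $\lambda_1>\mu_1+\nu_1-m$, where $m:=\min\{k_1+l_1:(\mu^{\downarrow k},\nu^{\downarrow k})\ \text{valid}\}$, and the first task is to compute $m$.

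To compute $m$ I would use the explicit description of valid pairs in Claim~\ref{claim:2rowLR}. Writing $\mu^{\downarrow k}=(\mu_1-k_1,\mu_2-k_2)$, $\nu^{\downarrow k}=(\nu_1-l_1,\nu_2-l_2)$ and $\alpha=(\alpha_1,\alpha_2)$, conditions (III)--(VI) translate to $\max(\alpha_2,\,k-\mu_2)\le k_1\le\alpha_2+(\mu_1-\mu_2)$, the mirror bounds on $l_1$, and $\alpha_1+\alpha_2=k$ with $\alpha_1\ge\alpha_2\ge0$. Every lower bound on $k_1+l_1$ is nondecreasing in $\alpha_2$, while nonemptiness of the intervals forces $\alpha_2\ge\alpha_2^{*}:=\max(0,\,k-\mu_1,\,k-\nu_1)$. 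Taking $\alpha_2=\alpha_2^{*}$ and $k_1,l_1$ minimal gives
\[m=\max(0,\,k-\nu_1,\,k-\mu_2)+\max(0,\,k-\mu_1,\,k-\nu_2),\]
and the resulting pair is valid; this feasibility check is exactly Claim~\ref{claim:April30aaa}, using $0\le k\le|\mu\wedge\nu|$ from (\ref{eqn:theassump}).

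Then I would substitute $k=\tfrac{|\mu|+|\nu|-|\lambda|}{2}$ into $\lambda_1>\mu_1+\nu_1-m$ and split into cases according to which linear form attains each of the two maxima defining $m$ (ties resolved arbitrarily). There are five substantive outcomes. If both maxima vanish, the hypothesis reads $\lambda_1>\mu_1+\nu_1$, contradicting the first inequality of (\ref{eqn:the2horn1}). If $m=(k-\nu_1)+(k-\nu_2)$, it simplifies to $\mu_2>\nu_1+\lambda_2$, contradicting the last inequality of (\ref{eqn:the2horn3}); symmetrically, $m=(k-\mu_2)+(k-\mu_1)$ gives $\nu_2>\mu_1+\lambda_2$, again contradicting (\ref{eqn:the2horn3}). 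If $m=0+(k-\nu_2)$, it becomes $\lambda_1-\lambda_2>\mu_1-\mu_2+\nu_1+\nu_2$, contradicting the last inequality of the first line of (\ref{ineq:lin}); if $m=(k-\mu_2)+0$, it becomes $\lambda_1-\lambda_2>\mu_1+\mu_2+\nu_1-\nu_2$, contradicting the first inequality of the second line of (\ref{ineq:lin}). The remaining combinations either cannot occur --- e.g.\ $(0,\,k-\mu_1)$ would force $k\le\mu_2\le\mu_1<k$ --- or make the hypothesis vacuous: for $m=(k-\nu_1)+(k-\mu_1)$ it forces $\mu_1=\mu_2=\nu_1=\nu_2$ and then reads $\lambda_2<0$, and for $m=(k-\mu_2)+(k-\nu_2)$ it reads $\lambda_1>\lambda_1+\lambda_2$.

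I expect the main obstacle to be the bookkeeping in the computation of $m$: one must check that the proposed optimizer is both feasible and optimal over integer points, carefully tracking how the freedom in which rows lose boxes interacts with the shape, semistandardness, and ballot constraints of Claim~\ref{claim:2rowLR}, and one must dispatch ties among the maxima. Each individual step is a short linear computation, however, and once $m$ is known the final case analysis is routine; the proofs of the companion claims for (\ref{ineq2:def=k}) and (\ref{ineq3:def=k}) will run along identical lines after permuting indices via Lemma~\ref{prop:easy}(I).
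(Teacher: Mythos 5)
Your proposal is correct and is essentially the paper's argument reorganized: the paper's four cases (on $\min(\mu_2,\nu_1,k)$ after assuming $\mu_2\ge\nu_2$) each exhibit precisely the valid pair minimizing $k_1+l_1$, so your explicit computation of $m$ and the case split on which terms attain the two maxima recover the same extremal pairs and the same violated inequalities, including the two vacuous subcases that reduce to $\lambda_2<0$ exactly as in the paper's Case~3. One bookkeeping caveat: your interval for $k_1$ omits the upper bound $k_1\le k-\alpha_2$ forced by $k_2\ge\alpha_2$ in Claim~\ref{claim:2rowLR}(V) (and likewise for $l_1$), and the validity of the minimizer is not literally Claim~\ref{claim:April30aaa}; both are easily repaired since $\alpha_2^{*}\le\mu_2$ and $\alpha_2^{*}\le\nu_2$ follow from $k\le|\mu\wedge\nu|$ together with the triangle inequalities, but these checks do need to be written out.
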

\noindent
\emph{Proof of Claim~\ref{claim:ineq1}:} By Lemma~\ref{prop:easy}(I), we may assume, without loss, that $\mu_2 \geq \nu_2$. In each case below, it is straightforward to verify the conditions
(I)-(VI) of Claim~\ref{claim:2rowLR}, so this is left mostly to the reader.

\noindent \emph{Case 1 ($\min(\mu_2,\nu_1,k)=\nu_1$):} Consider $\mu^{\downarrow k} = (\mu_1-(k-\nu_1),\mu_2-\nu_1)$ and $\nu^{\downarrow k} = (\nu_1-(k-\nu_2))$. We point out that, here and elsewhere, (\ref{eqn:kmin}) is relevant to checking Claim~\ref{claim:2rowLR}; in this case condition (I). Specifically, 
$\mu^{\downarrow k}_1, \nu^{\downarrow k}_1\geq 0$ by (\ref{eqn:kmin}). In addition $\mu^{\downarrow k}_1\geq
\mu^{\downarrow k}_2$ since 
\[\mu_1-(k-\nu_1)-(\mu_2-\nu_1) \geq \mu_1-\mu_2 + |\nu| - k \geq 0\]
(again by (\ref{eqn:kmin})). It follows that ($\mu^{\downarrow k}, \nu^{\downarrow k}$) is a valid pair of content $\alpha=(\nu_1,k-\nu_1)$.
 In this case we have $k_2 = \nu_1$ and $l_2 = \nu_2$ and thus $k_2 + l_2 = |\nu|$. Now 
 by (\ref{eqn:May1aaa}), (\ref{ineq1:def=k}), and Claim~\ref{claim:2rowLR}(IV),  
 \begin{equation}
 \label{eqn:May5bbb}
 \lambda_2<\mu_2 +\nu_2 -k_2-l_2.
 \end{equation}
 Hence,
  $\lambda_2 + \nu_1 < (\mu_2 +\nu_2 -k_2-l_2) +\nu_1=
  \mu_2 + \nu_2 -|\nu| + \nu_1 = \mu_2$. This violates the third inequality of (\ref{eqn:the2horn3}).

\noindent \emph{Case 2a ($\min(\mu_2,\nu_1,k)=k$ and $\nu_2\geq k$):}  $\mu^{\downarrow k}=(\mu_1,\mu_2-k)$ and $\nu^{\downarrow k} = (\nu_1,\nu_2-k)$ is a valid pair of content $\alpha = (k)$. Here $k_1=l_1=0$. Hence
(\ref{ineq1:def=k}) states $\lambda_1 > \mu_1+\nu_1$, violating (\ref{eqn:the2horn1}).

\noindent \emph{Case 2b ($\min(\mu_2,\nu_1,k)=k$ and $k\geq \nu_2$):} $\mu^{\downarrow k} = (\mu_1,\mu_2-k)$ and $\nu^{\downarrow k} = (\nu_1-(k-\nu_2))$ is a valid pair with $\alpha = (k)$. Here $k_1 = 0, k_2 = k, l_1 = k-\nu_2$ and $ l_2 = \nu_2$. By (\ref{ineq1:def=k}) and  (\ref{eqn:May5bbb}),
\begin{align*}
\lambda_1-\lambda_2 & > (\mu_1+\nu_1-k_1-l_1)-(\mu_2+\nu_2-k_2-l_2) \\
&= \mu_1 +\nu_1-(k-\nu_2) -\mu_2-\nu_2 + k + \nu_2\\
&=\nu_1 +\nu_2 + \mu_1-\mu_2
\end{align*}
which violates the third inequality from (\ref{ineq:lin}).

\noindent \emph{Case 3 ($\min(\mu_2,\nu_1,k)=\mu_2$):} Let $\mu^{\downarrow k} = (\mu_1-(k-\mu_2))$, $\nu^{\downarrow k} = (\nu_1 - (k-\nu_2))$. By (\ref{eqn:kmin}), $\mu_2 \geq \nu_2 \geq k-\min \{\mu_1,\nu_1\}$. Using this, one checks $(\mu^{\downarrow k},\nu^{\downarrow k})$ is valid
of content $\alpha = (\min \{\mu_1,\nu_1\}, k-\min\{\mu_1,\nu_1\})$.
Here, $k_2 = \mu_2$ and $l_2 = \nu_2$.
Hence by (\ref{eqn:May5bbb}), $\lambda_2 < \mu_2 + \nu_2 -k_2-l_2 = 0$ contradicts that $\lambda\in {\sf Par}_2$.
\qed

Introduce the quantity
\[\Delta(\mu^{\downarrow k}, \nu^{\downarrow k}):=(\mu_1+\nu_2-k_1-l_2)-(\mu_2+\nu_1-k_2-l_1).\]

\begin{claim}\label{obser:2&3}
Suppose ($\mu^{\downarrow k}, \nu^{\downarrow k}$) is a valid pair
such that $|\Delta(\mu^{\downarrow k}, \nu^{\downarrow k})| \leq 1$. Then 
(\ref{ineq2:def=k}) and (\ref{ineq3:def=k}) are violated.
\end{claim}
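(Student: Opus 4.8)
The plan is to reduce both inequalities to the single assertion $\lambda_2\le\min(A,B)$ for two nonnegative integers $A,B$, and then settle that assertion by a short parity argument. Set
\[A:=\mu_1+\nu_2-k_1-l_2,\qquad B:=\mu_2+\nu_1-k_2-l_1,\]
so that (\ref{ineq2:def=k}) reads $\lambda_2>A$, (\ref{ineq3:def=k}) reads $\lambda_2>B$, and $\Delta(\mu^{\downarrow k},\nu^{\downarrow k})=A-B$. First I would note that, since $(\mu^{\downarrow k},\nu^{\downarrow k})$ is valid, the shapes $\mu^{\downarrow k}=(\mu_1-k_1,\mu_2-k_2)$ and $\nu^{\downarrow k}=(\nu_1-l_1,\nu_2-l_2)$ are honest partitions (Claim~\ref{claim:2rowLR}(I)), so that $A=\mu^{\downarrow k}_1+\nu^{\downarrow k}_2\ge 0$ and $B=\mu^{\downarrow k}_2+\nu^{\downarrow k}_1\ge 0$. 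Adding, $A+B=|\mu^{\downarrow k}|+|\nu^{\downarrow k}|=(|\mu|-k)+(|\nu|-k)=|\mu|+|\nu|-2k=\lambda_1+\lambda_2$ by (\ref{eqn:May1aaa}). Via the elementary identity $\min(A,B)=\tfrac12\bigl((A+B)-|A-B|\bigr)$, the goal $\lambda_2\le\min(A,B)$ is thus equivalent to $\lambda_1-\lambda_2\ge|\Delta|$.

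Next I would split on $|\Delta|\in\{0,1\}$. If $\Delta=0$ then $\lambda_1-\lambda_2\ge 0$ holds automatically because $\lambda\in{\sf Par}_2$. If $|\Delta|=1$ then $A-B$ is odd, hence $A+B=\lambda_1+\lambda_2$ is odd (it has the same parity as $A-B$), hence $\lambda_1\ne\lambda_2$, and so — again because $\lambda$ is a partition — $\lambda_1-\lambda_2\ge 1=|\Delta|$. In either case $\lambda_2\le\min(A,B)$, so $\lambda_2\le A$ and $\lambda_2\le B$; that is, both (\ref{ineq2:def=k}) and (\ref{ineq3:def=k}) fail, which is the claim.

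The only step with any content — the ``hard part'', such as it is — is the parity argument when $|\Delta|=1$: one must observe that the constraint $A+B=|\lambda|$ forces $|\lambda|$ to be odd as soon as $A-B=\pm1$, so that the strict inequality $\lambda_1>\lambda_2$ (which would be unavailable in general) comes for free and precisely absorbs the slack coming from $|\Delta|$. Everything else is bookkeeping: rewriting $A$ and $B$ through $\mu^{\downarrow k},\nu^{\downarrow k}$, invoking (\ref{eqn:May1aaa}) (equivalently the definition of $k$) for $A+B=|\lambda|$, and using validity to know the two shapes are partitions. I do not expect to need $N_{\mu,\nu,\lambda}=0$ or the hypotheses (\ref{eqn:the2horn1})--(\ref{ineq:lin}) anywhere in this particular claim.
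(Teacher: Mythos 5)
Your proof is correct and rests on exactly the same two facts as the paper's: $A+B=\lambda_1+\lambda_2$ (from (\ref{eqn:May1aaa}) and Claim~\ref{claim:2rowLR}(IV)) and $|A-B|\leq 1$; the paper just runs the argument by contradiction ($\lambda_2>A$ forces $\lambda_1\leq B-1\leq A<\lambda_2$), while you prove $\lambda_2\leq\min(A,B)$ directly via the min identity and a parity observation. Both are valid and essentially the same approach.
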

\noindent
\emph{Proof of Claim~\ref{obser:2&3}:}  If (\ref{ineq2:def=k}) holds, by (\ref{eqn:May1aaa}) and
Claim~\ref{claim:2rowLR}(IV) we obtain
\[\lambda_1 \leq \mu_2+\nu_1-k_2-l_1-1 \leq \mu_1+\nu_2-k_1-l_2 < \lambda_2,\] 
which is a contradiction of
$\lambda\in {\sf Par}_2$. Similarly, if (\ref{ineq3:def=k}) holds then  
\[\lambda_1 \leq \mu_1+\nu_2-k_1-l_2-1 \leq \mu_2+\nu_1-k_2-l_1 < \lambda_2,\] 
giving the same contradiction.\qed

\begin{claim}\label{claim:ineq2&3}
Suppose $(\mu^{\downarrow k},\nu^{\downarrow k})$ and $({\widetilde \mu}^{\downarrow k}, {\widetilde \nu}^{\downarrow k})$
are valid pairs of content $\alpha$ and $\widetilde \alpha$, respectively. There is a sequence of valid pairs 
\[(\mu^{\downarrow k}_{(0)},\nu^{\downarrow k}_{(0)})=(\mu^{\downarrow k},\nu^{\downarrow k}), (\mu^{\downarrow k}_{(1)},\nu^{\downarrow k}_{(1)}), \ldots (\mu^{\downarrow k}_{(m)},\nu^{\downarrow k}_{(m)}) = ({\widetilde \mu}^{\downarrow k},{\widetilde \nu}^{\downarrow k})\] 
of contents $\alpha^{(0)}=\alpha, \alpha^{(1)},\ldots, \alpha^{(m)}={\widetilde \alpha}$ (respectively) such that for all $i \in [m]$,
\begin{equation}
\label{eqn:May1hhh}
|\Delta(\mu^{\downarrow k}_{(i)},\nu^{\downarrow k}_{(i)})-\Delta(\mu^{\downarrow k}_{(i-1)},\nu^{\downarrow k}_{(i-1)})|\leq 2.
\end{equation}
\end{claim}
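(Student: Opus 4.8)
\emph{Proof proposal for Claim~\ref{claim:ineq2&3}.} The plan is to reformulate the statement as a connectivity assertion about the lattice points of the ``valid-pair polytope'' and then prove connectivity slice by slice.

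First I would record the key algebraic simplification. Using the equalities $k_2=k-k_1$, $l_2=k-l_1$, $\alpha_1=k-\alpha_2$ of Claim~\ref{claim:2rowLR}(IV), a valid pair together with its content is encoded by the integer triple $(k_1,l_1,\alpha_2)$, and
\[
\Delta(\mu^{\downarrow k},\nu^{\downarrow k})=(\mu^{\downarrow k}_1-\mu^{\downarrow k}_2)-(\nu^{\downarrow k}_1-\nu^{\downarrow k}_2)=\big[(\mu_1-\mu_2)-(\nu_1-\nu_2)\big]-2k_1+2l_1.
\]
So $\Delta$ is an affine function of $(k_1,l_1)$ alone; altering $k_1$ by $\varepsilon_k\in\{-1,0,1\}$ and $l_1$ by $\varepsilon_l\in\{-1,0,1\}$ changes $\Delta$ by $-2\varepsilon_k+2\varepsilon_l$, which has absolute value $\le 2$ except when $(\varepsilon_k,\varepsilon_l)=\pm(1,-1)$. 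Call a move of $(k_1,l_1,\alpha_2)$ \emph{admissible} if it changes each coordinate by at most $1$, stays in the set $Q$ of triples satisfying Claim~\ref{claim:2rowLR}(I)--(VI), and is not of type $(\varepsilon_k,\varepsilon_l)=\pm(1,-1)$ in the first two coordinates. It then suffices to prove that $Q$ is connected under admissible moves: any connecting walk is the desired sequence of valid pairs, since each step changes $\Delta$ by at most $2$.

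Next I would analyze the structure of $Q$. Fixing the content coordinate $\alpha_2=j$, the inequalities of Claim~\ref{claim:2rowLR}(I),(V),(VI) amount to a lower and an upper bound on $k_1$, and, \emph{independently}, a lower and an upper bound on $l_1$; hence the slice $Q_j$ is a product of two integer intervals, and, since a unit move in the $k_1$- or the $l_1$-direction is admissible ($\Delta$ changes by $\pm2$), each nonempty $Q_j$ is connected under admissible moves. It remains to connect consecutive nonempty slices $Q_j$ and $Q_{j+1}$. Here Claim~\ref{claim:2rowLR}(II) forces $2j\le k-2$, and combining nonemptiness of both slices with (\ref{eqn:kmin}) pins the endpoints down: each of the four bounds moves by at most $1$ as $j\mapsto j+1$, and one gets $k-\min(\mu_1,\nu_1)\le j\le\min(\mu_2,\nu_2)$, which excludes degenerate intermediate slices. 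Given a lattice point of $Q_j$, the nearest valid $k_1$-value on level $j+1$ differs by at most $1$, and likewise for $l_1$, so there is a move to $Q_{j+1}$ changing each coordinate by at most $1$; if this move is of the forbidden type $(\varepsilon_k,\varepsilon_l)=\pm(1,-1)$, I would first make a single admissible intra-slice move inside $Q_j$ (raising or lowering $k_1$, or else $l_1$, by one), after which the crossing move becomes of type $(\pm1,0)$ or $(0,\pm1)$ and hence admissible. That the room for this detour exists is exactly where the slope-$\pm1$ behavior of the relevant endpoint forces the strict inequality $\mu_1>\mu_2$ (resp.\ $\nu_1>\nu_2$), while $2j\le k-2$ supplies the other end.

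Finally, to assemble the path: given valid pairs with contents $\alpha$ and $\widetilde\alpha$, walk inside the source slice, then move one content level at a time (up or down, via the crossings above) to the slice of $\widetilde\alpha$, then walk inside that slice to the target; all intermediate triples lie in $Q$ and consecutive values of $\Delta$ differ by at most $2$, which is the claim. The main obstacle is precisely the inter-slice step: the naive ``change $\alpha_2$ by one'' move need not be admissible, and one must check --- using the explicit inequalities of Claim~\ref{claim:2rowLR} together with the size bound (\ref{eqn:kmin}) --- that a bounded intra-slice detour always restores admissibility and that no slice traversed is empty. Everything else is a routine interval computation.
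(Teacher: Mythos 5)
Your proposal is correct, and it reaches the conclusion by a genuinely different route than the paper. The paper builds the sequence explicitly: when $\alpha=\widetilde\alpha$ it slides one box at a time between the rows of $\mu^{\downarrow k}$ and then of $\nu^{\downarrow k}$ (each step changing $\Delta$ by exactly $2$), and when $\alpha\neq\widetilde\alpha$ it lowers $\alpha_2$ one unit at a time, adjusting $\mu^{\downarrow k}_{(i)}$ or $\nu^{\downarrow k}_{(i)}$ only when forced and certifying validity of each intermediate pair by explicit surgery on LR tableaux. You instead treat Claim~\ref{claim:2rowLR} as the inequality description of a set of lattice points in the coordinates $(k_1,l_1,\alpha_2)$, note that $\Delta$ is affine with increments $\mp 2$ in $k_1,l_1$, and prove connectivity under moves changing $\Delta$ by at most $2$: each content-slice is a product of two integer intervals, the feasible values of $\alpha_2$ form an interval (every defining inequality is monotone in $j$, so any $j$ between two feasible values is feasible), and slice crossings are repaired by a one-step intra-slice detour. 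I checked the one step you flag as the main obstacle: if the crossing $j\to j+1$ is forced to be of the forbidden type $(+1,-1)$, then necessarily $k_1=j$ and $l_1=k-j$, while nonemptiness of the slice at $j+1$ forces $\nu_2\geq j+1$ and $2j+2\leq k$, so lowering $l_1$ to $k-j-1$ inside the slice at $j$ is always possible and renders the crossing admissible; so the argument closes. What your approach buys is uniformity — validity of every intermediate pair is read off from the inequalities of Claim~\ref{claim:2rowLR} rather than re-established by tableau modification — and it makes transparent why the bound in (\ref{eqn:May1hhh}) is $2$. What it costs is that the detour and nonemptiness bookkeeping, which you only sketch, must be written out in full. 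One small imprecision: your stated range $k-\min(\mu_1,\nu_1)\le j\le\min(\mu_2,\nu_2)$ for nonempty slices omits the constraint $2j\le k$ coming from Claim~\ref{claim:2rowLR}(II), but this is harmless since all you need is that the feasible $j$ form an interval.
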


\noindent \emph{Proof of Claim~\ref{claim:ineq2&3}:} First suppose that $\alpha = {\widetilde \alpha}$. 
By exchanging the roles of ($\mu^{\downarrow k},\nu^{\downarrow k}$) and (${\widetilde \mu}^{\downarrow k}, {\widetilde \nu}^{\downarrow k}$) if necessary, we may assume that $k_1 - {\widetilde k}_1 = j \geq 0$. Define 
\[\mu^{\downarrow k}_{(i+1)}=(\mu^{\downarrow k}_{(i)1}+1, \mu^{\downarrow k}_{(i)2}-1)\]
$0\leq i< j$. Also, set $\nu^{\downarrow k}_{(i)} = \nu^{\downarrow k}_{(0)}$ for all $0 < i \leq j$.
By definition of $j$, $\mu^{\downarrow k}_{(j)} = {\widetilde \mu}^{\downarrow k}$. Moving a single box
at a time, we construct $\nu^{\downarrow k}_{(i)}$ similarly for $i > j$ such that when $i=m$ we obtain
$\nu^{\downarrow k}$ (and we set $\mu_{i}^{\downarrow k}={\widetilde \mu}^{\downarrow k}$ for $j<i\leq m$). 
More precisely if $l_1={\widetilde l}_1$ then $j=m$. If $l_1>{\widetilde l}_1$ then 
set $\nu_{(i+1)}^{\downarrow k}=(\nu_{(i)1}^{\downarrow k}+1,\nu_{(i)2}^{\downarrow k}-1)$ for $j\leq i<m$. 
Finally if $l_1<{\widetilde l}_1$ we set $\nu_{(i+1)}^{\downarrow k}=(\nu_{(i)1}^{\downarrow k}-1,\nu_{(i)2}^{\downarrow k}+1)$ for $j\leq i<m$. 

Set $\alpha^{(i)}=\alpha={\widetilde \alpha}$ for $0\leq i\leq m$. It is a straightforward induction argument to
see that each $(\mu_{(i)}^{\downarrow k},\nu_{(i)}^{\downarrow k})$ is valid of content $\alpha^{(i)}$. Finally,
by construction, 
\begin{equation}
\label{eqn:April30xyz}
|(k^{(i)}_2-k^{(i)}_1+l^{(i)}_1-l^{(i)}_2)-(k^{(i-1)}_2-k^{(i-1)}_1+l^{(i-1)}_1-l^{(i-1)}_2)| =2,
\end{equation}
which implies (\ref{eqn:May1hhh}).

Now suppose that $\alpha \neq {\widetilde \alpha}$. We assume without loss of generality that $\alpha_2 > {\widetilde \alpha}_2$.  Let $m^{\star}:=\alpha_2-{\widetilde \alpha_2}>0$. Then, for $0\leq i \leq m^{\star}-1$ set
\begin{equation}
\alpha^{(i+1)} = (\alpha^{(i)}_{1}+1,\alpha^{(i)}_{2}-1),
\end{equation}
\begin{equation}
\label{eqn:May8sss}
\mu^{\downarrow k}_{(i+1)}=
\begin{cases}
\mu^{\downarrow k}_{(i)} \text{ \ \ \ \ \ \ \ \ \ \ \ \ \ \ \ \ \ \ \ \ \ \ \ \ \ \ \ \ \  if $c^{\mu}_{\mu^{\downarrow k}_{(i)}, \alpha^{(i+1)}}>0$}

\\
(\mu^{\downarrow k}_{(i)1}+1, \mu^{\downarrow k}_{(i)2}-1) \text{\ \ \  otherwise,}
\end{cases}
\end{equation}
and
\begin{equation}
\label{eqn:May8ttt}
\nu^{\downarrow k}_{(i+1)}=
\begin{cases}
\nu^{\downarrow k}_{(i)} \text{ \ \ \ \ \ \ \ \ \ \ \ \ \ \ \ \ \ \ \ \ \ \ \ \ \ \ \ \ \  if $c^{\nu}_{\nu^{\downarrow k}_{(i)}, \alpha^{(i+1)}}>0$}

\\
(\nu^{\downarrow k}_{(i)1}+1, \nu^{\downarrow k}_{(i)2}-1) \text{\ \ \  otherwise.}
\end{cases}
\end{equation}

It is straightforward to check 
\[
|(k^{(i)}_2-k^{(i)}_1+l^{(i)}_1-l^{(i)}_2)-(k^{(i-1)}_2-k^{(i-1)}_1+l^{(i-1)}_1-l^{(i-1)}_2)| \in \{0,2\}\]
and hence (\ref{eqn:May1hhh}) holds.

Thus, it remains to show that ($\mu^{\downarrow k}_{(i+1)}, \nu^{\downarrow k}_{(i+1)}$) is a valid pair of content
$\alpha^{(i+1)}$. By definition, the only concern is if $\mu^{\downarrow k}_{(i+1)}$ (respectively, 
$\nu^{\downarrow k}_{(i+1)}$) is obtained by applying the second case of (\ref{eqn:May8sss})  
(respectively, (\ref{eqn:May8ttt})). Now, suppose we applied the second case of (\ref{eqn:May8sss})
to obtain $\mu^{\downarrow k}_{(i+1)}$. Since, by induction, ($\mu^{\downarrow k}_{(i)}, \nu^{\downarrow k}_{(i)}$) is valid of content $\alpha^{(i)}$, there exists an LR tableau $T$ of shape 
$\mu/\mu^{\downarrow k}_{(i)}$ of content $\alpha^{(i)}$. The assumption $\alpha_2>\widetilde{\alpha}_2$
implies $\alpha_1<\widetilde{\alpha}_1$. This combined  with the induction hypothesis, 
the fact that $\mu^{\downarrow k}_{(i),2} + \alpha^{(i)}_1 = \mu_1$ holds when
$c^{\mu}_{\mu^{\downarrow k}_{(i)}, \alpha^{(i+1)}}=0$, and $\mu_1 \geq  \widetilde{\alpha}_1 > \alpha^{(i)}_1$, shows
\[(\mu^{\downarrow k}_{(i)1}+1, \mu^{\downarrow k}_{(i)2}-1)\in {\sf Par}_2.\]
Now, define $T'$ by modifying 
$T$ as follows: Move the leftmost $1$ in the first row and place it to the left of the leftmost entry of the second row.
Then change the leftmost $2$ in the second row into a $1$.  

By definition of $m^{\star}$, and the existence of $T$, there exists a (leftmost) $1$ in the first row and a $2$ in the second row. Hence the modification is well-defined for $0\leq i<m$. Moreover, it is clear $T'$ is semistandard, of content $\alpha^{(i+1)}$ and has shape $\mu/\mu_{(i+1)}^{\downarrow k}$. That
$T'$ is ballot follows easily from the fact $T$ is ballot. Hence $T'$ is an LR tableau of the desired type.

In the same way, if $\nu_{(i+1)}^{\downarrow k}$ is obtained from $\nu_{(i)}^{\downarrow k}$ using the second case of (\ref{eqn:May8ttt}), we
can modify an LR tableau $U$ of shape $\nu/\nu^{\downarrow k}_{(i)}$ of content $\alpha^{(i)}$ into an LR
tableau of shape $\nu/\nu_{(i+1)}^{\downarrow k}$ and content $\alpha^{(i+1)}$.

Summarizing, irregardless of which cases of (\ref{eqn:May8sss}) and (\ref{eqn:May8ttt}) are used
at each stage, by induction, $(\mu_{(i+1)}^{\downarrow k},\nu_{(i+1)}^{\downarrow k})$ is valid of content 
$\alpha^{(i+1)}$. Moreover when $i+1=m^{\star}$, we arrive at 
$({\mu}^{\downarrow k}_{(m^{\star})}, { \nu}^{\downarrow k}_{(m^{\star})})$ of content
$\widetilde \alpha$. We have therefore reduced to the $\alpha={\widetilde \alpha}$ case above. Applying the
argument of that case we continue this 
sequence to $({\widetilde \mu}^{\downarrow k}, {\widetilde\nu}^{\downarrow k})$.
\qed

\begin{claim}\label{claim:ineqall2&3}
If all valid pairs ($\mu^{\downarrow k}, \nu^{\downarrow k}$) satisfy (\ref{ineq2:def=k}) or (\ref{ineq3:def=k}) then one of the inequalities from (\ref{eqn:the2horn1})-(\ref{ineq:lin}) is violated.
\end{claim}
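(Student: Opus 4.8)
The plan is a three-stage argument: a symmetry reduction to the case where every valid pair has $\Delta(\mu^{\downarrow k},\nu^{\downarrow k})\geq 2$; a deduction that then every valid pair must satisfy (\ref{ineq3:def=k}); and finally the construction of one explicit valid pair with $k_2+l_1$ small enough to force a violation of the list (\ref{eqn:the2horn1})--(\ref{ineq:lin}). Throughout I would argue by contradiction, assuming the hypothesis of the claim together with the negation of its conclusion, i.e.\ that every inequality in (\ref{eqn:the2horn1})--(\ref{ineq:lin}) holds.

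First I would run the \emph{sign reduction}. By Claim~\ref{obser:2&3}, if any valid pair had $|\Delta(\mu^{\downarrow k},\nu^{\downarrow k})|\leq 1$ it would satisfy neither (\ref{ineq2:def=k}) nor (\ref{ineq3:def=k}), contradicting the hypothesis; hence every valid pair has $|\Delta(\mu^{\downarrow k},\nu^{\downarrow k})|\geq 2$. By Claim~\ref{claim:ineq2&3} any two valid pairs are joined by a chain of valid pairs along which $\Delta$ changes by at most $2$ per step, and such a chain cannot pass from a value $\geq 2$ to a value $\leq -2$ without assuming a value in $\{-1,0,1\}$; so either every valid pair has $\Delta\geq 2$, or every valid pair has $\Delta\leq -2$. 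Since swapping $\mu\leftrightarrow\nu$ negates $\Delta$, interchanges (\ref{ineq2:def=k}) with (\ref{ineq3:def=k}), preserves the hypothesis, fixes $N_{\mu,\nu,\lambda}$ by Lemma~\ref{prop:easy}(I), and only permutes (\ref{eqn:the2horn1})--(\ref{ineq:lin}), I may assume $\Delta(\mu^{\downarrow k},\nu^{\downarrow k})\geq 2$ for every valid pair. Now, when $\Delta(\mu^{\downarrow k},\nu^{\downarrow k})>0$ the right-hand side of (\ref{ineq2:def=k}) strictly exceeds that of (\ref{ineq3:def=k}), so (\ref{ineq2:def=k}) implies (\ref{ineq3:def=k}); combined with the hypothesis, every valid pair satisfies (\ref{ineq3:def=k}), which rearranges to $k_2+l_1>\mu_2+\nu_1-\lambda_2$.

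Next I would construct a valid pair that makes $k_2+l_1$ as small as the constraints of Claim~\ref{claim:2rowLR} allow, thereby contradicting the previous line. The building block is removal with content $\alpha=(\alpha_1,\alpha_2)$: from $\mu$ remove $k_1$ boxes in the first row and $k_2=k-k_1$ in the second, choosing $k_1$ as large as the conditions of Claim~\ref{claim:2rowLR} permit (which forces $k_2=\max(0,\lceil(k-\mu_1+\mu_2)/2\rceil)$ after optimizing $\alpha_2$), and similarly from $\nu$ make $l_1=k-l_2$ as small as possible. One then case-splits on the relative sizes of $\mu_1-\mu_2$, $\nu_1-\nu_2$, $\nu_2$, and $k$, using the bound $k\leq|\mu\wedge\nu|$ from (\ref{eqn:kmin}) at each step to verify all six conditions of Claim~\ref{claim:2rowLR}. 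In the simplest case $k\leq\mu_1-\mu_2$ and $k\leq\nu_2$ one gets $k_2=l_1=0$, so (\ref{ineq3:def=k}) reads $\lambda_2>\mu_2+\nu_1$ and the first inequality of (\ref{eqn:the2horn3}) is violated; the other cases, after substituting $\lambda_1+\lambda_2=\mu_1+\mu_2+\nu_1+\nu_2-2k$, should each yield either a violated member of (\ref{ineq:lin}) (e.g.\ $\mu_1-\mu_2>\nu_1+\nu_2+\lambda_1-\lambda_2$) or an outright impossibility such as $\lambda_2-\lambda_1>(\mu_1-\mu_2)+(\nu_1-\nu_2)$ with the right side nonnegative. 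Either way the assumption that (\ref{eqn:the2horn1})--(\ref{ineq:lin}) all hold collapses.

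I expect the bulk of the work — and the only real obstacle — to be this last stage: organizing the case analysis so that in each case the proposed pair is simultaneously genuinely valid (all of Claim~\ref{claim:2rowLR}(I)--(VI) checked, which is where (\ref{eqn:kmin}) is indispensable) and actually minimizes $k_2+l_1$, and then matching the resulting strict inequality to the correct member of (\ref{eqn:the2horn1})--(\ref{ineq:lin}). This is the same sort of bookkeeping as in the proof of Claim~\ref{claim:ineq1}, but with $k_2+l_1$ in place of $k_1+l_1$, so I anticipate a comparable (perhaps slightly larger) number of cases.
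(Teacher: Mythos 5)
Your first two stages coincide with the paper's own argument up to the reflection $\mu\leftrightarrow\nu$: the paper likewise uses Claims~\ref{obser:2&3} and~\ref{claim:ineq2&3} to conclude that $\Delta$ has constant sign and absolute value at least $2$ on all valid pairs, normalizes the sign via the symmetry of Lemma~\ref{prop:easy}(I), and observes that under that sign one of (\ref{ineq2:def=k}), (\ref{ineq3:def=k}) implies the other, so a single one of them holds for every valid pair. (The paper normalizes to $\Delta<-1$ and works with (\ref{ineq2:def=k}); your choice of $\Delta\geq 2$ and (\ref{ineq3:def=k}) is the mirror image and equally fine.) Up to this point your proposal is correct and is essentially the paper's proof.

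The gap is in the third stage, and it is more than bookkeeping. You describe the construction as if $k_2$ and $l_1$ could be minimized independently (``choosing $k_1$ as large as the conditions permit \ldots and similarly from $\nu$ make $l_1$ as small as possible''), but the two removals are coupled through the \emph{common} content $\alpha$ in Claim~\ref{claim:2rowLR}: increasing $\alpha_2$ relaxes condition (VI) on the $\mu$-side (allowing $k_1$ larger, hence $k_2$ smaller), while condition (V) forces $l_1\geq\alpha_2$ on the $\nu$-side. So minimizing $k_2+l_1$ is a joint optimization over $\alpha$, and---more seriously---the $\alpha$ that is optimal for one factor need not be contained in the other partition at all. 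This is exactly what the paper's hardest case (its Case~4, $k>\nu_1-\nu_2$ in its normalization) confronts: the balanced removal produces an $\alpha$ that may fail to satisfy $\alpha\subseteq\mu$, and the resolution requires (i) showing that if $\alpha\subseteq\mu$ \emph{did} hold, the resulting valid pair would have $|\Delta|\leq 1$, contradicting the standing sign assumption---so the global hypothesis on $\Delta$ must be re-invoked inside the case analysis, not only in stage one---followed by (ii) separate constructions for the subcases $\mu_1<\alpha_1$ and $\mu_2<\alpha_2$, each matched to a different member of (\ref{eqn:the2horn1})--(\ref{ineq:lin}). Your sketch anticipates neither step, and your one worked case ($k\leq\mu_1-\mu_2$, $k\leq\nu_2$, where $\alpha=(k)$ serves both factors) is precisely the case in which the coupling is invisible. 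Until the remaining regimes of $k$ are carried out with conditions (I)--(VI) and the containment of $\alpha$ actually verified, the claim is not proved.
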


\noindent \emph{Proof of Claim~\ref{claim:ineqall2&3}}: Claim~\ref{obser:2&3} says that $|\Delta(\mu^{\downarrow k},\nu^{\downarrow k})|\leq 1$ cannot occur.

If we have two valid pairs $(\mu^{\downarrow k}, \nu^{\downarrow k})$, $({\widetilde \mu}^{\downarrow k},{\widetilde \nu}^{\downarrow k})$ satisfying 
\[\Delta(\mu^{\downarrow k},\nu^{\downarrow k})<-1 \text{\ and $\Delta({\widetilde \mu}^{\downarrow k},{\widetilde \nu}^{\downarrow k})>1$,}\] 
then by Claim~\ref{claim:ineq2&3} there is a sequence $(\mu^{\downarrow k}_{(0)},\nu^{\downarrow k}_{(0)})
=(\mu^{\downarrow k},\nu^{\downarrow k})$, $(\mu^{\downarrow k}_{(1)},\nu^{\downarrow k}_{(1)})$ $\ldots$ 
$(\mu^{\downarrow k}_{(m)},\nu^{\downarrow k}_{(m)})= ({\widetilde \mu}^{\downarrow k},{\widetilde \nu}^{\downarrow k})$ such that $|\Delta(\mu^{\downarrow k}_{(i)},\nu^{\downarrow k}_{(i)})-\Delta(\mu^{\downarrow k}_{(i-1)},\nu^{\downarrow k}_{(i-1)})|\leq 2$ for all $i \in [m]$. Hence for some $j$,
$\Delta(\mu^{\downarrow k}_{(j)},\nu^{\downarrow k}_{(j)})\in \{-1,0,1\}$. However, in that case,
($\mu^{\downarrow k}_{(j)},\nu^{\downarrow k}_{(j)}$) contradicts our hypothesis, by
Claim~\ref{obser:2&3}. 

Since $\Delta(\mu^{\downarrow k}, \nu^{\downarrow k})=-\Delta(\nu^{\downarrow k}, \mu^{\downarrow k})$, by Lemma~\ref{prop:easy}(I), we may assume $\Delta(\mu^{\downarrow k},\nu^{\downarrow k})<-1$. By definition this means $\mu_1+\nu_2-k_1-l_2 < \mu_2+\nu_1-k_2-l_1$. If furthermore $\lambda_2 > \mu_2+\nu_1-k_2-l_1$ 
then $\lambda_2 > \mu_1+\nu_2-k_1-l_2$. That is, if ($\mu^{\downarrow k}, \nu^{\downarrow k}$) satisfies (\ref{ineq3:def=k}) then  ($\mu^{\downarrow k}, \nu^{\downarrow k}$) satisfies (\ref{ineq2:def=k}). 
Thus, henceforth we assume ($\mu^{\downarrow k}, \nu^{\downarrow k}$) satisfies 
\begin{equation}
\label{May14abc}
\Delta(\mu^{\downarrow k},\nu^{\downarrow k})<-1 \text{\ and (\ref{ineq2:def=k}).}
\end{equation}

We have four cases, depending on $k$. We appeal to Claim~\ref{claim:2rowLR} in each case.

\noindent \emph{Case 1 ($k \leq \mu_2,\nu_1-\nu_2$):} $\mu^{\downarrow k} = (\mu_1,\mu_2-k)$, $\nu^{\downarrow k} = (\nu_1-k,\nu_2)$ is a valid pair with content $\alpha = (k)$. 
We have $k_1 = l_2 = 0$ and hence (\ref{ineq2:def=k}) says $\lambda_2 > \mu_1+\nu_2$ violating (\ref{eqn:the2horn2}).

\noindent \emph{Case 2 ($\mu_2 < k \leq \mu_1,\nu_1-\nu_2$):} $\mu^{\downarrow k} = (\mu_1-(k-\mu_2))$, $\nu^{\downarrow k} = (\nu_1-k,\nu_2)$ is a valid pair with content $\alpha = (k)$. By (\ref{ineq2:def=k}) combined with (\ref{eqn:May1aaa}), 
\begin{equation}
\label{eqn:May7aaa}
\lambda_1<\mu_2+\nu_1-k_2-l_1.
\end{equation}
We will use this inequality here and in the cases below. In the present case,
$k_2=0, l_1=k$ and thus (\ref{eqn:May7aaa}) says $\lambda_2 > \mu_1+\nu_2-k+\mu_2$. Combining with
(\ref{ineq2:def=k}) gives
 \[\lambda_1-\lambda_2 < \nu_1-\nu_2-\mu_1-\mu_2,\]
which violates (\ref{ineq:lin}).

\noindent \emph{Case 3 ($\mu_1 < k \leq \nu_1-\nu_2$):} Since $\nu_2 \geq \alpha_2 \geq k-\mu_1$ and $\mu_1 \leq \nu_1-\nu_2+k-\mu_1$, we have a valid pair $\mu^{\downarrow k} = (\mu_1-(k-\mu_2)), \nu^{\downarrow k} = (\nu_1-\mu_1,\nu_2-(k-\mu_1))$ with content $\alpha = (\mu_1,k-\mu_1)$. We have $k_2 = \mu_2$ and $l_1 = \mu_1$ and thus by (\ref{eqn:May7aaa}), 
\[\lambda_1 < \mu_2+\nu_1-\mu_2-\mu_1 = \nu_1-\mu_1,\]
which violates (\ref{eqn:the2horn1}).\\
\noindent \emph{Case 4 ($k > \nu_1-\nu_2$):} Let 
\[\alpha = \left(\nu_1-\nu_2+\left\lceil \frac{k-\nu_1+\nu_2}{2}\right\rceil, \left\lfloor \frac{k-\nu_1+\nu_2}{2}\right\rfloor\right), \text{ and}\] 
\[\nu^{\downarrow k} = \left( \nu_2-\left\lfloor \frac{k-\nu_1+\nu_2}{2}\right\rfloor, \nu_2-\left\lceil \frac{k-\nu_1+\nu_2}{2}\right\rceil\right).\]

One can check that there is a LR tableau of shape $\nu/\nu^{\downarrow k}$ and content $\alpha$ by verifying the conditions (I)-(VI) of Claim~\ref{claim:2rowLR}. In particular $\alpha\subseteq \nu$. If $\alpha \subseteq \mu$ as well then since $s_{\mu/\alpha}\neq 0$, by (\ref{eqn:skew}) we can find $\mu^{\downarrow k}$ such that ($\mu^{\downarrow k},\nu^{\downarrow k}$) is valid of content $\alpha$. However, in that case 
\[(\nu_1-l_1)-(\nu_2-l_2) =\nu^{\downarrow k}_1 - \nu^{\downarrow k}_2\leq 1,\] 
and hence
\begin{align*}
\Delta(\mu^{\downarrow k},\nu^{\downarrow k}):= & \mu_1+\nu_2-k_1-l_2-(\mu_2+\nu_1-k_2-l_1)\\
 = &\mu_1-k_1-(\mu_2-k_2) +\nu_2-\nu_1+l_1-l_2\\
 = & (\mu^{\downarrow k}_1-\mu^{\downarrow k}_2) -[(\nu_1-l_1)-(\nu_2-l_2)]\\
 \geq &-1.
 \end{align*}
 This would contradict the assumption $\Delta(\mu^{\downarrow k},\nu^{\downarrow k})<-1$.
  Therefore we may assume either $\mu_1<\alpha_1$ or $\mu_2 < \alpha_2$. 
 
 First suppose $\mu_1 < \alpha_1$. Using this assumption, and the definition of $\alpha_1$ one verifies
 the conditions (II) and (VI) Claim~\ref{claim:2rowLR}. It follows that
\[\mu^{\downarrow k}_{(1)} = (\mu_1-k+\mu_2),\nu^{\downarrow k}_{(1)} = (\nu_1-\mu_1,\nu_2-(k-\mu_1))\]
is a valid pair with content $\overline{\alpha} = (\mu_1,k-\mu_1)$. 
Now we have $k_2 = \mu_2$ and $l_1 = \mu_1$ and thus (\ref{eqn:May7aaa}) states 
\[\lambda_1 < \mu_2+\nu_1-\mu_2-\mu_1 = \nu_1-\mu_1.\]
This violates the second inequality of (\ref{eqn:the2horn1}). 

Now suppose $\mu_2 < \alpha_2$. Using this assumption,
\[\mu^{\downarrow k}_{(2)} = (\mu_1-k+\mu_2),\nu^{\downarrow k}_{(2)} = 
(\nu_1-[\nu_1-\nu_2+\mu_2], \nu_2-[\nu_2-\nu_1+k-\mu_2])\]
gives a valid pair of content ${\overline \alpha} = (k-\mu_2,\mu_2)$. 
Now we have $k_2 = \mu_2$ and $l_1 = \nu_1-\nu_2+\mu_2$ and so here (\ref{eqn:May7aaa}) is
\[\lambda_1 < \mu_2+\nu_1-(\mu_2)-(\nu_1-\nu_2+\mu_2) = \nu_2-\mu_2.\]
This gives a violation of the second equation of (\ref{eqn:the2horn2}).\qed

\noindent
\emph{Conclusion of the proof of Theorem~\ref{prop:r=2,def=k}:} If all valid pairs satisfy
(\ref{ineq2:def=k}) or (\ref{ineq3:def=k}), we are done by Claim~\ref{claim:ineqall2&3}.
Since by Claim~\ref{claim:May2nnn}, at least one of (\ref{ineq1:def=k}), (\ref{ineq2:def=k}) or (\ref{ineq3:def=k}) holds for valid pairs,
we may assume there is a valid pair $(\mu^{\downarrow k}, \nu^{\downarrow k})$ such that (\ref{ineq1:def=k}) holds. If in fact, \emph{all} valid pairs satisfy (\ref{ineq1:def=k}), we are done by Claim~\ref{claim:ineq1}. Hence we may also suppose there is a valid pair $({\widetilde \mu}^{\downarrow k}, {\widetilde \nu}^{\downarrow k})$ that does not satisfy (\ref{ineq1:def=k}).

Let us consider the sequence of valid pairs 
\[(\mu^{\downarrow k}_{(0)}, \nu^{\downarrow k}_{(0)}):=
(\mu^{\downarrow k}, \nu^{\downarrow k})
, (\mu^{\downarrow k}_{(1)}, \nu^{\downarrow k}_{(1)}), \ldots , 
(\mu^{\downarrow k}_{(m)}, \nu^{\downarrow k}_{(m)}):=({\widetilde \mu}^{\downarrow k}, {\widetilde \nu}^{\downarrow k})
\] 
where $(\mu^{\downarrow k}_{(i)}, \nu^{\downarrow k}_{(i)})\mapsto (\mu^{\downarrow k}_{(i+1)}, \nu^{\downarrow k}_{(i+1)})$ by Claim~\ref{claim:ineq2&3}'s construction.

Combining the fact that $(\mu^{\downarrow k}_{(0)}, \nu^{\downarrow k}_{(0)})=(\mu^{\downarrow k}, \nu^{\downarrow k})$ is a valid pair satisfying (\ref{ineq1:def=k}) with (\ref{eqn:May1aaa}) and Claim~\ref{claim:2rowLR}(IV),
\[\lambda_2< \mu^{\downarrow k}_{(0) 2} + \nu^{\downarrow k}_{(0) 2}-2k+k_1+l_1<\mu^{\downarrow k}_{(0) 2} + \nu^{\downarrow k}_{(0) 2}.\]
Hence
\begin{equation}
\label{eqn:April30bbb}
\lambda_2 < \mu^{\downarrow k}_{(0) 2} + \nu^{\downarrow k}_{(0) 2} \leq \min\{\mu^{\downarrow k}_{(0) 1} + \nu^{\downarrow k}_{(0) 2}, \mu^{\downarrow k}_{(0) 2} + \nu^{\downarrow k}_{(0) 1}\}.
\end{equation}

By examining Claim~\ref{claim:ineq2&3}'s construction (for both $\alpha = {\widetilde \alpha}$ and $\alpha \neq {\widetilde \alpha}$), it is straightforward to see that
\begin{equation}\label{May14xyz}
    |\min\{\mu^{\downarrow k}_{(i) 1} + \nu^{\downarrow k}_{(i) 2}, \mu^{\downarrow k}_{(i) 2} + \nu^{\downarrow k}_{(i) 1}\}-\min\{\mu^{\downarrow k}_{(i+1) 1} + \nu^{\downarrow k}_{(i+1) 2}, \mu^{\downarrow k}_{(i+1) 2} + \nu^{\downarrow k}_{(i+1) 1}\}|\leq 1.
\end{equation}

Inductively, if (\ref{ineq1:def=k}) holds for ($\mu^{\downarrow k}_{(i)}, \nu^{\downarrow k}_{(i)}$), then by the same
reasoning as for (\ref{eqn:April30bbb}),
\begin{align*}
    \lambda_2 &\leq \mu^{\downarrow k}_{(i) 2} + \nu^{\downarrow k}_{(i) 2} - 1\\
    & \leq \min\{\mu^{\downarrow k}_{(i) 1} + \nu^{\downarrow k}_{(i) 2}, \mu^{\downarrow k}_{(i) 2} + \nu^{\downarrow k}_{(i) 1}\} - 1.
\end{align*}
Combining with (\ref{May14xyz}), we get
    \[\lambda_2 \leq \min\{\mu^{\downarrow k}_{(i+1) 1} + \nu^{\downarrow k}_{(i+1) 2}, \mu^{\downarrow k}_{(i+1) 2} + \nu^{\downarrow k}_{(i+1) 1}\}.\]
This means ($\mu^{\downarrow k}_{(i+1)}, \nu^{\downarrow k}_{(i+1)}$)
violates (\ref{ineq2:def=k}) and (\ref{ineq3:def=k}); consequently, (\ref{ineq1:def=k}) holds for this valid pair. Therefore by induction, ($\mu^{\downarrow k}_{(m)}, \nu^{\downarrow k}_{(m)}$) satisfies (\ref{ineq1:def=k}), which contradicts the choice of ($\mu^{\downarrow k}_{(m)}, \nu^{\downarrow k}_{(m)}$).\qed

\subsection{Refinements?}
A conjecture of W.~Fulton (proved in \cite{KTW}) states that 
\[ c_{\mu,\nu}^{\lambda}=1\implies c_{k\mu,k\nu}^{k\lambda}=1, \  \ \forall k\geq 1.\] 

\begin{example}[Counterexample to analogue of W.~Fulton's conjecture]
One checks that
\[N_{(1,1),(1,1),(1,1)}=(c_{(1),(1)}^{(1)})^3=1 \text{\ but 
$N_{(2,2),(2,2),(2,2)}=(c_{(1,1),(1,1)}^{(1,1)})^3+(c_{(2),(2)}^{(2)})^3=2$.}\]
Hence, the analogue of Fulton's conjecture for $N_{\nu,\mu,\lambda}$ is false.
\qed
\end{example}

Define a function 
\[{\mathfrak c}_{\mu,\nu}^{\lambda}:{\mathbb Z}_{\geq 1}\to {\mathbb N} \text{ \ 
by $k\mapsto c_{k\mu,k\nu}^{k\lambda}$}.\] 
A conjecture of R.~C.~King-C.~Tollu-F.~Toumazet \cite{King} asserts that this function is interpolated
by a polynomial with nonnegative rational coefficients. The polynomiality property was
proved by H.~Derksen-J.~Weyman \cite{Derksen.Weyman}. Consequently, 
${\mathfrak c}_{\mu,\nu}^{\lambda}$ is called the \emph{Littlewood-Richardson polynomial}.
(The positivity conjecture
remains open in general.)

Similarly, let us define the \emph{Newell-Littlewood function}:
\[{\mathfrak N}_{\mu,\nu,\lambda}:{\mathbb Z}_{\geq 1}\to {\mathbb N} \text{ by $k\mapsto N_{k\mu,k\nu,k\lambda}$.}\] 
The following shows that 
${\mathfrak N}_{\mu,\nu,\lambda}(k)$ cannot always be interpolated by a single
polynomial.

\begin{theorem}[Non-polynomiality]
\label{prop:floorex}
There exist $\lambda,\mu,\nu$ such that ${\mathfrak N}_{\mu, \nu,\lambda}(k)\not\in {\mathbb R}[k]$.
\end{theorem}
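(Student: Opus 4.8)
The plan is to verify the theorem with the smallest possible example, $\lambda=\mu=\nu=(1,1)$, by computing $\mathfrak{N}_{(1,1),(1,1),(1,1)}(k)=N_{(k,k),(k,k),(k,k)}$ in closed form and observing that it is not a polynomial in $k$; this also refines the Fulton-type counterexample recorded above.

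First I would evaluate the Littlewood--Richardson coefficients for the $2\times k$ rectangle. For $\alpha=(\alpha_1,\alpha_2)\subseteq(k,k)$ the skew shape $(k,k)/\alpha$ has two rows, and the Littlewood--Richardson rule gives a direct check: the top row is forced to consist of $1$'s, after which column-strictness and ballotness force the bottom row to be a block of $1$'s followed by a block of $2$'s. Hence $(k,k)/\alpha$ has exactly one LR tableau, of content $(k-\alpha_2,\,k-\alpha_1)$ (equivalently, this is the classical identity $s_{(k,k)/\alpha}=s_{(k-\alpha_2,k-\alpha_1)}$), so
\[
c_{\alpha,\beta}^{(k,k)}=\begin{cases}1 & \text{if } \beta=\alpha^{c}:=(k-\alpha_2,k-\alpha_1),\\ 0 & \text{otherwise.}\end{cases}
\]
In particular $c_{\alpha,\beta}^{(k,k)}\neq 0$ forces $\alpha,\beta\subseteq(k,k)$, on which set $\alpha\mapsto\alpha^{c}$ is an involution.

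Next I would substitute this into $N_{(k,k),(k,k),(k,k)}=\sum_{\alpha,\beta,\gamma}c_{\alpha,\beta}^{(k,k)}c_{\alpha,\gamma}^{(k,k)}c_{\beta,\gamma}^{(k,k)}$. A summand is nonzero exactly when $\beta=\alpha^{c}$, $\gamma=\alpha^{c}$ and $\gamma=\beta^{c}$; since $\beta^{c}=\alpha^{cc}=\alpha$, the last condition reads $\gamma=\alpha$, and combined with $\gamma=\alpha^{c}$ this forces $\alpha=\alpha^{c}$, i.e.\ $\alpha_1+\alpha_2=k$ with $k\geq\alpha_1\geq\alpha_2\geq 0$, and then $\beta=\gamma=\alpha$. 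Each such $\alpha$ contributes a single $1$, so
\[
\mathfrak{N}_{(1,1),(1,1),(1,1)}(k)=N_{(k,k),(k,k),(k,k)}=\#\{(\alpha_1,\alpha_2)\in\mathbb{Z}^2:\alpha_1+\alpha_2=k,\ \alpha_1\geq\alpha_2\geq 0\}=\left\lfloor k/2\right\rfloor+1.
\]
This is not a polynomial: if $\mathfrak{N}_{(1,1),(1,1),(1,1)}=p\in\mathbb{R}[k]$, then $p$ agrees with $\tfrac{k}{2}+1$ at every even integer, hence $p(k)=\tfrac{k}{2}+1$ identically, contradicting $p(3)=\tfrac{5}{2}\neq 2=\mathfrak{N}_{(1,1),(1,1),(1,1)}(3)$. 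The argument is routine throughout; the only place meriting any care is the two-row rectangle evaluation of $c_{\alpha,\beta}^{(k,k)}$ and the observation that the three complementation conditions collapse to $\alpha=\alpha^{c}$.
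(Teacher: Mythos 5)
Your proposal is correct and follows essentially the same route as the paper: both use the example $\lambda=\mu=\nu=(1,1)$ and reduce to the rectangle complementation identity $c_{\alpha,\beta}^{(k,k)}=\delta_{\beta,\alpha^{\vee}}$, concluding that the only witnesses are $\alpha=\beta=\gamma=(j,k-j)$ with $2j\geq k$, so $\mathfrak{N}_{(1,1),(1,1),(1,1)}(k)=\lfloor k/2\rfloor+1=\lceil (k+1)/2\rceil$. The only cosmetic difference is that you verify the two-row complementation fact directly from the Littlewood--Richardson rule, whereas the paper quotes it as a Schubert-calculus duality.
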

\begin{proof}
We will show
${\mathfrak N}_{(1,1),(1,1),(1,1)}(k)=\left\lceil{\frac{k+1}{2}}\right\rceil$,
which is clearly non-polynomial.

Let $\mu,\nu,\lambda=(1,1)$ and suppose $\alpha,\beta,\gamma$ satisfy
$c_{\alpha,\beta}^{k\mu}c_{\alpha,\gamma}^{k\nu}c_{\beta,\gamma}^{k\lambda}>0$, \emph{i.e.},
$c_{\alpha,\beta}^{(k,k)}c_{\alpha,\gamma}^{(k,k)}c_{\beta,\gamma}^{(k,k)}>0$. The claim is that the only possible $(\alpha,\beta,\gamma)$ are 
\begin{equation}
\label{eqn:thechoice}
\alpha=\beta=\gamma=(j,k-j) \text{\ where $\left\lfloor\frac{k+1}{2}\right\rfloor \leq j\leq k$,}
\end{equation}
and in this case the contribution to (\ref{eqn:Newell-Littlewood}) is 
$(c_{(j,k-j),(j,k-j)}^{(k,k)})^3=1$. 
This would complete the proof as there are $\lceil\frac{k+1}{2}\rceil$ such $j$. That 
$c_{(j,k-j),(j,k-j)}^{(k,k)}=1$ 
follows easily from the Littlewood-Richardson rule. Hence it only remains to rule out other possible $(\alpha,\beta,\gamma)$.
Indeed, given such a triple, since $c_{\alpha,\beta}^{(k,k)}>0$ we must have $|\alpha|+|\beta|=2k$. Similarly, we obtain  
$|\alpha|+|\gamma|=2k \text{ and $|\beta|+|\gamma|=2k$}$
which together imply 
$|\alpha|=|\beta|=|\gamma|=k$. 
To conclude, we apply another
fact about Littlewood-Richardson coefficients that has a Schubert calculus provenance.
That is, 
$c_{\alpha,\beta}^{(m-\ell)^{\ell}}=\delta_{\beta,\alpha^{\vee}}$ where
$\alpha^\vee$ is the $180$-degree rotation of $(m-\ell)^{\ell}\setminus \beta$ (as used in Claim~\ref{claim:Apr1abc}).\footnote{Let $\sigma_{\alpha}$
denote the Schubert class for $\alpha\subset (m-\ell)^\ell$. The underlying Schubert calculus
statement is that if $|\alpha|+|\beta|=\dim {\sf Gr}_{\ell}({\mathbb C}^m)(=\ell\times (m-\ell))$
then $\sigma_{\alpha}\cup\sigma_{\beta}=\delta_{\beta,\alpha^{\vee}}\sigma_{(m-\ell)^\ell}\in H^*({\sf Gr}_{\ell}({\mathbb C}^m))$.}
In our case $\ell=2$ and $m=k+2$; moreover $(j,k-j)^{\vee}=(j,k-j)$. From this, the result follows.
\end{proof}

\begin{example}
Let 
${\overline {\mathfrak N}_{\mu,\nu,\lambda}}(k):={\mathfrak N}_{\mu,\nu}^{\lambda}(2k-1),
{\widetilde {\mathfrak N}_{\mu,\nu,\lambda}}(k):={\mathfrak N}_{\mu,\nu,\lambda}(2k)$.
By Proposition~\ref{prop:floorex}, 
\[{\overline {\mathfrak N}_{(1,1),(1,1),(1,1)}}=k \text{\ and ${\widetilde {\mathfrak N}_{(1,1),(1,1),(1,1)}}=k+1$.}\] 
For another example, it seems
that 
\[{\overline {\mathfrak N}}_{(2,1,1),(2,1,1),(1,1,1,1)}=\frac{1}{3}k(k+2)(k+1) \text{\ and
${\widetilde {\mathfrak N}}_{(2,1,1),(2,1,1),(1,1,1,1)}=\frac{1}{6}(2k+3)(k+2)(k+1)$.}\]
This would suggest ${\overline {\mathfrak N}_{\mu,\nu,\lambda}}, {\widetilde {\mathfrak N}_{\mu,\nu,\lambda}}\in {\mathbb Q}_{\geq 0}[k]$. However, when $\lambda=\mu=\nu=(2,1,1)$, the values of ${\mathfrak N}_{\mu,\nu,\lambda}(k)$ for $k=1,2,\ldots,11$ are 
$4,18,51,141,315,676,1288,2370,4047,6720,10605$.
None of ${\mathfrak N}_{\mu,\nu,\lambda}, {\overline {\mathfrak N}_{\mu,\nu,\lambda}}, {\widetilde {\mathfrak N}_{\mu,\nu,\lambda}}$
seem to have a nice interpolation, although it is possible we do not have
sufficiently many values.
\qed
\end{example}

\subsection{Complexity of computing $N_{\mu,\nu,\lambda}$}
\label{sec:complexity}
Following  H.~Narayanan \cite{Narayanan}, T.~McAllister-J.~De Loera \cite{DeLoera}, and K.~D.~Mulmuley-H.~Narayanan-M.~Sohoni \cite{Mulmuley},
Theorem~\ref{thm:thepolytope} and Conjecture~\ref{theconj} have some implications
about the complexity of computing $N_{\mu,\nu,\lambda}$. For brevity, we limit ourselves to a sketch.

Given input $(\lambda,\mu,\nu)\in {\sf Par}_n$ (measured in terms of bit-size complexity)
there is the counting problem ${\tt NLvalue}$ which outputs $N_{\mu,\nu,\lambda}$.
By Lemma~\ref{prop:easy}(II), a subproblem is ${\tt LRvalue}$ (computation of 
$c_{\lambda,\mu}^{\nu}$). H.~Narayanan \cite{Narayanan} 
shows 
${\tt LRvalue}\in \#P\text{-complete}$ 
(thus, in particular, no polynomial time algorithm exists for this problem unless $P=NP$).
This implies ${\tt NLvalue}$ is $\#P$-hard. Theorem~\ref{thm:thepolytope} shows that
the problem is in $\#P$ since the vectors $(\alpha_i^j,\beta_i^j,\gamma_i^j)$ provide 
an efficient encoding of elements of a set counted by $N_{\mu,\nu,\lambda}$. Summarizing, 
\[{\tt NLvalue}\in \#P\text{-complete}.\]

The decision problem ${\tt NLnonzero}$ decides if
$N_{\mu,\nu,\lambda}>0$. Theorem~\ref{thm:thepolytope} implies 
${\tt NLnonzero}\in NP$. In \cite{DeLoera, Mulmuley} it is shown that the analogous
problem ${\tt LRnonzero}$ (deciding $c_{\lambda,\mu}^{\nu}>0$) can be done in
polynomial time. Their proof relies on the Saturation Theorem for $c_{\lambda,\mu}^{\nu}$.

Conjecture~\ref{theconj} implies 
${\tt NLnonzero}\in P$ as well. In brief, Conjecture~\ref{theconj} actually shows
\[N_{\mu,\nu,\lambda}\neq 0 \iff {\mathcal P}_{\mu,\nu,\lambda}\neq \emptyset.\] 
The ``$\Rightarrow$'' implication is by Theorem~\ref{thm:thepolytope}. For ``$\Leftarrow$'', we may assume, by Lemma~\ref{prop:easy}(V), that $|\lambda|+|\mu|+|\nu|\equiv 0 \ (\!\!\!\mod 2)$. Then ${\mathcal P}_{\mu,\nu,\lambda}\neq \emptyset$ implies
${\mathcal P}_{\mu,\nu,\lambda}$ contains a rational point $\vec p$. Then choose $k\in {\mathbb Z}_{>0}$ such that $k\cdot \vec p\in k{\mathcal P}_{\mu,\nu,\lambda}$ 
is a lattice point. By construction, 
$k{\mathcal P}_{\mu,\nu,\lambda}={\mathcal P}_{k\mu,k\nu,k\lambda}$
and so by Theorem~\ref{thm:thepolytope}, $N_{k\mu,k\nu,k\lambda}>0$. Conjecture~\ref{theconj}
then says $N_{\mu,\nu,\lambda}>0$. Finally, the inequalities defining the Newell-Littlewood polytope
are of the form $Ax\leq b$ where the entries of $A$ are $0,\pm 1$ whereas the entries of $b$ are integers.
Hence the polytope is \emph{combinatorial}, and one can appeal \'{E}.~Tardos' algorithm \cite{Grotschel, Tardos}
to decide if ${\mathcal P}_{\mu,\nu,\lambda}$ is feasible in strongly polynomial time. This completes the
conditional argument.

\section{Multiplicity-freeness}
\label{sec:multfree}
In Section~\ref{sec:polytope} we studied when
$N_{\lambda,\mu,\nu}=0$. We now look at a related problem, proving an analogue of
J.~R.~Stembridge's \cite[Theorem~3.1]{jrs} which characterizes pairs $(\mu,\nu)\in {\sf Par}$
such that (\ref{eqn:prodabc}) is multiplicity-free, \emph{i.e.}, 
$c_{\mu,\nu}^{\lambda}\in \{0,1\}$ for all $\lambda\in {\sf Par}$.

Call a pair $(\mu,\nu)\in {\sf Par}^2$ \emph{NL-multiplicity-free} if (\ref{eqn:polyver}) contains no multiplicity, 
\emph{i.e.}, each $N_{\mu,\nu,\lambda}\in\{0,1\}$ for all $\lambda\in {\sf Par}$. 

\begin{theorem}
\label{thm:mfrep}
A pair $(\mu,\nu)\in {\sf Par}^2$ is NL-multiplicity-free if and only if
\begin{itemize}
\item[(I)] $\mu$ or $\nu$ is either a single box or $\emptyset$;
\item[(II)] $\mu$ is a single row and $\nu$ is a rectangle (or vice versa); or
\item[(III)] $\mu$ is a single column and $\nu$ is a rectangle (or vice versa).
\end{itemize}
\end{theorem}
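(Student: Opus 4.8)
The strategy mirrors Stembridge's argument for \eqref{eqn:prodabc}, but must be carried out for $N_{\mu,\nu,\lambda}$. I would split the proof into the ``if'' direction (cases (I)--(III) force multiplicity-freeness) and the ``only if'' direction (everything else produces a multiplicity $\geq 2$).

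For the ``if'' direction, the key tool is Proposition~\ref{prop:reformulate}: $N_{\mu,\nu,\lambda}=\sum_{\alpha\subseteq\mu\wedge\nu}\langle s_{\mu/\alpha}s_{\nu/\alpha},s_\lambda\rangle$. In case (I), if $\nu$ is a single box then by Corollary~\ref{prop:boxcase} each $N_{\mu,(1),\lambda}\leq 1$ (the sum is over adding or removing a single box, with no repeats); the $\emptyset$ case is trivial. In case (II), say $\mu=(r)$ is a single row and $\nu$ a rectangle $(c^d)$: the only $\alpha\subseteq\mu\wedge\nu$ are single rows $\alpha=(a)$ with $a\leq\min(r,c)$, and $s_{\mu/\alpha}=s_{(r-a)}$ while $s_{\nu/\alpha}=s_{(c^d)/(a)}$. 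I would argue that across all valid $a$ the supports of $s_{(r-a)}\cdot s_{(c^d)/(a)}$ in the Schur basis are disjoint (they are distinguished by $\lambda_d$, the size of the last row, which detects $a$) and that each individual product is multiplicity-free --- the latter is a known multiplicity-free Schur product (row times near-rectangle), itself covered by Stembridge's theorem. Case (III) follows from case (II) by Lemma~\ref{prop:easy}(VI) (conjugation), since a single column is the conjugate of a single row and a rectangle is self-conjugate up to transpose.

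For the ``only if'' direction, suppose $(\mu,\nu)$ falls into none of (I)--(III); I must exhibit $\lambda$ with $N_{\mu,\nu,\lambda}\geq 2$. The cleanest route is to reduce to Stembridge's classification of Schur-multiplicity-free pairs. If $|\mu|+|\nu|=|\lambda|$ then $N_{\mu,\nu,\lambda}=c_{\mu,\nu}^\lambda$ by Lemma~\ref{prop:easy}(II), so \emph{any} pair $(\mu,\nu)$ that is not Schur-multiplicity-free is immediately not NL-multiplicity-free. Thus it remains to handle pairs $(\mu,\nu)$ that \emph{are} Schur-multiplicity-free (Stembridge's list: one of them a row/column/fat hook of bounded type, etc.) but fail our stricter conditions (I)--(III) --- for instance $\mu$ a single row and $\nu$ a two-column non-rectangle, or $\mu=\nu$ a single row of length $\geq 2$, or $\mu$ a rectangle and $\nu$ a rectangle with $\mu\ne$ row/column. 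For each such residual family I would produce an explicit $\alpha\subsetneq\mu\wedge\nu$ for which $s_{\mu/\alpha}s_{\nu/\alpha}$ has a Schur-expansion term $s_\lambda$ that \emph{also} appears (for the same $\lambda$) either in $s_\mu s_\nu$ (the $\alpha=\emptyset$ term) or in $s_{\mu/\theta}s_{\nu/\theta}$ for some other $\theta$; summing the two contributions gives $N_{\mu,\nu,\lambda}\geq 2$. A convenient uniform witness: take $\alpha$ to be a single box. Then $s_{\mu/(1)}s_{\nu/(1)}$ and $s_\mu s_\nu$ both contribute, and one checks their supports overlap unless $\mu$ or $\nu$ is itself essentially a box or one of the listed shapes; this overlap computation, shape by shape, is the technical heart.

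The main obstacle will be the bookkeeping in the ``only if'' direction: enumerating precisely which Stembridge-multiplicity-free pairs survive past conditions (I)--(III), and for each, choosing the witnessing $\alpha$ (or pair $\alpha,\theta$) and verifying the overlap of Schur supports --- this requires careful but elementary use of the Littlewood-Richardson rule and the skew Pieri-type expansions, and is where a ``self-contained proof'' in the spirit of Stembridge's will take the most care. The ``if'' direction, by contrast, reduces fairly mechanically to disjointness of supports plus known multiplicity-free Schur products.
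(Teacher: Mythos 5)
Your ``if'' direction is essentially the paper's: case (I) via Corollary~\ref{prop:boxcase}, case (III) from (II) by conjugation, and case (II) by observing that a rectangle admits at most one removal of a horizontal strip of each size (the paper packages exactly this as the Pieri-type rule, Proposition~\ref{thm:Pieri}). One correction there: the supports for different $a=|\alpha|$ are disjoint because $\deg\bigl(s_{\mu/\alpha}s_{\nu/\alpha}\bigr)=|\mu|+|\nu|-2a$ varies with $a$, not because $\lambda_d$ ``detects'' $a$ --- it does not, since the added horizontal strip may or may not touch row $d$ (e.g.\ for $\mu=(2)$, $\nu=(2,2)$ both $a=0$ and $a=1$ produce shapes with $\lambda_2=2$).

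The ``only if'' direction has a genuine gap. Your ``convenient uniform witness'' --- an overlap between the $\alpha=\emptyset$ term $s_\mu s_\nu$ and the $\alpha=(1)$ term $s_{\mu/(1)}s_{\nu/(1)}$ in Proposition~\ref{prop:reformulate} --- is impossible: these are homogeneous of degrees $|\mu|+|\nu|$ and $|\mu|+|\nu|-2$, so their Schur supports are automatically disjoint. Any multiplicity must come either from a single $\alpha$ with $\langle s_{\mu/\alpha}s_{\nu/\alpha},s_\lambda\rangle\geq 2$, or from two distinct $\alpha,\theta$ of the \emph{same size} contributing to the same $\lambda$. Moreover, after you dispose of the non-Schur-multiplicity-free pairs via Lemma~\ref{prop:easy}(II), the residue of Stembridge-multiplicity-free pairs outside (I)--(III) is large (row times fat hook, rectangle times rectangle, two-line rectangle times fat hook, \ldots), and you have no mechanism to handle it beyond case-by-case search. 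The paper avoids this explosion with a monotonicity lemma (Lemma~\ref{lem:jrsidea}: $N_{\mu\cup(t),\nu,\lambda\cup(t)}\geq N_{\mu,\nu,\lambda}$, deduced from Stembridge's Lemma~2.2), which reduces everything to two degree-consistent base claims: $N_{(k),\nu,\nu+(k-2)}\geq 2$ when $\nu$ is not a rectangle (two corners of $\nu$ give two witnesses with the same $\alpha=(1)$), and $N_{(k^2),(c^d),((c+k-2)^2)\cup(c^{d-2})}\geq 2$ (witnesses with $\alpha=(1,1)$ and $\alpha=(2)$, again of equal size). To complete your route you would need to replace the degree-inconsistent witness by witnesses of this kind and supply a comparable reduction device.
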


Before the proof, we pause to compare and contrast Theorem~\ref{thm:mfrep} with \cite[Theorems~3.1, 4.1]{jrs},
and with J.~R.~Stembridge's later work \cite{jrs:general}. Theorem~\ref{thm:mfrep}
is an analogue of \cite[Theorem~3.1]{jrs} in the sense that the Schur functions
$\{s_{\lambda}\}$ are universal characters for ${\sf GL}$, whereas $\{s_{[\lambda]}\}$
are universal characters for ${\sf Sp}$ (we repeat that by \cite[Theorem~2.3.4]{Koike},
Theorem~\ref{thm:mfrep} holds without change for {\sf SO}). A generalization of
\cite[Theorem~3.1]{jrs} is \cite[Theorem~4.1]{jrs}, which characterizes when a product of
Schur \emph{polynomials} $s_{\mu}(x_1,\ldots,x_n)s_{\nu}(x_1,\ldots,x_n)$ is multiplicity-free. This is a generalization since (\ref{eqn:thespecialization}) preserves 
multiplicity-freeness. 

Since $s_{\mu}(x_1,\ldots,x_n)$ is the character of the (finite) ${\sf GL}(V)$-module ${\mathbb S}_{\lambda}(V)$, \cite{jrs:general} provides the appropriate generalization to all other Weyl characters (associated to an irreducible representation
of a complex semisimple Lie algebra). However, unlike the ${\sf GL}$ story, the modification rules are non-positive
(see the discussion and references of Section~\ref{sec:earlierwork}). Nevertheless, by invoking \cite[Corollary~2.5.3]{Koike}, it should be possible to derive Theorem~\ref{thm:mfrep} 
from \cite{jrs:general} by translating the root-system language to partitions (we have not actually done this). That said,
our proof is different and self-contained, starting from (\ref{eqn:Newell-Littlewood}).
It is relatively short, and has a component 
(Lemma~\ref{lem:jrsidea}) which might be of some independent interest.

\begin{proof}
($\Leftarrow$) Suppose we are in case (I). If $\mu=\emptyset$, then $c_{\alpha,\beta}^\mu>0$ if and only if $\alpha=\beta= \emptyset$, in which case $c_{\alpha,\beta}^\mu = 1$.  Hence, $c_{\alpha,\gamma}^\nu=\delta_{\gamma,\nu}$. Therefore $N_{\emptyset,\nu,\lambda}=\delta_{\nu,\lambda}$.
 As a result, 
${s}_{[\emptyset]}   
 {s}_{[\nu]} = {s}_{[\nu]}$
 is multiplicity-free. Thus we may suppose $\mu=(1)$. This case is NL-multiplicity-free
 by Corollary~\ref{prop:boxcase}.
 
(III) follows from (II) by Lemma~\ref{prop:easy}(VI). 

Thus suppose we are in case (II). Without loss, let $\mu = (k)$ and let $\nu = (c^d)$.  
We apply Proposition~\ref{thm:Pieri}, and specifically (\ref{eqn:inotherwords}). Since $\nu$ is a rectangle, for any $0\leq j\leq k$ there is at most one way to remove a horizontal
strip of size $j$ from $\nu$. The result is a shape $\theta_u=(c^{d-1},u)$ where $0\leq u\leq c$. Straightforwardly, if $u\neq u'$ then one cannot add a horizontal strip of $k-j$ boxes to $\theta_u$
and separately to $\theta_{u'}$ and obtain the same $\lambda$. NL-multiplicity-freeness follows from this analysis.

($\Rightarrow$) Our argument is similar to (and uses) the one used in J.~Stembridge's
work \cite{jrs}. If $\alpha,\beta\in {\sf Par}$, by $\alpha\cup\beta$
we mean the partition obtained by sorting the (nonzero) parts in the multiset union
of $\alpha$ and $\beta$.

\begin{lemma}
\label{lem:jrsidea}
For all triples of partitions $\mu,\nu,\lambda$ and $t\in {\mathbb Z}_{\geq 0}$, \[N_{\mu\cup (t),\nu,\lambda\cup (t)}\geq N_{\mu,\nu,\lambda} \text{ and } N_{\mu+ (1^t),\nu,\lambda+ (1^t)}\geq N_{\mu,\nu,\lambda}.\]
\end{lemma}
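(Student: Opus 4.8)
The plan is to prove the two inequalities separately but by the same mechanism: exhibit an injection from the set of witnesses for $N_{\mu,\nu,\lambda}$ into the set of witnesses for the augmented triple, using the polytope model of Theorem~\ref{thm:thepolytope} (or, equivalently, the triple-of-LR-tableaux description). Actually, the cleanest route is the asymmetric formula of Proposition~\ref{prop:reformulate}: $N_{\mu,\nu,\lambda}=\sum_{\alpha\subseteq \mu\wedge\nu}\langle s_{\mu/\alpha}s_{\nu/\alpha},s_\lambda\rangle$. So it suffices to produce, for each $\alpha\subseteq\mu\wedge\nu$, an $\alpha'\subseteq(\mu\cup(t))\wedge\nu$ with $\langle s_{(\mu\cup(t))/\alpha'}\,s_{\nu/\alpha'},s_{\lambda\cup(t)}\rangle \geq \langle s_{\mu/\alpha}s_{\nu/\alpha},s_\lambda\rangle$, and these contributions should be distinguishable so the inequality survives summation.

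For the first inequality ($\mu\cup(t)$, $\lambda\cup(t)$), take the \emph{same} $\alpha$: since $\alpha\subseteq\mu\wedge\nu$ and adding a part of size $t$ to $\mu$ only enlarges $\mu$, we still have $\alpha\subseteq(\mu\cup(t))\wedge\nu$. The skew shape $(\mu\cup(t))/\alpha$ is $\mu/\alpha$ together with one extra disconnected row of length $t$ (possibly interleaved, but set-theoretically a disjoint row once we sort). Then $s_{(\mu\cup(t))/\alpha} = \sum_{\eta} c^{\mu/\alpha}_{?}\cdots$ — more usefully, $s_{(\mu\cup(t))/\alpha}\,s_{\nu/\alpha}$ expands with coefficient on $s_{\lambda\cup(t)}$ at least the coefficient of $s_\lambda$ in $s_{\mu/\alpha}s_{\nu/\alpha}$, because from any LR tableau realizing $\langle s_{\mu/\alpha}s_{\nu/\alpha},s_\lambda\rangle$ one builds an LR tableau of shape $(\lambda\cup(t))/\beta'$ by inserting the length-$t$ row filled with $1$'s in the appropriate place and shifting indices. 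The key point is an additivity/monotonicity lemma for skew Schur expansions: $\langle s_{\sigma}s_{\tau},s_{\rho\cup(t)}\rangle\ge \langle s_{\sigma'}s_{\tau},s_{\rho}\rangle$ when $\sigma$ is $\sigma'$ with a disjoint row of length $t$ prepended and that row lies strictly above everything. I would prove this by the Littlewood-Richardson rule directly: given an LR filling of $\rho/\beta$ with content $\tau$ (using $\sigma'=\beta$), prepend a row of $t$ boxes labelled $1$ at the top and reindex, checking semistandardness and the ballot condition are preserved.

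For the second inequality ($\mu+(1^t)$, $\lambda+(1^t)$), apply Lemma~\ref{prop:easy}(VI): conjugating sends $\mu+(1^t)\mapsto \mu'\cup(t)$ and $\lambda+(1^t)\mapsto\lambda'\cup(t)$ and $\nu\mapsto\nu'$, so $N_{\mu+(1^t),\nu,\lambda+(1^t)}=N_{\mu'\cup(t),\nu',\lambda'\cup(t)}\ge N_{\mu',\nu',\lambda'}=N_{\mu,\nu,\lambda}$ by the first inequality already established. This reduces the whole lemma to the single ``add a row'' statement.

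The main obstacle is the bookkeeping in the core LR-tableau injection: one must verify that prepending a row of $1$'s (and the induced relabelling $i\mapsto i+1$ of the rest) to a ballot semistandard skew tableau yields a ballot semistandard skew tableau of the enlarged shape, \emph{and} that the resulting map is injective across the different choices of $\alpha$ — i.e., that distinct $(\alpha,T)$ pairs for $N_{\mu,\nu,\lambda}$ map to distinct witnesses for the augmented triple, so that the per-$\alpha$ inequalities add up rather than collide. I expect injectivity to be easy (the construction is reversible: strip the top row of $1$'s and relabel back) but it needs to be stated carefully because the sum in Proposition~\ref{prop:reformulate} ranges over multiple $\alpha$, and the augmented-triple sum ranges over more $\alpha'$ than before — the injection must land in the portion of that larger sum indexed by $\alpha'=\alpha$.
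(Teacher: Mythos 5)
Your overall architecture coincides with the paper's: you compare the two Newell--Littlewood sums witness by witness via the injective map $(\alpha,\beta,\gamma)\mapsto(\alpha,\beta\cup(t),\gamma)$ (your per-$\alpha$ bookkeeping through Proposition~\ref{prop:reformulate} amounts to exactly this), and you deduce the column statement from the row statement by conjugation, Lemma~\ref{prop:easy}(VI), precisely as in the paper. That reduction is sound, and the injectivity concern you raise at the end is harmless, since $\beta\mapsto\beta\cup(t)$ is injective and the image lands in the terms of the larger sum indexed by the same $\alpha$ and $\gamma$.

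The gap is in the one ingredient you try to supply yourself: the inequality $c^{\kappa\cup(t)}_{\sigma\cup(t),\pi}\geq c^{\kappa}_{\sigma,\pi}$, which you need twice (once to pass from $\mu$ to $\mu\cup(t)$ and once from $\lambda$ to $\lambda\cup(t)$). The paper does not prove this; it cites \cite[Lemma~2.2]{jrs}. Your proposed proof --- prepend a row of $t$ boxes filled with $1$'s at the top and shift labels $i\mapsto i+1$ --- does not work as stated. In the relevant count for, say, $c^{\lambda\cup(t)}_{\beta\cup(t),\gamma}$ one counts LR tableaux of shape $(\lambda\cup(t))/(\beta\cup(t))$ and content $\gamma$: the content is unchanged, so no relabelling should occur, and the part $t$ is inserted into $\lambda$ and into $\beta$ at generally \emph{different} row indices (the position where $t$ fits into $\beta$ is weakly above the position where it fits into $\lambda$), so the new skew diagram is not the old one with a disjoint row of length $t$ placed on top --- all rows between the two insertion points get reindexed and acquire different overhangs. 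Reading the coefficient the other way, as $c^{\lambda\cup(t)}_{\gamma,\beta\cup(t)}$ with content $\beta\cup(t)$, has the same defect: the new row of the shape and the new part of the content sit at unrelated positions unless $t\geq\lambda_1$. The required injection exists but is more delicate than a prepend-and-relabel; you should either cite \cite[Lemma~2.2]{jrs} as the paper does, or give an honest proof of it, before the rest of your argument closes.
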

\noindent
\emph{Proof of Lemma~\ref{lem:jrsidea}:}
We will only prove the first assertion; the second follows by Lemma~\ref{prop:easy}(VI).
By \cite[Lemma~2.2]{jrs},
\begin{equation}
\label{eqn:jrsidea}
c_{\sigma\cup (t),\pi}^{\kappa\cup (t)}\geq c_{\sigma,\pi}^{\kappa}.
\end{equation}
Compare
\begin{equation}
N_{\mu,\nu,\lambda}=\sum_{\alpha^{\bullet},\beta^{\bullet},\gamma^{\bullet}} c_{\alpha^{\bullet},\beta^{\bullet}}^{\mu}
c_{\alpha^{\bullet},\gamma^{\bullet}}^{\nu}c_{\beta^{\bullet},\gamma^{\bullet}}^{\lambda}
\end{equation}
with
\begin{equation}
N_{\mu\cup (t),\nu,\lambda\cup (t)}=\sum_{\alpha^{\circ},\beta^{\circ},\gamma^{\circ}} c_{\alpha^{\circ},\beta^{\circ}}^{\mu\cup (t)}
c_{\alpha^{\circ},\gamma^{\circ}}^{\nu}c_{\beta^{\circ},\gamma^{\circ}}^{\lambda\cup (t)}.
\end{equation}
Notice that if $(\alpha^{\bullet},\beta^{\bullet},\gamma^{\bullet})$ is a witness for 
$N_{\mu,\nu,\lambda}$ then by (\ref{eqn:jrsidea}), 
$(\alpha^{\circ},\beta^{\circ},\gamma^{\circ}):=(\alpha^{\bullet}, \beta^{\bullet}\cup (t), \gamma^{\bullet})$
is a witness for $N_{\mu\cup (t),\nu, \lambda\cup (t)}$, and moreover
$N_{\mu\cup(t),\nu,\lambda\cup (t)}\geq N_{\mu,\nu,\lambda}$, as desired.
\qed

Suppose $(\mu,\nu)\in {\sf Par}^2$ that do not fall into (I), (II), or (III).  We break the argument into two cases, depending on whether either of $\mu$ or $\nu$ is a rectangle.

\noindent \emph{Case 1: (One of $\mu$ or $\nu$ is not a rectangle)}  Say that $\nu$ is not a rectangle.  Since $\mu$ is not a single box, it has at least two rows or at least two columns.  In view of Lemma~\ref{prop:easy}(VI), we may assume without loss of generality that $\mu$ has at least two columns. We first establish:

\begin{claim}
\label{claim:onerow}
For $\nu$ not a rectangle and $k\geq 2$, $N_{(k),\nu,\nu+(k-2)} \geq 2$.
\end{claim}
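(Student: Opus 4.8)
The plan is to prove Claim~\ref{claim:onerow} by exhibiting two distinct witnesses $(\alpha,\beta,\gamma)$ for $N_{(k),\nu,\nu+(k-2)}>0$. Recall from Proposition~\ref{thm:Pieri} (specifically (\ref{eqn:jdjdj})) that, writing $\mu=(k)$, $\lambda=\nu+(k-2)$, we have $|\alpha|=\frac{|\mu|+|\lambda|-|\nu|}{2}$; here $|\mu|=k$ and $|\lambda|-|\nu|=k-2$, so $|\alpha|=k-1$, and symmetrically $|\gamma|=|\lambda|-|\mu|+|\alpha|\cdots$ works out so that the relevant witnesses have $\alpha=(k-1)$ and $\gamma=(k-1)$, both forced to be single rows since $\alpha,\gamma\subseteq\mu=(k)$. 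Thus, as in (\ref{eqn:jdjdj}), $N_{(k),\nu,\nu+(k-2)}=\sum_\beta c_{(k-1),\beta}^{\nu}\,c_{\beta,(k-1)}^{\nu+(k-2)}$, a sum of products of $0/1$ Pieri coefficients. So it suffices to find two distinct $\beta$'s, each obtained from $\nu$ by deleting a horizontal strip of size $1$ (i.e.\ removing one corner box, with $\beta\subseteq\nu$, $|\nu/\beta|=1$), and such that $\nu+(k-2)$ is obtained from $\beta$ by adding a horizontal strip of size $k-1$.

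The key idea is to use that $\nu$ is \emph{not} a rectangle, which guarantees $\nu$ has at least two distinct corner boxes — more precisely, at least two rows $r<s$ with $\nu_{r-1}>\nu_r$ (or $r=1$) and $\nu_r>\nu_{r+1}$, i.e.\ two ``removable'' rows. I would take $\beta^{(1)}$ to be $\nu$ with one box removed from row $1$ (the top row is always removable), giving $\beta^{(1)}=(\nu_1-1,\nu_2,\nu_3,\ldots)$, and $\beta^{(2)}$ to be $\nu$ with one box removed from some lower removable row $s\geq 2$, giving $\beta^{(2)}=(\nu_1,\ldots,\nu_{s-1},\nu_s-1,\nu_{s+1},\ldots)$. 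These are genuinely different partitions precisely because $\nu$ is not a rectangle (if $\nu$ were a rectangle the only removable box would be in the bottom-right). Then I must check, for each $i\in\{1,2\}$, that $(\nu+(k-2))/\beta^{(i)}$ is a horizontal strip of size $k-1$. The size is $|\nu+(k-2)|-|\beta^{(i)}|=(|\nu|+k-2)-(|\nu|-1)=k-1$, automatically. For the horizontal-strip condition one needs $\beta^{(i)}_j\geq (\nu+(k-2))_{j+1}=\nu_{j+1}$ for all $j\geq 1$ (the ``no two added boxes in a column'' condition for adding $(\nu+(k-2))/\beta^{(i)}$): since $(\nu+(k-2))_1=\nu_1+k-2$ and $(\nu+(k-2))_j=\nu_j$ for $j\geq 2$, this reduces to checking $\beta^{(i)}_1\geq \nu_2$ and $\beta^{(i)}_j\geq\nu_{j+1}$ for $j\geq 2$, both of which follow from $\beta^{(i)}\subseteq\nu$ together with the fact that we only removed a box from a \emph{removable} row (so $\beta^{(i)}_j\geq\nu_{j+1}$ is preserved — removing from row $1$ still leaves $\nu_1-1\geq\nu_2$ unless $\nu_1=\nu_2$, but in that case row $1$ is not removable, contradiction; removing from a lower removable row $s$ keeps $\beta^{(2)}_{s-1}=\nu_{s-1}\geq\nu_s\geq\nu_s-1$ and $\beta^{(2)}_s=\nu_s-1\geq\nu_{s+1}$ since $s$ removable means $\nu_s>\nu_{s+1}$).

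The main obstacle I anticipate is a careful case analysis of exactly which rows of $\nu$ are removable and verifying the horizontal-strip conditions in the boundary cases (e.g.\ when $\nu_1=\nu_2$, forcing the top box to be non-removable so $\beta^{(1)}$ must instead be chosen differently, or when $\ell(\nu)$ is small). One clean way to sidestep most of this: since $\nu$ is not a rectangle, there exist indices $p<q$ with $\nu_p>\nu_q$ and both $p,q$ the bottom of their ``equal-value block'' — take $\beta^{(1)}$ removing a box from the bottom of the top block and $\beta^{(2)}$ removing from the bottom of a lower block; both are then legitimate corner removals, both are distinct, and in both cases $(\nu+(k-2))/\beta^{(i)}$ is readily checked to be a horizontal strip (the added $k-1$ boxes all go in row $1$ together with the one box we removed from row $p$ or $q$, and since we removed a \emph{corner} the strip never stacks two boxes in one column, using $k-1\geq 1$). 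This gives $c_{(k-1),\beta^{(i)}}^{\nu}=c_{\beta^{(i)},(k-1)}^{\nu+(k-2)}=1$ for $i=1,2$, hence $N_{(k),\nu,\nu+(k-2)}\geq 2$, completing the claim.
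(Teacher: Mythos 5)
Your argument is correct and takes essentially the same route as the paper: both proofs exploit that a non-rectangular $\nu$ has two distinct removable corners, remove each to produce two distinct $\beta$'s, and check via the Pieri rule that each gives a witness contributing $1$ to $N_{(k),\nu,\nu+(k-2)}$. The only issue is a notational slip in your displayed formula — the correct specialization of (\ref{eqn:jdjdj}) is $N_{(k),\nu,\nu+(k-2)}=\sum_\beta c_{(1),\beta}^{\nu}\,c_{\beta,(k-1)}^{\nu+(k-2)}$ (the strip removed from $\nu$ has size $1$, not $k-1$, and the forced single-row pieces are $(1)$ and $(k-1)$, not two copies of $(k-1)$) — which is what your subsequent argument actually uses.
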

\noindent\emph{Proof of Claim~\ref{claim:onerow}:}
Since $\nu$ is not a rectangle, it has two corners, so let $\alpha=(1)$, $\beta= (k-1)$, and $\gamma$ and 
${\overline{\gamma}}$ each be $\nu$ with a different corner removed.  By (\ref{eqn:usualPieri}),
\[c^{(k)}_{(1),(k-1)}=c^\nu_{\gamma,(1)}=c^\nu_{{\overline{\gamma}},(1)}=1,\] 
and since $(\nu+(k-2))/\gamma$ and $(\nu+(k-2))/{\overline{\gamma}}$ are horizontal strips of $k-1$ boxes, 
\[c^{\nu+(k-2)}_{\gamma,(k-1)}=c^{\nu+(k-2)}_{{\overline{\gamma}},(k-1)}=1.\] 
Therefore, 
\[N_{(k),\nu,\nu+(k-2)} \geq c^{(k)}_{(1),(k-1)}c^\nu_{\gamma,(1)}c^{\nu+(k-2)}_{\gamma,(k-1)} +  c^{(k)}_{(1),(k-1)}c^\nu_{{\overline{\gamma}},(1)}c^{\nu+(k-2)}_{{\overline{\gamma}},(k-1)} = 2,\]
as asserted.
\qed

In general, consider $\mu$ and $\nu$ such that $\mu_1\geq 2$, and $\nu$ is not a rectangle.  Let
$\lambda = (\nu+(\mu_1-2))\cup (\mu_2,\mu_3,\dots)$. 
By repeated application of Lemma \ref{lem:jrsidea}, followed by Claim \ref{claim:onerow}:
\[N_{\mu,\nu,\lambda} = N_{\mu,\nu, (\nu+(\mu_1-2))\cup (\mu_2,\mu_3,\dots)} \geq N_{(\mu_1,\mu_3,\mu_4,\dots),\nu,(\nu+(\mu_1-2))\cup(\mu_3,\dots)}\geq \dots \geq  N_{(\mu_1),\nu,\nu+(\mu_1-2)}\geq 2.\]
Hence $(\mu,\nu)$ is not NL-multiplicity-free.

\noindent \emph{Case 2: ($\mu$ and $\nu$ are both rectangles with at least two rows and columns)}  We first consider the special case $\mu=(k^2)$ and $\nu=(c^d)$:
\begin{claim}
\label{claim:tworows}
For $k,c,d\geq 2$, $N_{(k^2),(c^d),((c+k-2)^2)\cup (c^{d-2})}\geq 2$.
\end{claim}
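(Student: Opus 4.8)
The strategy parallels Claim~\ref{claim:onerow} but now in the two-row setting. We want to exhibit two distinct witnesses $(\alpha,\beta,\gamma)$ for $N_{(k^2),(c^d),\lambda}$ where $\lambda=((c+k-2)^2)\cup(c^{d-2})$. The parameter count mirrors the earlier case: in $N_{(k),\nu,\nu+(k-2)}$ we peeled a single box off $\mu=(k)$, attached it in two ways to two distinct corners of $\nu$, and reassembled via Pieri. Here $\mu=(k^2)$ has content $|\alpha|+|\beta|=2k$, and in $\lambda$ the top two rows have grown by $k-2$ each, so the "grafted" piece $\gamma$ should be $\nu=(c^d)$ with its single corner box removed and then... but $\nu=(c^d)$ is a rectangle with exactly one removable corner, so unlike the Claim~\ref{claim:onerow} argument the two witnesses cannot come from two corners of $\nu$. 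Instead the two witnesses should come from the two decompositions $\alpha=(1,1),\beta=(k-1,k-1)$ versus $\alpha=(2),\beta=(k-2)$ (in the spirit of the Fulton-conjecture counterexample $N_{(2,2),(2,2),(2,2)}=2$ displayed earlier).

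Concretely, first I would take $\gamma=(c^{d-1},c-1)$, i.e. $\nu$ with one corner box removed, so that $c^{\nu}_{\gamma,(1)}=1$ and also $c^{\nu}_{\gamma,(1,1)}$ needs checking — actually one wants $\alpha$ to satisfy $c^{\nu}_{\gamma,\alpha}>0$ for the relevant $\alpha$. Let me set this up so $|\alpha|=2$: we need $c^{(c^d)}_{\gamma,\alpha}>0$ with $|\gamma|=cd-2$. Take $\gamma=(c^{d-2},c-1,c-1)$ (removing a vertical domino from the last column) — then $c^{(c^d)}_{\gamma,(1,1)}=1$ by Pieri (column version), while $c^{(c^d)}_{(c^{d-1},c-2),(2)}=1$ and $c^{(c^d)}_{(c^{d-1},c-2),(1,1)}=0$. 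This suggests using two different $\gamma$'s: $\gamma^{(1)}=(c^{d-2},c-1,c-1)$ paired with $\alpha^{(1)}=(1,1)$, $\beta^{(1)}=(k-1,k-1)$; and $\gamma^{(2)}=(c^{d-1},c-2)$ paired with $\alpha^{(2)}=(2)$, $\beta^{(2)}=(k-2)$. Then I must verify $c^{(k^2)}_{\alpha^{(i)},\beta^{(i)}}=1$ (immediate from the LR rule), $c^{(c^d)}_{\alpha^{(i)},\gamma^{(i)}}=1$ (Pieri), and $c^{\lambda}_{\beta^{(i)},\gamma^{(i)}}=1$, where $\lambda=((c+k-2)^2)\cup(c^{d-2})$. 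The last coefficient is the crux: for $i=1$, $\lambda/\gamma^{(1)}$ must be a shape admitting an LR filling of content $\beta^{(1)}=(k-1,k-1)$, and for $i=2$, $\lambda/\gamma^{(2)}$ an LR filling of content $\beta^{(2)}=(k-2)$, i.e. a horizontal strip of $k-2$ boxes in a single row — but $|\lambda|-|\gamma^{(2)}|=2(c+k-2)+(d-2)c-(cd-2)=2k-2\neq k-2$ in general, so the numerology forces a rethink of which $\beta^{(2)}$ to use. The right bookkeeping is $|\beta^{(i)}|=|\lambda|-|\gamma^{(i)}|=2k-2-|\alpha^{(i)}|+2=2k-|\alpha^{(i)}|$; consistent with $|\alpha^{(i)}|+|\beta^{(i)}|=2k$.

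So the actual pair of witnesses should both have $|\alpha|=2$ but differ in shape: $(\alpha,\beta)=((1,1),(k-1,k-1))$ versus $((2),(k-2,k-2,?))$ — no, $\beta$ for a rectangle $(k^2)$ with $\alpha=(2)$ must be $\beta=(k-2,k-2)$ only if... $c^{(k^2)}_{(2),(k-2,k-2)}$: we need an LR tableau of $(k^2)/(2)$ of content $(k-2,k-2)$, which fails since row 1 of $(k^2)/(2)$ has $k-2$ boxes forced to be $1$'s but row 2 has $k$ boxes and only $k-2$ can be $2$'s — actually $c^{(k^2)}_{(2),(k-2,k-2)}=0$, while $c^{(k^2)}_{(k-2,k-2),(2)}=0$ too; the correct complement of $(2)\subseteq(k^2)$ in the Littlewood–Richardson sense has $c^{(k^2)}_{(2),\beta}>0$ iff $\beta=(k,k-2)$ and then it is $1$. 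So witness 2 is $\alpha=(2)$, $\beta=(k,k-2)$; witness 1 is $\alpha=(1,1)$, $\beta=(k-1,k-1)$ — these $\beta$'s are genuinely different. Then I pick $\gamma$ to be the \emph{same} in both, namely $\gamma=\nu\wedge\lambda$ chosen so that both $c^{(c^d)}_{\alpha^{(i)},\gamma}$ and $c^{\lambda}_{\beta^{(i)},\gamma}$ are positive; a natural choice is $\gamma=(c^{d-2},c-1,c-1)$, which works for $\alpha=(1,1)$ but not $\alpha=(2)$, forcing instead two different $\gamma$'s. \textbf{The main obstacle} is this combinatorial matching: pinning down one explicit pair of triples $(\alpha^{(i)},\beta^{(i)},\gamma^{(i)})$, $i=1,2$, that (a) are genuinely distinct as triples, (b) each have all three LR coefficients equal to $1$, and (c) respect all the size constraints $|\alpha^{(i)}|+|\beta^{(i)}|=2k$, $|\alpha^{(i)}|+|\gamma^{(i)}|=|\nu|$, $|\beta^{(i)}|+|\gamma^{(i)}|=|\lambda|$. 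Once the two triples are correctly identified, verifying each LR coefficient is $1$ is a short Pieri/LR-rule check (using that $\mu,\nu$ are rectangles so Pieri applies on one side and the complementary-corner/rectangle rigidity applies on the other), and the conclusion $N_{(k^2),(c^d),\lambda}\geq 2$ follows from (\ref{eqn:Newell-Littlewood}) by summing the two positive contributions. I expect the final correct choice to be, up to the $\mathfrak{S}_3$-symmetry of Lemma~\ref{prop:easy}(I), a direct two-row analogue of the Claim~\ref{claim:onerow} construction, and after this special case the general rectangular case $\mu=(k^a)$, $\nu=(c^d)$ reduces to it by repeated application of Lemma~\ref{lem:jrsidea} exactly as in Case~1.
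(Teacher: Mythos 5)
After the false starts, you converge on exactly the two witnesses the paper uses, namely $(\alpha,\beta,\gamma)=\bigl((1,1),(k-1,k-1),(c^{d-2})\cup((c-1)^2)\bigr)$ and $\bigl((2),(k,k-2),(c^{d-1})\cup(c-2)\bigr)$, so this is essentially the paper's proof. The step you flag as remaining --- checking that $c^{\lambda}_{\beta,\gamma}>0$ for each triple --- is the routine forced-filling argument you describe (each skew shape $\lambda/\gamma$ admits exactly one LR filling of content $\beta$), and for the bound $N\geq 2$ positivity of the coefficients suffices; the triples need not each contribute exactly $1$.
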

\noindent\emph{Proof of Claim~\ref{claim:tworows}:}
Let $\alpha=(1,1), \beta=(k-1,k-1), \gamma = (c^{d-2})\cup ((c-1)^2)$. 
By the Littlewood-Richardson rule, 
\[c_{\alpha, \beta}^{(k^2)}=c_{\alpha, \gamma}^\nu= c_{\beta, \gamma}^{((c+k-2)^2)\cup (c^{d-2})}= 1.\]  
Similarly, letting ${\overline{\alpha}}=(2)$, ${\overline{\beta}}=(k,k-2)$, ${\overline{\gamma}} = (c^{d-1})\cup (c-2)$, we obtain
\[c_{{\overline{\alpha}}, {\overline{\beta}}}^{(k^2)}=c_{{\overline{\alpha}}, {\overline{\gamma}}}^\nu= c_{{\overline{\beta}}, {\overline{\gamma}}}^{((c+k-2)^2)\cup (c^{d-2})}= 1.\]  
Therefore,  \[N_{(k^2),(c^d),((c+k-2)^2)\cup (c^{d-2})} \geq c_{\alpha, \beta}^{(k^2)}c_{\alpha, \gamma}^\nu c_{\beta, \gamma}^{((c+k-2)^2)\cup (c^{d-2})} + c_{{\overline{\alpha}}, {\overline{\beta}}}^{(k^2)}c_{{\overline{\alpha}}, 
\overline{\gamma}}^\nu c_{\overline{\beta}, \overline{\gamma}}^{((c+k-2)^2)\cup (c^{d-2})} = 2,\] 
as needed.
\qed

Consider arbitrary rectangles $\mu=(k^p)$ and $\nu=(c^d)$ that both contain at least two rows and columns; hence $k,p,c,d\geq 2$.  Let
$\lambda = ((c+k-2)^2)\cup (k^{p-2})\cup (c^{d-2})$. 
By repeatedly applying Lemma \ref{lem:jrsidea}, followed by Claim \ref{claim:tworows}:
\begin{multline}\nonumber
N_{\mu,\nu,\lambda} = N_{(k^p),(c^d),((c+k-2)^2)\cup (k^{p-2})\cup (c^{d-2})} \geq N_{(k^{p-1}),(c^d),((c+k-2)^2)\cup (k^{p-3})\cup (c^{d-2})}\\\geq \dots \geq N_{(k^2),(c^d),((c+k-2)^2)\cup (c^{d-2})}\geq 2.
\end{multline}
Hence $(\mu,\nu)$ is not NL-multiplicity-free in this case, either.

These two cases cover all possibilities for $\mu$ and $\nu$ not satisfying (I), (II), or (III).
In both cases we established multiplicity.  
\end{proof}

\section{Final remarks}\label{sec:final}
\subsection{The associativity relation}
Since $N_{\mu,\nu,\lambda}$ are the structure constants for the Koike-Terada basis
of $\Lambda$, the associativity relation
\[({s}_{[\mu]}   {s}_{[\nu]} )  {s}_{[\lambda]} ={s}_{[\mu]}   ({s}_{[\nu]}   {s}_{[\lambda]} ),\] 
implies for any $\mu,\nu,\lambda,\tau\in {\sf Par}$ that:
\begin{equation}
\label{eqn:assoc}
\sum_{\theta} N_{\mu,\nu,\theta}N_{\theta,\lambda,\tau}=\sum_{\theta} N_{\nu,\lambda,\theta}N_{\mu,\theta,\tau}.
\end{equation}

\begin{problem}
Give a bijective proof of (\ref{eqn:assoc}) using the definition (\ref{eqn:Newell-Littlewood}).
\end{problem}

Now, $c_{\mu,\nu}^{\lambda}$ also ``associative'' in that it satisfies a relation of the form (\ref{eqn:assoc}).
However, (\ref{eqn:assoc}) does not \emph{formally} follow  from this fact. To explain,
we considered other associative 
structure coefficients $w_{\mu,\nu}^{\lambda}$ studied in algebraic combinatorics. For each of these one can define a ``Newell-Littlewood'' analogue:
\[O_{\mu,\nu,\lambda}:=\sum_{\alpha,\beta,\gamma} w_{\alpha,\beta}^{\mu}
w_{\alpha,\gamma}^{\nu}w_{\beta,\gamma}^{\lambda}.\]
Specifically, we looked at the
$K$-theoretic Littlewood-Richardson coefficients for Grassmannians, 
the shifted Littlewood-Richardson coefficients for multiplication of Schur $P-$ or Schur $Q-$ functions, and the structure coefficients for Schubert polynomials (here we replace
partitions with permutations). Small examples show $O_{\mu,\nu,\lambda}$ is not associative. 
Under what conditions/natural examples is $O_{\mu,\nu,\lambda}$ associative?

\subsection{An analogue of M.~Kleber's conjecture} Fix a rectangle $a\times b$ and consider all products 
$s_{\lambda}s_{\lambda^{\vee}}$ where $\lambda\subseteq a\times b$ and $\lambda^{\vee}$ is the 180-degree rotation of $(a\times b)\setminus \lambda$. M.~Kleber \cite[Section~3]{Kleber} conjectured that these products, ranging over unordered pairs
$(\lambda,\lambda^{\vee})$ are linearly independent in $\Lambda$. 

\begin{problem}\label{prob:kleber}
Are the products ${s}_{[\lambda]}  {s}_{[\lambda^{\vee}]}$, indexed over unordered pairs of partitions 
$(\lambda,\lambda^\vee)$ contained in $a\times b$, linearly independent in $\Lambda$? 
\end{problem}

By Lemma~\ref{prop:easy}(II), M.~Kleber's conjecture implies an affirmative 
answer to Problem~\ref{prob:kleber}.
However, the extra terms in ${s}_{[\lambda]}  {s}_{[\lambda^{\vee}]}$ versus $s_{\lambda}s_{\lambda^{\vee}}$ might make Problem~\ref{prob:kleber} more tractable. (The interested reader can test ideas for 
$a=b=2$ using the data in the Appendix.)

\subsection{Version of T.~Lam-A.~Postnikov-P.~Pylyavskyy's theorems}
We give another implication of Proposition~\ref{prop:reformulate}.
This concerns results of T.~Lam-A.~Postnikov-P.~Pylyavskyy \cite{AJM}. Their paper solves (and generalizes) conjectures of A.~Okounkov \cite{Okounkov} and S.~Fomin-W.~Fulton-C.-K.~Li-T.-Y.~Poon \cite{FominPoon}.
It builds on work of B.~Rhoades-M.~Skandera \cite{RS1, RS2}.

If $\alpha,\beta\in {\sf Par}$ then $\alpha\vee\beta\in {\sf Par}$ has parts
$\max(\alpha_i,\beta_i)$ (where we have adjoined $0$'s to $\alpha$ or $\beta$
as necessary). For any two skew shapes $\nu/\alpha$ and $\mu/\beta$, define 
\[(\nu/\alpha)\wedge(\mu/\beta):=(\nu \wedge \mu) /(\alpha \wedge \beta) \text{ \ 
and $(\nu/\alpha)\vee (\mu/\beta) := (\nu \vee \mu) / (\alpha \vee \beta)$.}\] 
Let
\[{\rm sort}_{1}(\nu,\mu) := (\rho_{1},\rho_{3},\rho_{5},\ldots)\text{\ and ${\rm sort}_{2}(\nu,\mu) := (\rho_{2},\rho_{4},\rho_{6},\ldots)$},\]
where $(\rho_{1},\rho_{2},\rho_{3},\ldots):=\nu\cup \mu$. Below, $\frac{\nu+\mu}{2}$ means coordinate-wise
addition and division. Also $\lfloor \cdot \rfloor$ and $\lceil \cdot \rceil$ are taken coordinate-wise.

If $f\in \Lambda$ then $f$ is said to be \emph{Schur nonnegative} if $f=\sum_{\lambda} a_{\lambda} s_{\lambda}$
with $a_{\lambda}\geq 0$ for all $\lambda \in {\sf Par}$.

\begin{theorem}[\cite{AJM}]\label{thm:sschurmj}
Let $\nu/\alpha$ and $\mu/\beta$ be skew shapes. The following are Schur nonnegative:
\begin{enumerate}
    \item $s_{(\nu/\alpha)\wedge(\mu/\beta)}s_{(\nu/\alpha)\vee (\mu/\beta)} - s_{\nu/\alpha}s_{\mu/\beta}$
    \item $s_{\lfloor \frac{\nu + \mu}{2} \rfloor / \lfloor \frac{\alpha + \beta}{2}\rfloor}s_{\lceil \frac{\nu + \mu}{2} \rceil / \lceil \frac{\alpha + \beta}{2}\rceil} - s_{\nu/\alpha}s_{\mu/\beta}$
    \item $s_{\textup{sort}_{1}(\nu,\mu)/\textup{sort}_{1}(\alpha,\beta)}s_{\textup{sort}_{2}(\nu,\mu)/\textup{sort}_{2}(\alpha,\beta)} - s_{\nu/\alpha}s_{\mu/\beta}$
\end{enumerate}
\end{theorem}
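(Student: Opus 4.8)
The plan is to establish all three parts from a common reduction, following the strategy of \cite{AJM}. First I would use the classical identity $s_{\nu/\alpha}\,s_{\mu/\beta}=s_{\Theta}$, valid whenever $\Theta$ is a disconnected skew shape whose two components are translates of $\nu/\alpha$ and $\mu/\beta$ sharing no row or column: a semistandard filling of such a $\Theta$ is precisely an independent pair of fillings of the components, and the resulting symmetric function is independent of the relative placement. Applying this on both sides, each of (1)--(3) takes the form ``$s_{\Theta^{+}}-s_{\Theta^{-}}$ is Schur nonnegative,'' where $\Theta^{\pm}$ are two-component disconnected skew shapes and $\Theta^{+}$ is obtained from $\Theta^{-}$ by one of three ``balancing'' operations on the pair of components (row-wise meet/join, rounding the row-wise average, or interleaving the sorted merge of the row data). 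So it suffices to prove that each balancing operation is monotone with respect to Schur order.

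To prove this I would pass to the Littlewood--Richardson formulation: by the (skew) Littlewood--Richardson rule, $[s_{\lambda}]\,s_{\Theta}$ counts ballot semistandard fillings of $\Theta$ of content $\lambda$, so the goal becomes, for each partition $\lambda$, an injection from the ballot semistandard fillings of $\Theta^{-}$ of content $\lambda$ into those of $\Theta^{+}$ of content $\lambda$. Each balancing operation is a composition of ``elementary'' moves -- a jeu-de-taquin-type slide of a single cell across the gap between the two components, or the transposition of the data of a matched pair of rows that ``uncrosses'' the components -- so by telescoping it is enough to produce such an injection for one elementary move, which one builds by performing the slide on the underlying cells and then applying a local correction restoring the ballot condition, reversibly. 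Alternatively -- and this is the route actually carried out in \cite{AJM}, building on B.~Rhoades--M.~Skandera \cite{RS1, RS2} -- one realizes each skew Schur function $s_{\nu/\alpha}$ as a specialization of a Temperley--Lieb immanant of a generic totally nonnegative matrix; the three differences then become inequalities between products of minors attached to planar networks, and such differences expand with nonnegative coefficients in the Temperley--Lieb basis, which in turn forces Schur nonnegativity. This uniform argument simultaneously yields the conjectures of A.~Okounkov \cite{Okounkov} and of S.~Fomin--W.~Fulton--C.-K.~Li--Y.-T.~Poon \cite{FominPoon}.

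The main obstacle is the bookkeeping for the combinatorial injection: one must check that the ``slide-and-correct'' procedure always returns a genuine ballot tableau and is injective on its image, uniformly across the (genuinely different) elementary moves demanded by the three operations. I would isolate the worst case -- a slide across the gap when rows just above or below force cascading ballot violations -- and dispatch it with an explicit local straightening algorithm, verifying reversibility step by step; the averaging case (2) needs extra care because, unlike (1) and (3), the two shapes $\Theta^{+}$ and $\Theta^{-}$ need not even have the same monomial expansion, so the injection must genuinely rearrange values rather than merely relabel cells. Since \cite{AJM} already supplies the clean Temperley--Lieb proof, in the paper I would simply cite Theorem~\ref{thm:sschurmj} from \cite{AJM} and use it as a black box; its relevance here is that, via Proposition~\ref{prop:reformulate}, it is precisely the input needed for the Newell--Littlewood generalization in Theorem~\ref{thm:meetjoin}.
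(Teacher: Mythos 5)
The paper does not prove Theorem~\ref{thm:sschurmj} at all: it is stated with the attribution [\cite{AJM}] and used as a black box, exactly as you conclude at the end of your proposal. Your sketch of how the proof goes in \cite{AJM} (via Temperley--Lieb immanants, building on \cite{RS1, RS2}) is a fair description of that paper's method, but for the purposes of the present paper the only step needed is the citation, which matches what the authors do.
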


Define $f\in \Lambda$ to be \emph{Koike-Terada nonnegative} if $f=\sum_{\lambda} b_{\lambda}s_{[\lambda]}$
has $b_{\lambda}\geq 0$ for every $\lambda \in {\sf Par}$.

\begin{theorem}
\label{thm:meetjoin}
The following are  Koike-Terada nonnegative:
\begin{enumerate}
    \item ${s}_{[\nu\wedge \mu]}  
    {s}_{[\nu \vee\mu]} -{s}_{[\nu]}   {s}_{[\mu]} $
    \item ${s}_{[\lfloor \frac{\nu + \mu}{2} \rfloor]}    {s}_{[\lceil \frac{\nu + \mu}{2} \rceil]} -{s}_{[\nu]}  {s}_{[\mu]} $
    \item ${s}_{[\textup{sort}_{1}(\nu,\mu)]}    {s}_{[\textup{sort}_{2}(\nu,\mu)]}  - {s}_{[\nu]}   {s}_{[\mu]} $
\end{enumerate}
\end{theorem}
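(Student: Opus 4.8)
The plan is to derive Theorem~\ref{thm:meetjoin} from the ``asymmetric'' formula of Proposition~\ref{prop:reformulate} together with the skew-Schur results of \cite{AJM} recorded as Theorem~\ref{thm:sschurmj}. Throughout, write $(\sigma,\pi)$ for whichever of the three pairs
\[(\nu\wedge\mu,\ \nu\vee\mu),\qquad \left(\left\lfloor\frac{\nu+\mu}{2}\right\rfloor,\ \left\lceil\frac{\nu+\mu}{2}\right\rceil\right),\qquad (\textup{sort}_1(\nu,\mu),\ \textup{sort}_2(\nu,\mu))\]
is under consideration. In each case $\sigma$ and $\pi$ are genuine partitions, and $\nu\wedge\mu\subseteq\sigma$ and $\nu\wedge\mu\subseteq\pi$: for the first pair since $\nu\wedge\mu\subseteq\nu\vee\mu$; for the second since $\min(\nu_i,\mu_i)$ is an integer that is at most $\frac{\nu_i+\mu_i}{2}$; for the third since among the parts of $\nu$ and $\mu$ at least $2i$ of them are $\ge\min(\nu_i,\mu_i)$, so the $(2i)$-th largest part of $\nu\cup\mu$ is $\ge\min(\nu_i,\mu_i)$. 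Since $\{s_{[\lambda]}\}_{\lambda\in{\sf Par}}$ is a basis of $\Lambda$, equation~(\ref{eqn:polyver}) shows that Koike-Terada nonnegativity of the three displayed differences amounts to $N_{\sigma,\pi,\lambda}\ge N_{\nu,\mu,\lambda}$ for every $\lambda\in{\sf Par}$, where we also use $N_{\mu,\nu,\lambda}=N_{\nu,\mu,\lambda}$ from Lemma~\ref{prop:easy}(I).

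First I would apply Proposition~\ref{prop:reformulate} and use that $s_{\theta/\alpha}=0$ unless $\alpha\subseteq\theta$ to extend both sums to range over all $\alpha\in{\sf Par}$, obtaining
\begin{equation}
\nonumber
N_{\sigma,\pi,\lambda}-N_{\nu,\mu,\lambda}=\sum_{\alpha\in{\sf Par}}\bigl\langle\, s_{\sigma/\alpha}\,s_{\pi/\alpha}-s_{\nu/\alpha}\,s_{\mu/\alpha},\ s_{\lambda}\,\bigr\rangle .
\end{equation}
So it is enough to show each bracketed element is Schur nonnegative. The key point is that each of the three ``averaging'' operations fixes the diagonal pair: $\alpha\wedge\alpha=\alpha\vee\alpha=\alpha$, $\lfloor\frac{\alpha+\alpha}{2}\rfloor=\lceil\frac{\alpha+\alpha}{2}\rceil=\alpha$, and $\textup{sort}_1(\alpha,\alpha)=\textup{sort}_2(\alpha,\alpha)=\alpha$.

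Now fix $\alpha$ and split into two cases. If $\alpha\subseteq\nu\wedge\mu$, then $\alpha$ is contained in each of $\nu,\mu,\sigma,\pi$, so $\nu/\alpha$, $\mu/\alpha$, $\sigma/\alpha$, $\pi/\alpha$ are honest skew shapes; applying the appropriate clause of Theorem~\ref{thm:sschurmj} to the skew shapes $\nu/\alpha$ and $\mu/\alpha$ (that is, with its second denominator ``$\beta$'' taken equal to $\alpha$) and using the diagonal identities above identifies the resulting Schur-nonnegative expression with precisely $s_{\sigma/\alpha}s_{\pi/\alpha}-s_{\nu/\alpha}s_{\mu/\alpha}$. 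If instead $\alpha\not\subseteq\nu\wedge\mu$, then $\alpha\not\subseteq\nu$ or $\alpha\not\subseteq\mu$, hence $s_{\nu/\alpha}s_{\mu/\alpha}=0$, while $s_{\sigma/\alpha}s_{\pi/\alpha}$ is a product of (skew) Schur functions and therefore Schur nonnegative. Either way the bracketed element has nonnegative coefficient of $s_{\lambda}$; summing over $\alpha$ gives $N_{\sigma,\pi,\lambda}\ge N_{\nu,\mu,\lambda}$, completing the proof. This argument is essentially a bookkeeping reduction to \cite{AJM}; the only steps needing care are the verification of the inclusions $\nu\wedge\mu\subseteq\sigma,\pi$ (so that Theorem~\ref{thm:sschurmj} is applied to honest skew shapes) and the reconciliation of the two ranges of summation produced by Proposition~\ref{prop:reformulate}, which I handle above by extending both sums to all partitions and invoking the vanishing of skew Schur functions outside their natural range. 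I do not expect any substantive obstacle beyond this.
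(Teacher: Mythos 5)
Your proposal is correct and follows essentially the same route as the paper: apply Proposition~\ref{prop:reformulate}, compare the two sums term-by-term via Theorem~\ref{thm:sschurmj} with $\beta=\alpha$ (using that each averaging operation fixes the diagonal pair), and reassemble. The paper's own proof is a terse three-line computation for part (1) only; your additional bookkeeping (extending the summation range, checking $\nu\wedge\mu\subseteq\sigma,\pi$, and handling $\alpha\not\subseteq\nu\wedge\mu$) just makes explicit what the paper leaves implicit.
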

\begin{proof}
We only prove the first statement; the others are similar. Fix any $\lambda$. Then
\begin{align*}
N_{\mu,\nu,\lambda}& = [s_{\lambda}] \sum_{\alpha} s_{\mu/\alpha}s_{\nu/\alpha} & \text{(Proposition~\ref{prop:reformulate})}\\
\ & \leq [s_{\lambda}] \sum_{\alpha} s_{\mu\wedge \nu/\alpha}s_{\mu\vee \nu/\alpha} &
\text{(Theorem~\ref{thm:sschurmj}(1))}\\
& = N_{\mu\wedge \nu,\mu\vee\nu,\lambda} & \text{(Proposition~\ref{prop:reformulate})}
\end{align*}
and the result follows.
\end{proof}

\begin{example}
Let $\mu=(2), \nu=(1,1)$. Then
\[{s}_{[\mu]}   {s}_{[\nu]} =
{s}_{[2]}   {s}_{[1,1]} =
{s}_{[1, 1]}  + {s}_{[2]}  + {s}_{[2, 1, 1]}  + 
{s}_{[3, 1]} ,\]
and
\[
{s}_{[\mu\wedge \nu]}   {s}_{[\mu\vee\nu]} =
{s}_{[1]}   {s}_{[2,1]} =
{s}_{[1,1]}  + {s}_{[2]} 
 + {s}_{[2, 1, 1]}  + 
{s}_{[3, 1]} +{s}_{[2,2]}.\]
Hence 
\[{s}_{[\mu\wedge \nu]}   {s}_{[\mu\vee\nu]} -
{s}_{[\mu]}   {s}_{[\nu]} ={s}_{[2,2]},\] 
which
is ${s} $-positive, as asserted by Theorem~\ref{thm:meetjoin}(1). The reader can verify that, in this
case, 
\[s_{[\mu\wedge\nu]}s_{[\mu\vee \nu]}={s}_{[\lfloor \frac{\nu + \mu}{2} \rfloor]}    {s}_{[\lceil \frac{\nu + \mu}{2} \rceil]}={s}_{[\textup{sort}_{1}(\nu,\mu)]}    {s}_{[\textup{sort}_{2}(\nu,\mu)]}.\] 
Therefore the above also agrees with
parts (2) and (3) of Theorem~\ref{thm:meetjoin}, as well. \qed
\end{example}

\section*{Acknowledgements}
We thank Frank Calegari, Joshua Kiers and William Linz for helpful conversations. Steven Sam's 2010 blog post
on the numbers offered a useful resource when starting this project.
We thank Richard Rimanyi for crucial computer assistance. We used Anders Buch's Littlewood-Richardson calculator in our experiments. AY was partially supported by a Simons Collaboration grant and funding from UIUC's Campus Research Board.

\appendix
\section{A list of products $s_{[\mu]}s_{[\nu]}$}
We compute (\ref{eqn:polyver})
for $\emptyset\neq\mu,\nu\subseteq 2\times 2$.
\[{s}_{[1]}^{2}={s}_{[0]}  + {s}_{[1, 1]}  + {s}_{[2]} \]
\[{s}_{[1]}   {s}_{[2]} ={s}_{[1]}  + {s}_{[2, 1]}  + {s}_{[3]} \]
\[{s}_{[1]}   {s}_{[1,1]} ={s}_{[1]}  + {s}_{[1,1,1]}  + {s}_{[2,1]} \]
\[{s}_{[1]}   {s}_{[2,1]} ={s}_{[1,1]}  + {s}_{[2]}  + {s}_{[2,1,1]} + {s}_{[2,2]} + {s}_{[3,1]} \]
\[{s}_{[1]}   {s}_{[2,2]} ={s}_{[2,1]}  + {s}_{[2,2,1]}  + {s}_{[3,2]} \]
\[{s}_{[2]} ^{  2}={s}_{[0]}  + {s}_{[1,1]}  + {s}_{[2]} + {s}_{[2,2]} + {s}_{[3,1]} + {s}_{[4]} \]
\[{s}_{[2]}   {s}_{[1,1]} ={s}_{[1,1]}  + {s}_{[2]}  + {s}_{[2,1,1]} + {s}_{[3,1]} \]
\[{s}_{[2]}   {s}_{[2,1]} ={s}_{[1]}  + {s}_{[1,1,1]}  + 2{s}_{[2,1]}+ {s}_{[3]} + {s}_{[2,2,1]} + {s}_{[3,1,1]} + {s}_{[3,2]} + {s}_{[4,1]}\]
\[{s}_{[2]}   {s}_{[2,2]} ={s}_{[2]}  + {s}_{[2,1,1]}  + {s}_{[2,2]} + {s}_{[3,1]} + {s}_{[2,2,2]} + {s}_{[3,2,1]} 
+ {s}_{[4,2]}\]
\[{s}_{[1,1]} ^{  2}={s}_{[0]}  + {s}_{[1,1]}  + {s}_{[2]} + {s}_{[1,1,1,1]} 
 + {s}_{[2,1,1]} + {s}_{[2,2]} \]
\[
{s}_{[1,1]}   {s}_{[2,1]} ={s}_{[1]}  + 2{s}_{[2,1]}  + {s}_{[3]} + {s}_{[2,1,1,1]} + {s}_{[2,2,1]} + {s}_{[3,1,1]} + {s}_{[3,2]} \]
\[{s}_{[1,1]}   {s}_{[2,2]} ={s}_{[1,1]}  + {s}_{[2,1,1]}  + {s}_{[2,2]} + {s}_{[3,1]} + {s}_{[2,2,1,1]} + {s}_{[3,2,1]} + {s}_{[3,3]} \]
\begin{multline}\nonumber
{s}_{[2,1]} ^{  2}={s}_{[0]}  + 2{s}_{[1,1]} + 2{s}_{[2]}+ {s}_{[1,1,1,1]} + 3{s}_{[2,1,1]}
+ 2{s}_{[2,2]}+ 3{s}_{[3,1]} 
+ {s}_{[4]}  + {s}_{[2,2,1,1]} + {s}_{[2,2,2]} + {s}_{[3,1,1,1]} \\ \nonumber
+ 2{s}_{[3,2,1]} + {s}_{[3,3]} 
+ {s}_{[4,1,1]} + {s}_{[4,2]} 
\end{multline}
\begin{multline}\nonumber
{s}_{[2,1]}   {s}_{[2,2]} ={s}_{[1]}  + {s}_{[1,1,1]}  + 2{s}_{[2,1]} + {s}_{[3]} + {s}_{[2,1,1,1]} 
+ 2{s}_{[2,2,1]} + 2{s}_{[3,1,1]} 
+ 2{s}_{[3,2]}  + {s}_{[4,1]} + 
{s}_{[2,2,2,1]} + {s}_{[3,2,1,1]} \\ \nonumber
+ {s}_{[3,2,2]} + {s}_{[3,3,1]} 
+ {s}_{[4,2,1]} + {s}_{[4,3]} 
\end{multline}
\begin{multline}\nonumber
{s}_{[2,2]} ^{  2}={s}_{[0]}  + {s}_{[1,1]}  + {s}_{[2]} + {s}_{[1,1,1,1]} 
+ {s}_{[2,1,1]} 
+ 2{s}_{[2,2]} 
+ {s}_{[3,1]} 
+ {s}_{[4]}  + {s}_{[2,2,1,1]} + {s}_{[2,2,2]} + {s}_{[3,1,1,1]} 
+ 2{s}_{[3,2,1]}\\ \nonumber
+ {s}_{[3,3]} 
+ {s}_{[4,1,1]} + {s}_{[4,2]} 
+ {s}_{[2,2,2,2]} + {s}_{[3,2,2,1]} 
+ {s}_{[3,3,1,1]} + {s}_{[4,2,2]} 
+ {s}_{[4,3,1]} + {s}_{[4,4]} 
\end{multline}

The computation ${s}_{[2]} {s}_{[2,2]} $
matches the multiplication $(2,2)_{\sf Sp}\times (2)_{\sf Sp}$ in 
\cite[pg. 509]{Koike}. This calculation is coincides with the tensor products in ${\sf Sp}_{2n}$ for any $n\geq 3$. However, when $n=2$,
as shown in \emph{loc.~cit.} the expansion differs from the one above (and from each other, among the classical groups).
\end{document}